\newcolumntype{P}[1]{>{\centering\arraybackslash}m{#1}}
\def\?[#1]{\textbf{[#1]}\marginpar{\Large{\textbf{??}}}}
\let\epsilon=\varepsilon 
\newtheorem{theo}{Theorem}
\newtheorem*{theo*}{Theorem}
\newtheorem{prop}{Proposition}[section]
\newtheorem*{prop*}{Proposition}	
\newtheorem{defi}[prop]{Definition}
\newtheorem*{defi*}{Definition}
\newtheorem{assumption}{Assumption}
\newtheorem{lemm}[prop]{Lemma}
\newtheorem*{lemm*}{Lemma}
\numberwithin{equation}{section}
\newenvironment{manualcorr}[1]{%
  \manualcorrreminner
}{\endmanualcorrreminner}
\DeclareMathOperator{\tr}{tr}
\newcommand\reallywidehat[1]{\arraycolsep=0pt\relax%
\begin{array}{c}
\stretchto{
  \scaleto{
    \scalerel*[\widthof{\ensuremath{#1}}]{\kern-.5pt\bigwedge\kern-.5pt}
    {\rule[-\textheight/2]{1ex}{\textheight}} 
  }{\textheight} %
}{0.5ex}\\           
#1\\                 
\rule{-1ex}{0ex}
\end{array}
}
\author{Tristan Humbert}
\email{humbertt@imj-prg.fr}
\address{Sorbonne Université, Paris France 75005.}
\begin{document}
\begin{abstract}
For a minimal Anosov $\mathbb R^{\kappa}$-action on a closed manifold, we study the measure of maximal entropy constructed by Carrasco and Rodriguez-Hertz in \cite{CarHer} and show that it fits into the theory of Ruelle-Taylor resonances introduced by Guedes Bonthonneau, Guillarmou, Hilgert, and Weich in \cite{GBGHW}. More precisely, we show that the topological entropy corresponds to the first Ruelle-Taylor resonance for the action on a certain bundle of forms and that the measure of maximal entropy can be retrieved as the distributional product of the corresponding resonant and co-resonant states. As a consequence, we prove a Bowen-type formula for the measure of maximal entropy and a counting result on the number of periodic torii.
\end{abstract}
\title{Measure of maximal entropy for minimal Anosov actions} 
\maketitle
\section{Introduction}
\subsection{Anosov actions}
Let $\mathcal M$ be a smooth closed (i.e compact and boundaryless) manifold equipped with a smooth Riemannian metric $g$. Consider  $\tau: \mathbb A\cong \mathbb R^{\kappa}\to \mathrm{Diffeo}^{\infty}(\mathcal M)$ a locally free action of an Abelian Lie group $\mathbb A$ of dimension $\kappa\geq 1$. Denote by $\mathfrak a:=\mathrm{Lie}(\mathbb A)\cong \mathbb R^{\kappa}$ its Lie algebra and define the \emph{infinitesimal action} by
\begin{equation}
\label{eq:XLie}
X:\begin{cases}
\mathfrak a \to C^{\infty}(\mathcal M;T\mathcal M)\\
A \mapsto X_A:=\frac d{dt}|_{t=0}\tau(\exp(At)),
\end{cases}
\end{equation}
where we write $\exp$ for the exponential map. We denote by $\varphi_t^A:=\tau(\exp(At))$ the flow at time $t\in \mathbb R$ corresponding to $A\in \mathfrak a$.
Since $\mathfrak a$ is Abelian, $\mathrm{Ran}(X)\subset C^{\infty}(\mathcal M;T\mathcal M)$ is a $\kappa$-dimensional subspace of commuting vector fields which spans a $\kappa$-dimensional subbundle $E_0\subset T\mathcal M$ which is called the \emph{central} or \emph{neutral} direction.
\begin{defi}
\label{Anosov}
An element $A\in \mathfrak a$ $($or equivalently $X_A)$ is \emph{transversely hyperbolic} if there is a continuous splitting of the tangent bundle
\begin{equation}
\label{eq:split}
T\mathcal M=E_s\oplus E_0\oplus E_u
\end{equation}
which is $d\varphi_t^A$-invariant $($i.e $\varphi_t^A(E_\bullet(x))=E_\bullet(\varphi_t^A x)$ for any $\bullet=s,u,0$, any $t\in  \mathbb R$  and  any $x\in \mathcal M)$ and there exist $($uniform$)$ constants $C,\nu>0$ such that
\begin{equation}
\label{eq:s}
\begin{split}
&\forall v\in E_s, \ \forall t\geq 0, \quad \|d\varphi_t^A(v)\|_g\leq Ce^{-\nu t}\|v\|_g,
\\
&\forall v\in E_u, \ \forall t\leq 0, \quad \|d\varphi_t^A(v)\|_g\leq Ce^{-\nu |t|}\|v\|_g.
\end{split}
\end{equation}
The action $\tau$ is \emph{Anosov} if there exists a transversely hyperbolic element $A\in \mathfrak a$. The distribution $E_s$ $($resp. $E_u)$ is the stable bundle $($resp. unstable bundle$)$ and its dimension will be denoted by $d_s$ $($resp. $d_u)$.
\end{defi}
\begin{defi}
The positive Weyl chamber $\mathcal W$ of $A_0$ is the set of $A\in \mathfrak a$ which are transversally hyperbolic with the same Anosov splitting. It is an open convex cone of~$\mathfrak a$.
\end{defi}
For $\kappa=1$, we recover the well-known definition of an Anosov flow.  For these flows, there are many invariant measures. A standard way of constructing an invariant measure is to consider the \emph{equilibrium state} associated to a real-valued and Hölder-continuous potential $V$ (see \cite[Theorem 4.3.13]{FishHas} for a precise definition). For the null-potential ($V=0$), we recover the \emph{measure of maximal entropy}.

For Anosov flows, powerful tools such as Markov partitions or the specification property lead to a very rich theory of {equilibrium states} see \cite{Sin68,Bo,Ru76,BoRu}. 

\subsection{Equilibrium states for partially hyperbolic flows}
Anosov actions of higher rank ($\kappa \geq 2$) are examples of partially hyperbolic flows for which the previously cited tools are not available. This makes the theory of equilibrium states much less developped in this case. Existence of a measure of maximal entropy can still be obtained by the upper-semi continuity of the entropy map $\mu\mapsto h(\mu,\varphi_1^A)$\footnote{Here, we denote by $h(\mu,\varphi_1^A$) the metric entropy of the time one map $\varphi_1^A$ with respect to the invariant measure $\mu$.} (see \cite{New} and the introduction of \cite{CPZ20} for an overview of the existing literature). However, this approach is non-constructive and thus does not give much information about the said measure.

Recently, geometrical constructions of equilibrium states were introduced by Climenhaga, Pesin and Zelerowicz \cite{CPZ19,CPZ20,Cli} and Carrasco and Rodriguez-Hertz \cite{CH1,CarHer} independently. The two approaches use different techniques but construct the same objects. Namely, a system $m^s$ (resp. $m^u$) of stable (resp. unstable) "leaf measures" whose product is the equilbrium state. Their constructions already provide new insights for Anosov flows \cite{CPZ19,CH1}. Moreover, unlike Markov partitions or the specification property, they extend to certain classes of partially hyperbolic flows \cite{CPZ20,CarHer} and in particular to Anosov actions of higher rank.

For an Anosov flow, the systems of leaf measures and the equilibrium state can also be constructed using a functional approach. More precisely, one can associate to $X$ (where $X$ is the generator of the flow) a discrete spectrum (the \emph{Ruelle resonances}) by making it act on specially designed functional spaces, see \cite{BKL,BL,BT,GL,Fau08,Fau10} for instance. The topological entropy $h_{\mathrm{top}}(\varphi_1)$ is a resonance called the \emph{first resonance}, i.e it is the real resonance with largest real part for the action on $d_s$-forms. Moreover, the system of stable (resp. unstable) leaf measures $m^s$ (resp. $m^u$) are eigenvectors corresponding to $h_{\mathrm{top}}(\varphi_1)$ as shown by Gouëzel-Liverani \cite[Theorem 5.1]{GouLi} for hyperbolic maps and by the author \cite[Theorem 1]{Hum} for Anosov flows.

A functional approach for general Anosov actions was developped by Guedes Bonthonneau, Guillarmou, Hilgert, and Weich in \cite{GBGHW}. This amounts to constructing a "good" joint spectral theory for the commuting vector fields $X_A$, for $A\in \mathcal W$ in the Weyl chamber, on some functional spaces (the so called \emph{anisotropic spaces}). In a companion paper \cite{GBGW}, they proved that the first Ruelle-Taylor resonance for the action on functions is $0$ and that the corresponding co-resonant states are invariant measures which have similar properties to the SRB measure in the classical rank one case. Moreover, if the action is positively transitive, they showed uniqueness of the SRB measure as well as full support. 

In the rank one case, one can study the equilibrium state associated to a potential $V+J^u$ where $V\in C^{\infty}(\mathcal M,\mathbb R)$ and $J^u$ is the unstable Jacobian by studying the first resonance of the operator $-X+V$ acting on functions, see \cite[Theorem 1]{Hum}. This provides a way to produce infinitely many invariant measures using the spectral approach. Note that in the higher rank case, the operators $X_A+V$ for $A\in \mathcal W$ do not commute so one cannot define their set of Ruelle-Taylor resonances. Thus it is not clear that one can produce many invariant measures using the spectral approach anymore. However, an important observation in the rank one case is that by making $X$ act on $d_s$-forms rather than functions, one can construct the measure of maximal entropy. We will follow this approach in this paper.

\subsection{Statement of results}
In the paper, we will work under the following hypothesis.
\begin{assumption}
\label{assumption}
Consider a smooth Anosov action $\tau: \mathbb A\to C^{\infty}(M;TM)$. Let $A_0\in \mathfrak a$ be a transversally hyperbolic element and suppose that its stable and unstable foliations are minimal, that is, each strong stable and strong unstable manifold is dense in $\mathcal M$. Suppose moreover that they are orientable.\footnote{One could dispose of the orientability condition by introducing a double cover. The minimality condition on the other hand seems to be at the core of the construction of Carrasco and Rodriguez-Hertz that we will use.}
\end{assumption}
 Working under Assumption \ref{assumption}, for any $A\in \mathcal W$, one can apply \cite[Corollary A]{CarHer} to the time-one map $\varphi_1^A$ and the null-potential.  This means that there exist two families of leaf measures $\{m^u_{A,x}\mid x\in \mathcal M, \ m^u_{A,x} \text{ measure on } \mathcal{W}^u(x)\}$  and $\{m^{cs}_{A,x}\mid x\in \mathcal M,\ m^{cs}_{A,x} \text{ measure on } \mathcal{W}^{cs}(x)\}$ whose product is equivalent to the measure of maximal entropy $m_A$ associated to $\varphi_1^A$. Here, $\mathcal W^{u}(x)$ and $\mathcal W^{cs}(x)$ denote the unstable and center stable manifold of $x$ respectively. Our first result states that the construction of Carrasco and Rodriguez-Hertz can be made uniform in the whole Weyl chamber $\mathcal W.$
\begin{theo}[Common measure of maximal entropy]
\label{Theo1}
There exist families of leaf measures $\{m^u_{x}\mid x\in \mathcal M\}$ and $\{m^{cs}_{x}\mid x\in \mathcal M\}$ such that 
\begin{equation}
\begin{split}
\label{eq:mu}
&\forall A\in \mathcal W,    \ (\varphi_1^A)^*(m^u_x)=e^{h_{\mathrm{top}}(\varphi_1^A)}m_{(\varphi_{1}^A)^{-1}x}^u,
\\&\forall A\in \mathcal W,  \ (\varphi_1^A)^*(m^{cs}_x)=e^{-h_{\mathrm{top}}(\varphi_1^A)}m_{(\varphi_{1}^A)^{-1}x}^{cs}.
\end{split}
\end{equation}
The product $m=cm^u\wedge m^{cs}$ $($for some normalizing constant $c>0)$ defines a probability measure which is the measure of maximal entropy associated to {any} $A\in \mathcal W$. It is invariant for any $\varphi_1^A$ for $A\in \mathbf{a}$. Moreovoer, it is ergodic and has the Bernoulli property with respect to any $A\in \mathcal W$. Finally, the entropy mapping\footnote{Here, $h_{\mathrm{top}}(\varphi_1^A)$ denotes the topological entropy of the time-one map $\varphi_1^A$.} $A\mapsto h_{\mathrm{top}}(\varphi_1^A)$   is linear in the Weyl chamber $\mathcal W$. 
\end{theo}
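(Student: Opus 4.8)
The plan is to reduce everything to the uniqueness, guaranteed by Assumption~\ref{assumption}, of the systems of leaf measures produced in \cite[Corollary A]{CarHer}, and then to transport the data attached to one element of $\mathcal W$ to the whole chamber by exploiting that all elements of $\mathcal W$ share the same stable and unstable foliations and generate pairwise commuting flows. Concretely, I would first fix $A_0\in\mathcal W$ and apply \cite[Corollary A]{CarHer} to $\varphi_1^{A_0}$ and the null potential, obtaining: a family $m^u=\{m^u_x\}$ of measures carried by the unstable leaves $\mathcal W^u(x)$ with $(\varphi_1^{A_0})^{*}m^u_x=e^{h_{\mathrm{top}}(\varphi_1^{A_0})}m^u_{(\varphi_1^{A_0})^{-1}x}$; a family $m^{cs}=\{m^{cs}_x\}$ on the center-stable leaves with the analogous rule and exponent $-h_{\mathrm{top}}(\varphi_1^{A_0})$; and the measure of maximal entropy $m_{A_0}$ of $\varphi_1^{A_0}$, which is equivalent to $m^u\wedge m^{cs}$, is unique (minimality forces transitivity) and is ergodic and Bernoulli. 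The key fact I would isolate and reuse is that, for each $A\in\mathcal W$, a system of locally finite measures carried by the unstable (resp. center-stable) foliation and scaled by a constant under $\varphi_1^{A}$ is unique up to a single global multiplicative constant — the constant relating different leaves being available precisely because minimality makes every leaf dense. This is the classical Margulis uniqueness in the framework of \cite{CarHer}, and is where Assumption~\ref{assumption} is genuinely used.

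Granting this, I would derive \eqref{eq:mu} as follows. Fix $A\in\mathcal W$ and let $m^u_{A}$ be the unstable leaf measures produced by \cite[Corollary A]{CarHer} for $\varphi_1^{A}$, so $(\varphi_1^{A})^{*}m^u_{A,x}=e^{h_{\mathrm{top}}(\varphi_1^{A})}m^u_{A,(\varphi_1^{A})^{-1}x}$. Since $\varphi_t^{A_0}$ commutes with $\varphi_t^{A}$ and maps unstable leaves to unstable leaves, the family obtained by pushing $m^u_{A}$ forward along $\varphi_t^{A_0}$ still transforms under $\varphi_1^{A}$ by the same scalar; by the uniqueness above it is a scalar multiple of $m^u_{A}$, the scalar depending multiplicatively on $t$, so $m^u_{A}$ is itself a system of $\varphi_1^{A_0}$-leaf measures. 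Uniqueness of the latter gives $m^u_{A}\propto m^u$, and since the $\varphi_1^{A}$-scaling exponent is unchanged under scalar multiplication, $m^u$ obeys the first line of \eqref{eq:mu}; the same argument on center-stable leaves gives the second line. Setting $m:=c\,m^u\wedge m^{cs}$ with $c>0$ normalizing the mass to $1$ — legitimate because $m^u\wedge m^{cs}\propto m^u_{A}\wedge m^{cs}_{A}$ is finite — the two opposite scalings cancel, so $(\varphi_1^{A})_{*}m=m$ for every $A\in\mathcal W$; and since $m$ is $\varphi_1^{A}$-invariant and equivalent to the ergodic probability measure $m_{A}$, we conclude $m=m_{A}$, i.e. $m$ is the measure of maximal entropy of $\varphi_1^{A}$ for every $A\in\mathcal W$, whence ergodicity and the Bernoulli property with respect to any $A\in\mathcal W$ are inherited through \cite[Corollary A]{CarHer}.

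For invariance under the full action I would argue directly: for arbitrary $A\in\mathfrak a$ the diffeomorphism $\varphi_1^{A}$ commutes with $\varphi_1^{A_0}$, so $(\varphi_1^{A})_{*}m$ is again a $\varphi_1^{A_0}$-invariant probability measure, and by the conjugacy $\varphi_1^{A}$ one has $h_{(\varphi_1^{A})_{*}m}(\varphi_1^{A_0})=h_{m}(\varphi_1^{A_0})=h_{\mathrm{top}}(\varphi_1^{A_0})$, hence $(\varphi_1^{A})_{*}m=m$ by uniqueness of the measure of maximal entropy. For linearity of the entropy, rewrite \eqref{eq:mu} in pushforward form as $(\varphi_t^{A})_{*}m^u=e^{t\,h_{\mathrm{top}}(\varphi_1^{A})}m^u$ for $A\in\mathcal W$; since $\mathcal W$ is a convex cone and the group is abelian, $A+B\in\mathcal W$ and $\varphi_1^{A+B}=\varphi_1^{A}\circ\varphi_1^{B}$, and applying the two scalings successively yields $h_{\mathrm{top}}(\varphi_1^{A+B})=h_{\mathrm{top}}(\varphi_1^{A})+h_{\mathrm{top}}(\varphi_1^{B})$. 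Together with the scaling $h_{\mathrm{top}}(\varphi_1^{tA})=t\,h_{\mathrm{top}}(\varphi_1^{A})$ for $t>0$, this exhibits $A\mapsto h_{\mathrm{top}}(\varphi_1^{A})$ as the restriction to $\mathcal W$ of a linear form (an additive, positively homogeneous function on an open convex cone being such a restriction).

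I expect the main obstacle to be the uniqueness statement of the first paragraph — that the leaf measures are determined up to one global constant — which is what lets the commutation argument produce honest constants rather than leaf-dependent functions, and which is exactly where minimality enters. A secondary point requiring care is to verify that $m^u\wedge m^{cs}$ transforms under each $\varphi_1^{A}$ by the product of the two leafwise scalars, so that they genuinely cancel and $m$ is finite and invariant; this compatibility, however, is already part of the construction in \cite{CarHer}.
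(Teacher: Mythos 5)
Your route is genuinely different from the paper's. The paper proves Theorem~\ref{Theo1} by re-running the Carrasco--Rodriguez-Hertz construction: it builds a compact convex family $\mathcal X$ of normalized push-forwards of Lebesgue under $\varphi_1^{nA_1+mA_2}$, applies Schauder--Tychonoff to extract a system $\mu$ fixed by the $\varphi_1^{A_1}$-renormalization, checks $\mu$ is \emph{appropriate} (so that \cite[Proposition~3.19]{CarHer} identifies its Jacobian with the entropy), then repeats inside a sub-family $\mathcal Y\subset\mathcal X$ to get a simultaneous eigenvector $\beta$ for $\varphi_1^{A_1}$ and $\varphi_1^{A_2}$, and only then invokes the uniqueness of \cite[Theorem~A(4)]{CarHer} to identify $\beta$ with both $m^{cu}_{A_1}$ and $m^{cu}_{A_2}$. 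You skip the re-construction and go straight to the identification: take the CarHer system $m^u_A$ for each $A\in\mathcal W$, use commutation to show it is quasi-invariant with constant Jacobian under $\varphi_1^{A_0}$, and conclude $m^u_A\propto m^u$ by uniqueness. Your argument for invariance under all of $\mathfrak a$ (conjugacy-invariance of metric entropy plus uniqueness of the MME for $\varphi_1^{A_0}$) is also cleaner than the paper's Proposition~\ref{propinv}, which goes back through the two systems of leaf measures and balances the two scaling constants against the total mass of $m$.

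The step that needs repair is the uniqueness statement you isolate. As written --- any system of locally finite unstable leaf measures scaled by \emph{some} constant under $\varphi_1^A$ is unique up to a global scalar --- it is too strong. The uniqueness available from \cite{CarHer} is for \emph{appropriate} systems (fully supported \emph{and} strongly absolutely continuous under stable holonomy), and for an appropriate system the eigenvalue is then forced to equal $e^{h_{\mathrm{top}}(\varphi_1^A)}$ by \cite[Proposition~3.19]{CarHer}; it is not a uniqueness statement among all constant-Jacobian systems with arbitrary eigenvalue. Concretely, at the step ``so $m^u_A$ is itself a system of $\varphi_1^{A_0}$-leaf measures; uniqueness of the latter gives $m^u_A\propto m^u$,'' you have only established $(\varphi_1^{A_0})_*m^u_A=e^{\gamma}m^u_A$ for \emph{some} $\gamma$, and you must check $\gamma=h_{\mathrm{top}}(\varphi_1^{A_0})$ before the uniqueness you intend applies. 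This is repairable: $m^u_A$ is fully supported and strongly absolutely continuous by its CarHer construction (these properties depend only on the foliation, not on the choice of $A$), your commutation argument makes it quasi-invariant under $\varphi_1^{A_0}$ with a constant --- hence H\"older --- Jacobian, so it is appropriate for $\varphi_1^{A_0}$, and \cite[Proposition~3.19]{CarHer} then forces $\gamma=h_{\mathrm{top}}(\varphi_1^{A_0})$. With that inserted, your argument closes; as written the crucial step is asserted rather than justified, and the label ``Margulis uniqueness'' conceals exactly the appropriateness hypotheses and the forced eigenvalue that make it true.
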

Applying Theorem \ref{Theo1} to $-X$, we obtain families of leaf measures $m^s$ and $m^{cu}$.
Next, we argue that $m^s$ and $m^u$ define resonant and co-resonant states associated to the first Ruelle-Taylor resonance for the action on the bundle of $d_s$-forms. Let 
\begin{equation}
\label{eq:dsbundle}
\mathscr E_0^{m}:=\{\omega \in C^{\infty}(\mathcal M;\Lambda^{m}T^*\mathcal M)\mid \forall A\in \mathfrak a, \ \iota_{X_A}\omega=0\},\quad 0\leq m \leq n-\kappa.
\end{equation}
Let $E_s^*,E_u^*, E_0^*\subset T^*\mathcal M$ be the dual bundles of the Anosov splitting \eqref{eq:s}:\begin{equation}
\label{eq:dual}
E_u^*(E_u\oplus E_0)=0, \ E_s^*(E_s\oplus E_0)=0, \ E_0^*(E_u\oplus E_s)=0,\quad T^*\mathcal M=E_u^*\oplus E_0^*\oplus E_s^*.
\end{equation}
The  system of measures $\{m^u_{x}\mid x\in \mathcal M\}$ defines a section $m^u$ of the dual of $\mathscr E_0^{d_s}$ (see \eqref{eq:muv} for the definition of the duality). We will call such a section a $d_s$-current and write $\mathcal D'(\mathcal M;\Lambda^{d_u}(E_s^*\oplus E_u^*))$ for the space of such currents.
Define $\mathbf{X}_A\omega:= \mathcal L_{X_A}\omega$. This is an admissible lift in the sense of \cite[Section 2.2]{GBGHW} and the theory of Ruelle-Taylor resonances is well defined by \cite[Theorem 4]{GBGHW}. Recall from \cite{GBGHW}  that $\lambda\in \mathfrak a_{\mathbb C}^*$ is a Ruelle-Taylor resonance if
\begin{equation}
\label{eq:vp1}
\exists u\in \mathcal D'(M;\Lambda^{d_s}(E_s^*\oplus E_u^*))\setminus\{0\}, \mathrm{WF}(u)\subset E_u^*,\ \forall A\in \mathcal W, \ \ -\mathbf{X}_Au=\lambda(A) u.
\end{equation}
Here, $\mathrm{WF}(u)\subset T^*\mathcal M\setminus \{0\}$ denotes the wavefront set of $u$, see \cite[Chapter 3]{Hor}. In this case, the current $u$ is called a \emph{resonant state} associated to the resonance $\lambda$. We have a dual notion of \emph{co-resonant state}:
\begin{equation}
\label{eq:cores}
\exists v\in \mathcal D'(M;\Lambda^{d_u}(E_s^*\oplus E_u^*))\setminus\{0\}, \mathrm{WF}(v)\subset E_s^*,\ \forall A\in \mathcal W, \ \ \mathbf{X}_Av=\lambda(A) v.
\end{equation}
Thanks to Theorem \ref{Theo1}, we can define $h_{\mathrm{top}}^{\mathcal W}\in \mathfrak a_\mathbb C^*$ such that $h_{\mathrm{top}}^{\mathcal W}(A)=h_{\mathrm{top}}(\varphi_1^A)$ for $A\in \mathcal W$ and extended by linearity on the rest of $\mathfrak a$.
\begin{theo}[First resonance]
\label{maintheo}
Let $\tau$ be an Anosov action on $(\mathcal M,g)$, a closed Riemannian manifold and suppose that $\tau$ satisfies Assumption \ref{assumption}.
Then one has
\begin{equation}
\label{eq:first res}
\forall A\in \mathfrak a,  \quad \begin{cases} \mathbf{X}_Am^u=h_{\mathrm{top}}^{\mathcal W}(A)m^u, \quad  \mathrm{WF}(m^u)\subset E_s^*
\\ -\mathbf{X}_Am^s=h_{\mathrm{top}}^{\mathcal W}(A)m^s, \quad  \mathrm{WF}(m^s)\subset E_u^*. \end{cases}
\end{equation}
In particular, $h_{\mathrm{top}}^{\mathcal W}$ is a Ruelle-Taylor resonance and $m^s$ $($resp. $m^u)$ is a corresponding resonant $($resp. co-resonant$)$  state. Moreover, the set of Ruelle-Taylor resonances is included in $\{\lambda \in \mathfrak a_\mathbb C^* \mid \mathrm{Re}(\lambda(A)) \leq  h_{\mathrm{top}}(\varphi_1^A),\ \forall A\in \mathcal W\}$. 
Finally, $h_{\mathrm{top}}^{\mathcal W}$ is the only resonance on the critical axis $\mathcal C:=\{\lambda \in \mathfrak a_\mathbb C^* \mid \mathrm{Re}(\lambda) = h_{\mathrm{top}}^{\mathcal W}  \}$ and it is simple, i.e it does not have Jordan block and the space of resonant $($resp. co-resonant$)$ states is one-dimensional and thus spanned by $m^s$ $($resp. $m^u)$. 
\end{theo}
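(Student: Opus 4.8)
The plan is to derive Theorem~\ref{maintheo} from the uniform transformation law \eqref{eq:mu}, the regularity of the Carrasco--Rodriguez-Hertz leaf measures, the Lasota--Yorke estimates underlying \cite{GBGHW}, and the ergodic properties of $m$ recorded in Theorem~\ref{Theo1}. First I would promote \eqref{eq:mu} to an identity of currents: using the group law of $(\varphi_t^A)_{t\in\mathbb R}$ together with the homogeneity and linearity of $A\mapsto h_{\mathrm{top}}(\varphi_1^A)$ on $\mathcal W$ (Theorem~\ref{Theo1}), \eqref{eq:mu} yields
\begin{equation*}
(\varphi_t^A)^*m^u=e^{t\,h_{\mathrm{top}}^{\mathcal W}(A)}m^u,\qquad (\varphi_t^A)^*m^s=e^{-t\,h_{\mathrm{top}}^{\mathcal W}(A)}m^s,\qquad A\in\mathcal W,\ t\in\mathbb R,
\end{equation*}
where the pull-back on currents is dual, through the duality \eqref{eq:muv}, to the one on $\mathscr E_0^{d_s}$. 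Differentiating at $t=0$ and extending linearly to $\mathfrak a$ gives the eigenvalue equations in \eqref{eq:first res}. For the wavefront bounds I would argue as in the rank-one case \cite{Hum}: $m^u$ is smooth along the orbit directions by the equivariance just displayed, and smooth along the leaves carrying the $m^u_x$ because those leaf measures have smooth leafwise densities; intersecting the two conormal directions and reading off \eqref{eq:dual} gives $\mathrm{WF}(m^u)\subset E_s^*$, and the same argument for $-X$ gives $\mathrm{WF}(m^s)\subset E_u^*$. Comparing with \eqref{eq:vp1}--\eqref{eq:cores}, this is exactly the assertion that $h_{\mathrm{top}}^{\mathcal W}$ is a Ruelle--Taylor resonance with resonant state $m^s$ and co-resonant state $m^u$.

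Two elementary facts then feed into the rest of the argument. The currents $m^u,m^s$ are \emph{positive}, since they are built from honest leaf measures \cite{CarHer}, and they have \emph{full support}: under Assumption~\ref{assumption} every strong (un)stable leaf is dense while the leaf densities are everywhere positive, so the currents charge every non-empty open set. Moreover $\langle m^u,m^s\rangle>0$, because this pairing equals, up to the normalizing constant $c$ of Theorem~\ref{Theo1}, the total mass of the probability measure $m=c\,m^u\wedge m^{cs}$. Finally $(\varphi_t^A)^*$ preserves positivity of currents (each $\varphi_t^A$ is isotopic to the identity and the splitting is orientable, so no sign changes occur) and commutes with taking total variations.

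The half-space inclusion is the step I expect to be the main obstacle. It reduces to the norm bound $\|(\varphi_t^A)^*\|_{\mathcal B}\le C_\epsilon\,e^{t(h_{\mathrm{top}}(\varphi_1^A)+\epsilon)}$ on the anisotropic space $\mathcal B$ for $d_s$-forms used in \cite{GBGHW}: granting it, $\int_0^\infty e^{-zt}(\varphi_t^A)^*\,dt$ converges in $\mathcal B$ and inverts $-\mathbf X_A-z$ whenever $\mathrm{Re}\,z>h_{\mathrm{top}}(\varphi_1^A)$, so no resonance $\lambda$ can have $\mathrm{Re}\,\lambda(A)>h_{\mathrm{top}}(\varphi_1^A)$ for any $A\in\mathcal W$. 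Proving that bound means combining the Lasota--Yorke mechanism of \cite{GBGHW} (which makes the essential spectral radius of $e^{-t\,h_{\mathrm{top}}(\varphi_1^A)}(\varphi_t^A)^*$ on $\mathcal B$ strictly less than $1$ after raising the order of the space) with the identification of the exponential growth rate of $(\varphi_t^A)^*$ on $d_s$-forms, measured in the anisotropic norm, with the topological entropy $h_{\mathrm{top}}(\varphi_1^A)$ — equivalently, that the Carrasco--Rodriguez-Hertz leaf-measure eigendata is spectrally dominant. This identification is where one genuinely uses \cite{CarHer} (equivalently the variational principle), the entropy entering precisely as the leaf-measure scaling rate, as in the classical Bowen--Margulis computation and in \cite{Hum}. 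The opposite inequality for the abscissa is free, since $h_{\mathrm{top}}^{\mathcal W}$ is itself a resonance.

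Finally I turn to simplicity and uniqueness on $\mathcal C$. Let $\lambda\in\mathcal C$ be a resonance with resonant state $u$. Resonant states associated to resonances on $\mathcal C$ share the regularity of $m^s$ — a measure with leafwise-smooth density — because in the anisotropic calculus this regularity depends only on $\mathrm{Re}\,\lambda$ and is settled at $\mathrm{Re}\,\lambda=h_{\mathrm{top}}^{\mathcal W}$ by the leaf-measure description, cf. \cite{Hum}. Hence $u\ll m^s$, so $u=\beta\,m^s$ with $\beta$ bounded and leafwise smooth, and the eigenvalue equation $-\mathbf X_Au=\lambda(A)u$ becomes $X_A\beta=(h_{\mathrm{top}}^{\mathcal W}(A)-\lambda(A))\beta$, i.e. $\beta\circ\varphi_t^A=e^{-it\,\mathrm{Im}\,\lambda(A)}\beta$ for $A\in\mathcal W$. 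In particular $|\beta|$ is invariant under the whole $\mathbb R^\kappa$-action, hence constant by ergodicity of $m$ (Theorem~\ref{Theo1}), and non-zero (otherwise $u=0$); thus $\beta$ is a unimodular $L^2(m)$-eigenfunction of each $\varphi_1^A$, $A\in\mathcal W$. Since $\varphi_1^A$ is mixing for $m$ — it is even Bernoulli, by Theorem~\ref{Theo1} — this forces $\beta$ constant and $\mathrm{Im}\,\lambda\equiv 0$ on the open cone $\mathcal W$, so $\lambda=h_{\mathrm{top}}^{\mathcal W}$ and $u\in\mathbb C m^s$. Therefore $h_{\mathrm{top}}^{\mathcal W}$ is the only resonance on $\mathcal C$, its space of resonant states is $\mathbb C m^s$, and by the symmetric argument applied to \eqref{eq:cores} with $m^u$ in place of $m^s$, its space of co-resonant states is $\mathbb C m^u$. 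There is no Jordan block: a generalized resonant state $w$ at $h_{\mathrm{top}}^{\mathcal W}$ satisfies $(-\mathbf X_A-h_{\mathrm{top}}^{\mathcal W}(A))w=\ell(A)m^s$ for some linear functional $\ell$ on $\mathfrak a$; pairing with $m^u$, integrating by parts, and using $\mathbf X_Am^u=h_{\mathrm{top}}^{\mathcal W}(A)m^u$ gives $\ell(A)\langle m^s,m^u\rangle=0$, whence $\ell\equiv0$ by positivity of $\langle m^s,m^u\rangle$.
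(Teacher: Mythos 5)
Your high-level strategy is the same as the paper's: promote the transformation law to an eigenvalue equation for currents, locate the critical axis via a propagator growth bound, then use positivity plus the ergodic/mixing properties of $m$ to get simplicity and exclude other resonances on $\mathcal C$. The opening step (differentiating the uniform transformation law to get \eqref{eq:first res}) and the closing steps (the $L^2$-eigenfunction argument for weak mixing, and the integration-by-parts argument against $m^u$ to kill Jordan blocks) are essentially correct and match the paper. However, there are three genuine gaps in the middle.

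\textbf{Wavefront set.} Your justification of $\mathrm{WF}(m^u)\subset E_s^*$ is internally inconsistent. You assert that $m^u$ is smooth along the leaves carrying $m^u_x$ (the unstable leaves) ``because those leaf measures have smooth leafwise densities''; but the Carrasco--Rodriguez-Hertz leaf measures of the MME are of Margulis type and are in general \emph{singular} with respect to leafwise Lebesgue, so this premise fails. Worse, if $m^u$ were smooth along $E_u\oplus E_0$, its wavefront set would sit in the annihilator of $E_u\oplus E_0$, which with the paper's conventions \eqref{eq:dual} is $E_u^*$, not the $E_s^*$ you conclude. The correct statement is that, as a current, $m^u$ is smooth along the \emph{center-stable} direction (this is visible from the duality formula \eqref{eq:muv}, where the test form is integrated against Lebesgue on center-stable plaques and only then against the measure $m^u_x$ transversally), and $E_s^*$ is the annihilator of $E_0\oplus E_s$. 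The paper makes this rigorous by a non-stationary phase estimate in the center-stable variable.

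\textbf{Half-space inclusion.} You are right that this is the main obstacle, and your sketch does not resolve it. The anisotropic space of \cite{GBGHW} is $L^2$-based, and there is no a priori reason the operator norm of $(\varphi_t^A)^*$ on that space grows like $e^{t\,h_{\mathrm{top}}}$; if one knew that, the theorem would be nearly immediate. You acknowledge that the identification of the growth rate with $h_{\mathrm{top}}$ is ``where one genuinely uses \cite{CarHer}'' but you do not carry it out, and as stated (a bound on the anisotropic norm) it is not what the paper proves. The paper instead builds a \emph{new} $C^0$-type norm $\|\cdot\|_u$ on $\mathscr E_0^{d_s}$ directly out of the leaf measure $m^u$ (Lemma~\ref{Crit} and Proposition~\ref{realnorm}), for which the propagator satisfies the sharp bound $\|e^{-t\mathbf X_A}f\|_u\le C\,e^{t\,h_{\mathrm{top}}^{\mathcal W}(A)}\|f\|_u$, and then combines this with the joint-propagator parametrix $R(\lambda)$ of \cite{GBGHW} to show $F(\lambda)$ is invertible off the half-space (Lemma~\ref{lemm crit}). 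This norm construction is a real step of the argument, not a corollary of the Lasota--Yorke machinery.

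\textbf{Simplicity and uniqueness on $\mathcal C$.} The claim that a resonant state $u$ on $\mathcal C$ ``shares the regularity of $m^s$'' so that $u\ll m^s$ with bounded density, ``because in the anisotropic calculus this regularity depends only on $\mathrm{Re}\,\lambda$,'' is the crux and is not a standard fact of anisotropic analysis. It is precisely what the paper's Proposition~\ref{First} has to establish, and it is done there through the explicit formula \eqref{eq:conv} for the spectral projector $\Pi_0(h_{\mathrm{top}}^{\mathcal W})$ as a (Ces\`aro-type) limit of $R^k$, together with positivity of $\Pi_0^*$ applied to a nowhere-vanishing section of $\Lambda^{d_s}E_u^*$, to obtain a Radon--Nikodym domination $|\langle\theta,\eta\rangle|\le C\langle\theta_0,|\eta|\rangle$ (the paper's \eqref{eq:Radon2}). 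The full-support statement for $\theta_0$ then follows from the known full support of $m^u$, \emph{after} one has reduced to a single positive co-resonant state; this circularity is broken in the paper by working with a basis of positive co-resonant states. None of this is a soft consequence of $\mathrm{Re}\,\lambda$ sitting on the critical axis, and your proof as written skips it. Once granted, your ergodicity/mixing argument for $\beta$ and your Jordan-block computation are fine and essentially reproduce the paper's Propositions~\ref{First} and~\ref{label mixing}.
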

This can be seen as a generalization of \cite[Theorem 1]{Hum} to the higher rank case or as an analog of \cite[Theorem 1]{GBGW} for the measure of maximal entropy. Similarly to the rank one case, the fact that the first resonance is simple essentially follows from the ergodicity of the measure of maximal entropy $m$. The absence of other resonances on the critical axis follows from the weak-mixing of $m$ which is implied by the stronger Bernoulli property satisfied by $m$, see \cite[Theorem C]{CarHer}.

The functional approach yields a Bowen-type formula for the measure of maximal entropy, that is, we obtain a formula for $m$ in terms of periodic orbits.
Recall that a point $x\in \mathcal M$ is called periodic if there is a $A\in \mathfrak a$ such that $\tau(A)x=x$. If $A\in \mathcal W$, it is known (see for instance \cite[Lemma 4.1]{GBGW}) that $T_x:=\{\tau(A')x\mid A'\in \mathfrak a\}$ is a $\kappa$-dimensional torus. The set of all periodic torii is denoted by $\mathcal T$ and for $T\in \mathcal T$, the associated lattice is denoted by  $L(T):=\{A'\in \mathfrak a\mid \tau(A')x=x\}$. For $X\subset \mathfrak a$, we denote by $|X|$ its volume and pushing forward the Lebesgue measure gives a measure $\lambda_T$ on each torus $T$. With these notations, our next result reads:
\begin{theo}[Bowen-type formula]
\label{theoBowen}
Under Assumption \ref{assumption} and let $\mathcal C$ be a proper sub-cone of the positive Weyl chamber $\mathcal W$. Let $\eta\in \mathfrak a^*$ be a dual element which is positive in a slightly larger open cone containing $\mathcal C$. For positive numbers $0<a<b$, define  $\mathcal C_{a,b}:=\{A\in \mathcal C\mid \eta(A)\in[a,b]\}$. Then for any $f\in C^{\infty}(\mathcal M)$, one has
\begin{equation}
\label{eq:bowen}
m(f)=\lim_{N\to +\infty}\frac{1}{|\mathcal C_{aN,bN}|}\sum_{T\in \mathcal T}\sum_{A\in \mathcal C_{aN,bN}\cap L(T)}e^{-h_{\mathrm{top}}(\varphi_1^A)}\int_T fd \lambda_T.
\end{equation}
\end{theo}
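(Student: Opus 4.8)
The plan is to deduce \eqref{eq:bowen} from a Guillemin-type trace formula for the action on $\mathscr E_0^{d_s}$, the spectral information of Theorem~\ref{maintheo}, and a Tauberian argument, following the strategy used in \cite{GBGW} for the counting of periodic tori and in \cite{Hum} in the rank-one case. \emph{Step 1: the trace formula.} Fix $A$ in the interior of $\mathcal W$, so that for every periodic torus $T$ with $A\in L(T)$ the transverse linearized Poincar\'e map $P^T_A:=d\varphi_1^A|_{E_s(x)\oplus E_u(x)}$ $(x\in T)$ is hyperbolic. A Guillemin--Atiyah--Bott trace formula for the admissible lift $\mathbf X_A=\mathcal L_{X_A}$ on $\mathscr E_0^{d_s}$ (as in \cite{GBGHW}, modulo a standard regularization of the part of the cone near the vertex, where flat traces degenerate) then reads, for $f\in C^\infty(\mathcal M)$,
\[
\tr^\flat\!\big((\varphi_{-1}^A)^*M_f\big)=\sum_{\substack{T\in\mathcal T\\ A\in L(T)}} w_A(T)\int_T f\,d\lambda_T,\qquad w_A(T):=\frac{\tr\big(\Lambda^{d_s}(P^T_A)^{-1}\big)}{\big|\det\big(I-(P^T_A)^{-1}\big)\big|},
\]
where $M_f$ is multiplication by $f$, $\tr^\flat$ is the flat trace on sections of $\mathscr E_0^{d_s}$, and $\tr\Lambda^{d_s}(P^T_A)^{-1}$ is the trace of the action induced by $(\varphi_{-1}^A)^*$ on the fibre $\Lambda^{d_s}(E_s^*\oplus E_u^*)\cong(\mathscr E_0^{d_s})_x$. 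A short linear-algebra computation---which is precisely why the bundle of $d_s$-forms is the correct one---shows that, decomposing $(P^T_A)^{-1}=(P^T_A|_{E_s})^{-1}\oplus(P^T_A|_{E_u})^{-1}$ into its expanding and contracting blocks, the numerator is dominated by $\det\big((P^T_A|_{E_s})^{-1}\big)$ and the denominator equals $\big|\det\big((P^T_A|_{E_s})^{-1}\big)\big|\,(1+o(1))$, whence $w_A(T)=\sgn\det(P^T_A|_{E_s})\,(1+o(1))$ as $\eta(A)\to+\infty$, uniformly over $T$ and over $A$ in the proper sub-cone $\mathcal C$, where the hyperbolicity rates are uniform. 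Under Assumption~\ref{assumption} the bundle $E_s$ is orientable, and since $\mathcal W$ is a connected cone on which $\varphi_1^A|_{E_s}$ depends continuously and equals the identity at the vertex, $\det(P^T_A|_{E_s})>0$; hence $w_A(T)=1+o(1)$.

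\emph{Step 2: the generating function.} Introduce $Z_f(\lambda):=\int_{\mathcal C}e^{-\lambda(A)}\,\tr^\flat\!\big((\varphi_{-1}^A)^*M_f\big)\,dA$, convergent for $\Re\lambda$ large on $\mathcal C$; up to the regularization of Step~1, this is the flat trace of the joint Ruelle--Taylor resolvent of \cite{GBGHW} composed with $M_f$, and so extends meromorphically with poles only at Ruelle--Taylor resonances. By Theorem~\ref{maintheo}, every resonance satisfies $\Re\lambda(A)\le h_{\mathrm{top}}^{\mathcal W}(A)$ for $A\in\mathcal W$, the unique resonance on the critical axis $\{\Re\lambda=h_{\mathrm{top}}^{\mathcal W}\}$ is $h_{\mathrm{top}}^{\mathcal W}$, and it is simple, with rank-one spectral projector $\Pi_0$. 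Normalizing $m^s,m^u$ as in Theorem~\ref{Theo1} so that $m^s\wedge m^u=m$, one has $\langle m^u,m^s\rangle=1$ and $\Pi_0=m^s\otimes m^u$ (consistently with $\tr^\flat\Pi_0=1$, as $\Pi_0$ has rank one), hence $\tr^\flat(\Pi_0 M_f)=\langle m^u,f\,m^s\rangle=(m^s\wedge m^u)(f)=m(f)$. Since $\tr^\flat\!\big((\varphi_{-1}^A)^*M_f\big)=e^{h_{\mathrm{top}}^{\mathcal W}(A)}m(f)+\cdots$, the remaining terms being of strictly smaller exponential type on $\mathcal C$, one obtains $Z_f(\lambda)=m(f)\,P_{\mathcal C}\big(h_{\mathrm{top}}^{\mathcal W}-\lambda\big)+(\text{holomorphic near }h_{\mathrm{top}}^{\mathcal W})$, where $P_{\mathcal C}(\mu)=\int_{\mathcal C}e^{-\mu(A)}\,dA$ is rational with a $\kappa$-fold singularity at $\mu=0$ that matches the volume growth $|\mathcal C_{aN,bN}|\sim c\,N^\kappa$.

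\emph{Step 3: Tauberian argument.} A multidimensional Wiener--Ikehara Tauberian theorem---exactly of the type carried out in \cite{GBGW} for periodic tori---applied to $Z_f$ (first for $f\ge0$, then in general by linearity) yields
\[
\frac1{|\mathcal C_{aN,bN}|}\sum_{T\in\mathcal T}\sum_{A\in\mathcal C_{aN,bN}\cap L(T)}e^{-h_{\mathrm{top}}^{\mathcal W}(A)}\,w_A(T)\int_T f\,d\lambda_T\ \xrightarrow[N\to\infty]{}\ m(f).
\]
To pass to \eqref{eq:bowen} it remains to replace $w_A(T)$ by $1$. By Step~1, $|1-w_A(T)|\le g(\eta(A))$ with $g(r)\to0$ as $r\to\infty$, so the difference between the sum above and the one in \eqref{eq:bowen} (recall $h_{\mathrm{top}}(\varphi_1^A)=h_{\mathrm{top}}^{\mathcal W}(A)$ on $\mathcal W$) is at most $\|f\|_\infty\big(\sup_{r\ge aN}g(r)\big)\cdot\tfrac1{|\mathcal C_{aN,bN}|}\sum_{T}\sum_{A\in\mathcal C_{aN,bN}\cap L(T)}e^{-h_{\mathrm{top}}^{\mathcal W}(A)}\lambda_T(T)$ in absolute value; the last factor is $O(1)$---indeed it converges to $1$, which is \eqref{eq:bowen} with $f\equiv1$ and gives the announced counting of periodic tori---so the difference tends to $0$ and \eqref{eq:bowen} follows.

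\emph{Main difficulty.} The crux is the Tauberian step: running Wiener--Ikehara in $\kappa$ variables requires, beyond the location of the resonances furnished by Theorem~\ref{maintheo}, quantitative polynomial-in-$\Im\lambda$ bounds on the joint resolvent on $\mathscr E_0^{d_s}$ along and slightly past the critical axis, uniformly, together with control of the part of the cone near the vertex. Establishing these amounts to transposing to the present setting (the bundle $\mathscr E_0^{d_s}$ and the entropy resonance) the resolvent estimates and contour deformations developed in \cite{GBGHW,GBGW}; the remaining ingredients---the trace formula of Step~1, the identification $w_A(T)\to1$ (where the orientability hypothesis enters, through the sign of $\det(P^T_A|_{E_s})$), and $\tr^\flat(\Pi_0 M_f)=m(f)$---are, as in the rank-one case, essentially linear algebra and bookkeeping.
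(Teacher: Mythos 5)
Your Step~1 (the trace formula and the asymptotic dominance $\tfrac{\tr\Lambda^{d_s}\mathcal P_A}{|\det(I-\mathcal P_A)|}\to 1$, using the orientability of $E_s$) and the computation $\tr^\flat(\Pi_0 M_f)=m(f)$ via the rank-one projector $m^s\otimes m^u$ are both correct and agree with the paper's Steps~1 and~4 and with the residue computation in its Step~2. The problem lies in the analytic framework of Steps~2--3. You define $Z_f(\lambda)=\int_{\mathcal C}e^{-\lambda(A)}\tr^\flat\bigl((\varphi_{-1}^A)^*M_f\bigr)\,dA$ and assert it "is the flat trace of the joint Ruelle--Taylor resolvent of \cite{GBGHW} composed with $M_f$, and so extends meromorphically with poles only at Ruelle--Taylor resonances" in $\lambda\in\mathfrak a^*_{\mathbb C}$, from which you want to run a $\kappa$-variable Wiener--Ikehara theorem. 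But there is no such object: as the paper is careful to point out (and as \cite{GBGHW,GBGW} emphasize), a "joint resolvent" with a scalar $\lambda\in\mathfrak a^*_{\mathbb C}$ does not exist for a commuting family; Ruelle--Taylor resonances arise from a Koszul-complex/parametrix construction, not from a meromorphic operator-valued family in $\lambda$. Consequently, the meromorphy of $Z_f(\lambda)$ and the polynomial bounds in $\Im\lambda$ that Wiener--Ikehara requires are not available from the cited literature; you acknowledge this in your "Main difficulty" paragraph but misattribute the needed estimates to \cite{GBGHW,GBGW}, which in fact avoid this framework entirely.

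The paper's route is structurally different and designed precisely to bypass this obstruction. It replaces the would-be joint resolvent by the joint \emph{propagator} $R_{\psi,m}(\lambda)=\int_{\mathcal W}e^{-\mathbf X_A-\lambda(A)}\psi(A)\,dA$ for a bump $\psi\in C_c^\infty(\mathcal W)$, and takes the flat trace of the shifted resolvent $T^\lambda_{\psi,f,m}(s)=fR_{\psi,m}(\lambda)(R_{\psi,m}(\lambda)-s)^{-1}$ in a \emph{single} complex variable $s$, using the fact that $R_{\psi,d_s}(h^{\mathcal W}_{\mathrm{top}})$ is bounded with essential spectral radius $<1/2$ and leading eigenvalue $1$ with a rank-one, Jordan-block-free projector (Proposition~\ref{Prop R}). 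Cauchy's formula then gives the Laurent coefficients $c_k=m(f)+O(e^{-\epsilon k})$, where $c_k$ is the periodic-orbit sum weighted by the $k$-fold convolution $\psi^{*k}$; the passage to indicator functions of the truncated cones $\mathcal C_{aN,bN}$ is done by an explicit approximation of $\mathbf 1_{\mathcal C_{aN,bN}}$ by linear combinations of $\psi^{*k}_\sigma$, not by a Tauberian theorem. To repair your proposal along its present lines you would need to establish, from scratch, a meromorphic continuation of $Z_f(\lambda)$ in $\lambda\in\mathfrak a^*_{\mathbb C}$ with the requisite growth estimates; it is substantially easier, and closer to what is actually proved in \cite{GBGHW,GBGW}, to switch to the shifted-resolvent/convolution-power scheme.
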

For Anosov flows, Bowen-type formulas for equilibrium states are usually obtained using the specification property. Hence, the extension of \eqref{eq:bowen} from the classical rank one case to the higher rank case is a priori non-trivial. Here, the use of the specification property is replaced by more analytical techniques and more precisely by the use of \emph{Guillemin's trace formula}, see Section \ref{Bowen type formula} for more details.

A similar formula was obtained in \cite[Theorem 4]{GBGW} for the SRB measure.
In the case of the Weyl chamber flow on a locally symmetric space $\mathcal M=\Gamma\setminus G/M$, the topological entropy map is given $\mathcal W\ni A\mapsto h_{\mathrm{top}}(\varphi_1^A)=2\rho(A)$ where $\rho$ is the half sum of positive roots. Note that the formula in this case was already obtained in \cite[Equation (0.3)]{GBGW} as in this special case, both the SRB measure and measure of maximal entropy coincide with the Haar measure.  In a recent paper \cite{Vin}, Vinhage constructed non-algebraic Anosov actions without rank one factor. In Appendix \ref{secappendix}, we show that his construction also provides examples of Anosov actions with no rank one factor for which the measure of maximal entropy and SRB measure are different. This gives further motivation to study the measure of maximal entropy in a general setting.

This result is interpreted as an equidistribution result of the periodic torii. As a consequence, we deduce the following corollary on the counting of periodic torii. Let
\begin{equation}
\|h_{\mathrm{top}}^{\mathcal W}\|:=\sup_{A\in \mathfrak a\setminus \{0\}}\frac{|h_{\mathrm{top}}^{\mathcal W}(A)|}{\|A\|}.
\end{equation}
\begin{manualcorr}{3.1}[Torii counting]
\label{corr}
Let $\mathcal C\subset \mathcal W$ be any proper subcone of the positive Weyl chamber. For $N>0$, define $\mathcal C_N:=\{A\in \mathcal C, \ h_{\mathrm{top}}(\varphi_1^A)/\|h_{\mathrm{top}}^{\mathcal W}\|\leq N\}$. Then one has
\begin{equation}
\label{eq:cor}
\lim_{N\to+\infty}\frac 1 N \ln \Big( \sum_{T\in \mathcal T}\sum_{A\in L(T)\cap \mathcal C_N}\mathrm{Vol}(T)\Big)=\|h_{\mathrm{top}}^{\mathcal W}\|.
\end{equation}
\end{manualcorr}
For an Anosov flow, periodic torii correspond to closed geodesics and $\|h_{\mathrm{top}}^{\mathcal W}\|$ is just the topological entropy of the flow $h_{\mathrm{top}}(\varphi_1)$. This means that \eqref{eq:cor} is a (weaker) logarithmic version of the Prime Orbit theorem \cite[Theorem 9.3]{PaPo} which holds for minimal Anosov actions of higher rank.

Other counting formulas on the number of periodic torii were obtained before, we cite \cite{Dei,Kni05,ELMV09,ELMV11,DaLi} and refer to the introductions of \cite{GBGW,DaLi} for a comparison of the different formulas. We remark that Dang and Li \cite[Theorems 1.2,1.3]{DaLi} give an exponentially small reminder in their equidistribution and counting results. Such a result could in theory be obtained for a general Anosov action if one could show the existence of a spectral gap in the Ruelle-Taylor resonances. Nevertheless, it is not clear under what assumption such a gap could be obtained.

We note however that all previously cited works were done in the case of Weyl chamber flows on locally symmetric spaces. The first counting result valid for general Anosov actions was proven in
 \cite[Corollary 0.4]{GBGW} under some asymptotic assumption on the Poincaré determinant. The previous corollary is thus a generalization of \cite[Corollary 0.4]{GBGW} where no assumption on the Poincaré determinant is needed.

\textbf{Acknowledgements.} The author would like to first thank Colin Guillarmou and Thibault Lefeuvre for their guidance and advice during the writing of this paper.

The author would also like to thank Yannick Guedes Bonthonneau and Tobias Weich for comments on an earlier version of the paper and Tobias Weich for suggesting the problem. Finally, he would like to thank Kurt Vinhage and Alp Uzman for interesting discussions on Anosov actions and Pablo Carrasco for suggesting the proof of Proposition \ref{propinv}.

This research was supported by the European Research Council (ERC) under
the European Union’s Horizon 2020 research and innovation programme (Grant agreement no. 101162990 — ADG).

\section{Measure of maximal entropy for Anosov actions}
In this section, we review the construction of the measure of maximal entropy of Carrasco and Rodriguez-Hertz and show Theorem \ref{Theo1}. Fix a homeomorphism $f:X\to X$ of a compact metric space $X$ and let $\mathcal P_f(X)$ be the set of $f$-invariant Borel probability measures on $X$.  Recall the variational principle which states that
\begin{equation}
\label{eq:varia}
h_{\mathrm{top}}(f)=\sup_{\mu \in \mathcal P_f(M)}h(f,\mu),
\end{equation}
where $h_{\mathrm{top}}(f)$ is the topological entropy of $f$ and $h (f, \mu)$ is the metric entropy with respect to $\mu$. An invariant probability measure $\mu$ is a measure of maximal entropy (or an equilibrium state for the null potential) if $h(f,\mu)=h_{\mathrm{top}}(f)$.

In the following, we consider an Anosov action $\tau :\mathbb A\to C^{\infty}(\mathcal M;T\mathcal M)$ and fix a transversally hyperbolic element $A_0$ as well as its positive Weyl chamber $\mathcal W$. We further suppose that the unstable and stable foliations are minimal which allows us to use \cite[Corollary A]{CarHer}. Before that, we need to introduce some terminology.

Both geometric approaches of Climenhaga et al and Carrasco and Rodriguez-Hertz start by constructing leaf measures and then deduce the construction of the equilibrium state by a product construction. For an Anosov action, all bundles $E_s,E_u,E_{cs}:=E_0\oplus E_s, E_{cu}:=E_0\oplus E_u$ from Definition \ref{Anosov} are integrable to Hölder continuous foliations denoted by $\mathcal W^s,\mathcal W^u,\mathcal W^{cs},\mathcal W^{cu}$ respectively and the leaves of the foliation are smooth, see \cite[Theorem 6.2.8 and \S 6.4]{KaHas}. These foliations are called the stable, unstable, center stable and center unstable foliation respectively. In the following, a system of leaf measure will be an element of
\begin{equation}
\label{eq:bundle}
\mathrm{Meas}^{\bullet}:=\{\nu: [x]\in (\mathcal M/\sim_{\bullet})\mapsto \nu_x \in \mathrm{Rad}(\mathcal W^{\bullet}(x))\},
\end{equation}
where $\bullet=s,u,cs,cu$, $(\mathcal M/\sim_{\bullet})$ is the quotient of $\mathcal M$ by the equivalence relation defined by $x\sim_{\bullet} y \iff \mathcal W^{\bullet}(x)=\mathcal W^{\bullet}(y)$ and $\mathrm{Rad}(X) $ denotes the set of Radon measures on~$X$. In other words, a system of  $\bullet-$leaf measures is the data of $\{m^{\bullet}_x\mid x\in \mathcal M\} $ where $m^{\bullet}_x$ is a Radon measure on a $\bullet-$manifold $\mathcal W^{\bullet}(x)$ satisfying the following compatibility condition. For any $x,x'\in \mathcal M$ such that $x\sim_{\bullet}x'$, one has $m^{\bullet}_x=m^{\bullet}_{x'}$. We can also define
\begin{equation}
\label{eq:Con}
\begin{split}
\mathrm{Con}^{\bullet}&:=\{f: [x]\in (\mathcal M/\sim_{\bullet})\mapsto f_x \in \mathrm{Con}(\mathcal W^{\bullet}(x))\},
\\
(\mathrm{Con}^+)^{\bullet}&:=\{f: [x]\in (\mathcal M/\sim_{\bullet})\mapsto f_x \in \mathrm{Con}^+(\mathcal W^{\bullet}(x))\}
\end{split}
\end{equation}
where $\mathrm{Con}(X)$ denotes the set of compactly supported smooth functions on $X$ and $\mathrm{Con}^+(X)$ denotes the set of non-negative compactly supported smooth functions on $X$. In the following, we might drop the index $\bullet$ if it is clear from the context which one we refer to. Note that $\mathrm{Meas}^{\bullet}$ is naturally endowed with the weak topology induced by  $\mathrm{Con}^{\bullet}$. We now state \cite[Corollary A, Theorem C]{CarHer} in the special case of Anosov actions.

For any $A\in \mathcal W$, there exists $m_A\in \mathcal P_{\varphi_1^A}(\mathcal M)$ and families of leaf measures $m_A^{\bullet}=\{m^{\bullet}_{x,A}\mid x\in \mathcal M\}$ where $\bullet=u,s,cu,cs$ such that :
\begin{enumerate}
\item the measure $m_A$ is the \emph{unique} measure of maximal entropy (MME) for the partially hyperbolic dynamical system $(\mathcal M,\varphi_1^A)$.
\item The measure $m^{\bullet}_x$ for any $ x\in \mathcal M$ is positive on relatively open sets, that is, it has full support in each leaf.
\item For any $x\in \mathcal M$, one has
\begin{align*}
m^{\bullet}_{\varphi_1^A x,A}&	=e^{h_{\mathrm{top}}(\varphi_1^A)}(\varphi_1^A)_*m^\bullet_{x,A}, \ \bullet\in \{u,cu\}
\\
m^{\bullet}_{\varphi_1^Ax,A}&	=e^{-h_{\mathrm{top}}(\varphi_1^A)}(\varphi_1^A)_*m^\bullet_{x,A}, \ \bullet\in \{s,cs\}.
\end{align*}
\item For any measurable partition $\xi$ which refines the partition by unstable (resp. stable) manifolds, the conditional measures of $m_A$ are equivalent ($m_A$ a.e) to the leaf measures. Moreover, $(m_A)_{|B(x,\delta)}$ is equivalent to the product measures $m^u_{x,A}\times m^{cs}_{x,A}$ and $m^s_{x,A}\times m^{cu}_{x,A}$ for any $x\in \mathcal M$ and $\delta>0$ small enough.
\item The measure of maximal entropy $m_A$ satisfies the \emph{Gibbs property}. For any $\epsilon>0$, there exists $A,B>0$ such that
$$\forall x\in \mathcal M, \ \forall n\geq 0, \quad A\leq \frac{m_A(B_n(x,\epsilon))}{e^{-nh_{\mathrm{top}}(\varphi_1^A)}}\leq B $$
where $B_n(x,\epsilon)$ is the Bowen ball, defined by
\begin{equation}\label{eq:BB}
B_n(x,\epsilon):=\{y\in \mathcal M \mid \max_{k=0}^nd(\varphi_k^A x,\varphi_k^A y)<\epsilon\}. 
\end{equation}
\item The measure $m$ has the Bernoulli property.
\end{enumerate}
We note that a similar construction of a system of leaf measures was obtained by Buzzi, Fisher and Tazhibi in \cite{BFT}.
In the rest of the section, we will prove that the construction of $m^\bullet$ can be made independent of $A$ in the Weyl chamber $\mathcal W$.
\begin{prop}
\label{uniform}
There exist families of leaf measures $m^{\bullet}=\{m^{\bullet}_{x}\mid x\in \mathcal M\}$ where $\bullet=u,s,cu,cs$ such that for any $x\in \mathcal M$ and any $A\in \mathcal W$
\begin{equation}
\label{eq:phit}
\begin{split}
&m^{\bullet}_{\varphi_1^A x}	=e^{h_{\mathrm{top}}(\varphi_1^A)}(\varphi_1^A)_*m^\bullet_{x}, \quad \bullet\in \{u,cu\}
\\
&m^{\bullet}_{\varphi_1^Ax}	=e^{-h_{\mathrm{top}}(\varphi_1^A)}(\varphi_1^A)_*m^\bullet_{x}, \quad \bullet\in \{s,cs\}.
\end{split}
\end{equation}
As a consequence, the measure of maximal entropy $m$ is common to all $A\in \mathcal W$ and the entropy mapping $A\mapsto h_{\mathrm{top}}(\varphi_1^A)$ is linear in the Weyl chamber $\mathcal W$.
\end{prop}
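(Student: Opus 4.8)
The plan is to exploit the \emph{uniqueness} of the objects furnished by Carrasco--Rodriguez-Hertz: for each $A\in\mathcal W$ the families $m_{A}^{\bullet}$ are the essentially unique systems of leaf measures transforming as in item (3) above, so two elements of $\mathcal W$ must produce the \emph{same} leaf measures up to a positive scalar, and one only has to pin down the scalar and check it can be made globally constant. Concretely, I would first fix the reference element $A_{0}\in\mathcal W$ and normalise $m^{\bullet}:=m_{A_{0}}^{\bullet}$. The key point is that for \emph{any} $A\in\mathcal W$, the flow $\varphi_{1}^{A}$ commutes with $\varphi_{1}^{A_{0}}$, preserves the foliation $\mathcal W^{\bullet}$, and maps each leaf diffeomorphically to another leaf; hence the pushed-forward system $\{(\varphi_{1}^{A})_{*}m^{\bullet}_{x}\}$ is again a system of leaf measures which, by a direct computation using the $A_{0}$-equivariance of $m^{\bullet}$ and commutativity, still satisfies the defining scaling relation of item (3) for $\varphi_{1}^{A_{0}}$ with the \emph{same} constant $e^{h_{\mathrm{top}}(\varphi_1^{A_0})}$ (up to the sign depending on $\bullet$). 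By the uniqueness statement in \cite[Corollary A]{CarHer} — more precisely the uniqueness of the leaf-measure system up to normalisation, which follows from minimality of the foliations and ergodicity of $m_{A_0}$ — we get a constant $c(A)>0$ with $(\varphi_{1}^{A})_{*}m^{\bullet}_{x}=c(A)\,m^{\bullet}_{\varphi_1^A x}$ for all $x$, at least for $\bullet=u$ (and the reciprocal for $\bullet=s$, etc.).

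Next I would identify the constant. Iterating the relation along $\varphi_{n}^{A}=\varphi_{1}^{nA}$ and using that $m^{\bullet}$ has full support in each leaf (item (2)), one sees $A\mapsto \log c(A)$ is additive on $\mathcal W\cap\mathbb Q^{\kappa}$ and, from the local product structure in item (4), it is also the exponential growth rate of $m(B_{n}^{A}(x,\epsilon))$, i.e. by the Gibbs property (item (5)) one gets $c(A)=e^{h_{\mathrm{top}}(\varphi_1^A)}$ whenever $A$ is a rational multiple of elements whose leaf measures have already been matched; continuity of $A\mapsto h_{\mathrm{top}}(\varphi_1^A)$ (or of $A\mapsto \varphi_1^A$ in the $C^\infty$ topology, together with weak-$*$ continuity of the leaf-measure assignment) extends this to all of $\mathcal W$. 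This simultaneously shows $c$ is the exponential of a linear functional, which is exactly the asserted linearity of the entropy map: $h_{\mathrm{top}}(\varphi_1^{A})=\log c(A)$ is additive and homogeneous on the cone $\mathcal W$, hence the restriction to $\mathcal W$ of a linear form on $\mathfrak a$.

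Finally, once the four systems $m^{u},m^{s},m^{cu},m^{cs}$ are fixed independently of $A$, the local product measures $m^{u}_{x}\times m^{cs}_{x}$ (resp. $m^{s}_x\times m^{cu}_x$) glue — by item (4) and minimality — to a single globally defined probability measure $m$, and since each factor satisfies \eqref{eq:phit} for every $A\in\mathcal W$, the product $m$ is $\varphi_1^A$-invariant for every $A\in\mathcal W$; as $m=m_{A}$ is the MME for each such $A$, the claim follows. I expect the \textbf{main obstacle} to be the uniqueness/normalisation step: \cite[Corollary A]{CarHer} is stated for a fixed map, so one must argue carefully that the \emph{system of leaf measures} (not just the resulting MME) is unique up to a scalar — this is where minimality of the strong foliations and ergodicity of $m_{A_0}$ are genuinely used — and that the resulting scalar $c(A)$ depends measurably/continuously on $A$ so that the additivity argument can be promoted from a dense subcone to all of $\mathcal W$.
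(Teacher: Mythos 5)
Your overall strategy is genuinely different from the paper's, and the first half of it is correct. The paper runs a Schauder--Tychonoff fixed-point argument twice (over the convex set of pushforwards of Lebesgue measure, first for $\varphi_1^{A_1}$, then, starting from the resulting appropriate measure $\mu$, for $\varphi_1^{A_2}$) to produce a single appropriate system $\beta$ that is quasi-invariant under both $\varphi_1^{A_1}$ and $\varphi_1^{A_2}$ with constant Jacobian, and then identifies $\beta$ with both $m^{cu}_{A_1}$ and $m^{cu}_{A_2}$ by uniqueness. You instead fix $m^\bullet = m^\bullet_{A_0}$, push forward by $\varphi_1^A$, and use commutativity plus the uniqueness-up-to-scalar of the $A_0$-system to get $(\varphi_1^A)_* m^u_x = c(A)\, m^u_{\varphi_1^A x}$. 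This part is correct and is in fact the exact argument the paper uses later in Proposition~\ref{propinv} to establish $\varphi_1^A$-invariance of $m$ for \emph{all} $A\in\mathfrak a$ (where no identification of the constant is needed). The two routes diverge at the identification step, and that is where your proposal has a gap.

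The gap is in identifying $c(A)$ with $e^{\pm h_{\mathrm{top}}(\varphi_1^A)}$. You invoke the Gibbs property of \cite[Theorem~C]{CarHer} (item (5) in the list), but that Gibbs bound is stated for $m_A$, the MME of $\varphi_1^A$, and using it presupposes $m = m_A$, which is precisely what is at stake. The Bowen-ball estimate $m\bigl(B_n^A(x,\epsilon)\bigr) \asymp c(A)^n$ is not a consequence of ``the local product structure in item (4)'' alone; obtaining uniform two-sided bounds in the presence of a nontrivial center direction is the hard part of \cite[Section~3]{CarHer} and cannot be waved through. Likewise, your extension from $\mathcal W\cap\mathbb Q^\kappa$ to all of $\mathcal W$ requires continuity of $A\mapsto h_{\mathrm{top}}(\varphi_1^A)$ or of the leaf-measure assignment, neither of which is available before the proposition is proved. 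The clean fix, and the one the paper itself relies on at the analogous step, is to observe that once $(\varphi_1^A)_* m^u_x = c(A)\, m^u_{\varphi_1^A x}$ is established, the system $m^u$ is appropriate for $\varphi_1^A$ (strong absolute continuity and full support are properties of the Anosov splitting, and quasi-invariance holds with \emph{constant}, hence H\"older, Jacobian), so one may invoke \cite[Proposition~3.19]{CarHer} directly to conclude the Jacobian is $e^{h_{\mathrm{top}}(\varphi_1^A)}$. With that single substitution your route becomes correct and arguably shorter than the paper's double fixed-point construction; but as written, the invocation of the Gibbs property is circular and the proof does not close.
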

\begin{proof}
We adapt slightly the arguments of \cite{CarHer}. Note that from \cite[Equation $(12)$]{CarHer}, the system of measures $m^u_x$ is constructed from $m^{cu}_x$ by taking the pushforward by the projection $\pi_x^c: \mathcal W^c(\mathcal W^u(x),r)\to \mathcal W^u(x)$ (for $r>0$ small enough) by sliding along the local center plaque. This means that it suffices to show that the system of weak-unstable leaf measures can be constructed uniformly in the Weyl chamber. First, notice that from \cite[Theorem A, $(4)$]{CarHer} and the ergodicity of the equilibrium measure, one sees that for a fixed $A\in \mathcal W$, the system of weak-unstable leaf measures $m^{cu}_{x,A}$ is unique up to a (global) constant rescaling. In particular, if we fix a section $\phi_0\in \mathrm{Con}^+$, it suffices to show that given any two $A_1,A_2\in \mathcal W$, one has $m^{cu}_{x,A_1}=m^{cu}_{x,A_2}$ under the normalization condition $m^{cu}_{x,A_1}(\phi_0)=m^{cu}_{x,A_2}(\phi_0)$. 
We recall the following lemma from \cite[Lemma 2.1]{CarHer}.
\begin{lemm}
\label{lemm:CarHer}
Let $\mathcal A\subset \mathrm{Meas}^{cu}$ be such that
\begin{itemize}
\item For any $\phi\in \mathrm{Con}$, there exists  a constant $c(\phi)>0$ such that for any $ \mu \in \mathcal A$, one has $\mu(\phi)\leq c(\phi)$.
\item For any $\phi\in \mathrm{Con}^+$, there exists  a constant $c'(\phi)>0$ such that for any $ \mu \in \mathcal A$, one has $\mu(\phi)\geq c'(\phi)$.
\end{itemize}
Then $\mathcal A$ is compact and does not contain the zero section.
\end{lemm}
The starting point of their argument consists in finding a reference measure $\nu$ which is \emph{appropriate}. This means the following.
\begin{itemize}
\item  It is strongly absolutely continuous, i.e there is $\delta_0>0$ and a continuous map $J:\{(x,y,z)\mid x\in M, \ y\in \mathcal W^s(x,\delta_0), z\in \mathcal W^{cu}(y,\delta_0)\}\to \mathbb R$ such that 
$$ (\mathrm{Hol}_{s}^\delta)_*\nu_x=J(x,y, \cdot) \nu_y$$
where $\mathrm{Hol}_s^\delta$ is the holonomy transport along local $\delta$-stable leaves, see \cite[Definition 2.3]{CarHer}.
\item It has full support in each weak-unstable leaf.
\item It is quasi-invariant with Hölder continuous Jacobian with respect to $\varphi^{A_i}_1$ for $i=1,2$. This means that the pushforward measure $(\varphi_1^{A_i})_*\nu_x$ is equivalent to $\nu_{\varphi_1^{A_i}(x)}$ with Hölder continuous density. The density will be referred to as the Jacobian.
\end{itemize}
We will use \cite[Proposition 2.3]{CarHer} which states that the restriction of the Lebesgue measure on each weak-unstable leaf is strongly absolutely continuous. We will write $\mathrm{Leb}$ for this system of measures. It is clear that it has full support in each leaf. Note that the last condition, the quasi-invariance, depends on the partially hyperbolic diffeomorphism (while the first two only depend on the Anosov splitting) and we will thus be a little more explicit about~it.

For $i=1,2$, the pushforward measure $(\varphi_1^{A_i})_*\mathrm{Leb}|_{\mathcal W^{cu}(x)}$ is equivalent to $\mathrm{Leb}|_{\mathcal W^{cu}(\varphi_1^{A_i}(x))}$. Moreover, the associated Jacobian is given by $x\mapsto |\mathrm{det}(d\varphi_1^{A_i}|_{E_u(x)})|$ and is Hölder continuous. This means that $\mathrm{Leb}$ is an appropriate measure with respect to $\varphi_1^{A_i}$ for $i=1,2$.

In the following, we will fix $(C_0,\alpha)$ such that $x\mapsto \ln|\mathrm{det}(d\varphi_1^{A_i}|_{E_u(x)})|$ for $i=1,2$ are both $(C_0,\alpha)$-Hölder continuous. That is, for any $x,y\in \mathcal M$ and $i=1,2$, one has
\begin{equation}
\label{eq:Holder} \big|\ln|\mathrm{det}(d\varphi_1^{A_i}|_{E_u(x)})|-\ln|\mathrm{det}(d\varphi_1^{A_i}|_{E_u(y)})| \big|\leq C_0 d(x,y)^{\alpha}.
\end{equation}
We fix $\phi_0\in \mathrm{Con}^+$ and define the following set
\begin{equation}
\label{eq:Chi}
\mathcal X:=\overline{\mathrm{Conv}\Big\{\nu^{n,m}:=\frac{(\varphi_1^{nA_1+mA_2})_*\mathrm{Leb}}{(\varphi_1^{nA_1+mA_2})_*\mathrm{Leb}(\phi_0)}\mid (n,m)\in \mathbb N\times \mathbb N\Big\}}\subset \mathrm{Meas}^{cu}
\end{equation}
where $\mathrm{Conv}(X)$ denotes the convex hull of a set $X$ and $\overline X$ its closure.

\textbf{The set $\mathcal X$ is compact.}
We show that $\mathcal X$ satisfies the hypothesis of Lemma \ref{lemm:CarHer} and is hence compact. We will then construct the system of leaf measure $m^{cu}$ as a fixed point using the Schauder-Tychonoff fix point theorem. We prove the following :
\begin{lemm}
\label{lemm1}
Let $\psi \in \mathrm{Con}^+$ and $\phi \in \mathrm{Con}$. Then there exists $e(\psi,\phi)>0$ such that \begin{equation}
\label{eq:compact}
\forall (n,m)\in \mathbb N^2, \quad \frac{\nu^{n,m}(\phi)}{\nu^{n,m}(\psi)} \leq e(\psi,\phi).
\end{equation}
As a consequence, $\mathcal X\subset \mathrm{Meas}$ is a compact subset.
\end{lemm}
\begin{proof}
This corresponds to a slight adaptation of the combination of \cite[Lemma 2.5, Lemma 2.6 and Corollary 2.7]{CarHer}. We recall that two sections $\psi_1,\psi_2 \in \mathrm{Con}$ are said to be $\delta$-equivalent if $\mathrm{Supp}(\psi_1)$ and $\mathrm{Supp}(\psi_2)$ are homeomorphic  via $\mathrm{Hol}_s^\delta$ and for any $x\in \mathrm{Supp}(\psi_1)$, one has $\psi_2(\mathrm{Hol}^{\delta}_sx)=\psi_1(x)$. We will fix a $\delta>0$ and drop it in the notation for the following computations. 
\\
We first show that there exists $D_1>0$ and $\ell :\mathbb R_+\to \mathbb R_+$ such that for any $\delta>0$, any $\psi_1,\psi_2 \in \mathrm{Con}^+$ which are $\delta$-equivalent and any $(n,m)\in \mathbb N^2$, 
\begin{equation}
\label{eq:techn}
\nu^{n,m}(\psi_1)\leq \ell(\delta)\cdot e^{D_1\delta^{\alpha}}\nu^{n,m}(\psi_2),
\end{equation}
where $\alpha\in \mathbb R_+$ is defined in \eqref{eq:Holder}.
We note that $\mathcal C:=\{t_1A_1+t_2A_2\mid t_1,t_2\geq 0\}\subset \mathcal W$ is a proper subcone of the Weyl chamber. In particular, since the constants appearing in the Anosov property \eqref{eq:s} can only diverge near the boundary of $\mathcal W$, one can find uniform Anosov constants $C_1, \nu>0$ for the whole cone $\mathcal C$.

We will write $h_{n,m}=\tfrac{d \nu^{n,m}}{d\mathrm{Leb}}$ for the density of $\nu^{n,m}$. Now, we compute for $\psi_1,\psi_2 \in \mathrm{Con}^+$ which are $\delta$-equivalent and any $(n,m)\in \mathbb N^2$, 
\begin{align*}\nu^{n,m}(\psi_1)&=\int_{\mathcal M} \psi_1(x) h_{n,m}(x)d\mathrm{Leb}(x)=\int_{\mathcal M} (\psi_2\circ \mathrm{Hol}_s(x)) h_{n,m}(x)d\mathrm{Leb}(x)
\\&=\int_{\mathcal M} \frac{h_{n,m}(x)}{h_{n,m}(\mathrm{Hol}_s(x))}(\psi_2\circ \mathrm{Hol}_s(x))h_{n,m}(\mathrm{Hol}_s(x)){d\mathrm{Leb}}(x)
\\&\leq \sup_x \left|\frac{h_{n,m}(x)}{h_{n,m}(\mathrm{Hol}_s(x))}\right|\int_{\mathcal M}\psi_2(x) h_{n,m}(x)d((\mathrm{Hol}_s)_*\mathrm{Leb})(x)
\end{align*}
which then gives 
\begin{equation}
\label{eq:ineq}
\nu^{n,m}(\psi_1)\leq \sup_x \left|\frac{h_{n,m}(x)}{h_{n,m}(\mathrm{Hol}_s(x))}\right| \times \sup_x\left|\frac{d((\mathrm{Hol}_s)_*\mathrm{Leb})(x)}{d\mathrm{Leb}(x)}\right|\times \nu^{n,m}(\psi_2).
\end{equation}
We first consider $\ell$ to be an upper bound of $\tfrac{d((\mathrm{Hol}_s)_*\mathrm{Leb})(x)}{d\mathrm{Leb}(x)}$. Next, we use the chain rule, the Hölder continuity \eqref{eq:Holder} as well as the Anosov property \eqref{eq:s} to obtain
\begin{align*}\left|\frac{h_{n,m}(x)}{h_{n,m}(\mathrm{Hol}_sx)}\right|&\leq  \prod_{i=1}^n\frac{|\mathrm{det}(d\varphi_1^{A_1}|_{E_u(\varphi_{1}^{(i-1)A_1}x)})|}{|\mathrm{det}(d\varphi_1^{A_1}|_{E_u(\varphi_{1}^{(i-1)A_1}\mathrm{Hol}_s(x))})|} \prod_{j=1}^m\frac{|\mathrm{det}(d\varphi_1^{A_2}|_{E_u(\varphi_{1}^{nA_1+(j-1)A_2}x)})|}{|\mathrm{det}(d\varphi_1^{A_2}|_{E_u(\varphi_{1}^{nA_1+(j-1)A_2}\mathrm{Hol}_s(x))})|} 
\\&\leq\exp\left(C_0\sum_{i=1}^nd\big(\varphi_{1}^{(i-1)A_1}x,\varphi_{1}^{(i-1)A_1}\mathrm{Hol}_s(x)\big)^\alpha \right)
\\&\times\exp\left(C_0\sum_{j=1}^md\big(\varphi_{1}^{nA_1+(j-1)A_2}x,\varphi_{1}^{nA_1+(j-1)A_2}\mathrm{Hol}_s(x)\big)^\alpha \right)
\\&\leq \exp\left(C_0\sum_{n\geq 0}C_1^{\alpha}e^{-\eta \nu \alpha n}\delta^{\alpha} \right)=:\exp(D_1\delta^{\alpha}).
\end{align*}
Plugging this last estimate into \eqref{eq:ineq} yields \eqref{eq:techn}. Now, we follow the proof of \cite[Lemma 2.6]{CarHer}. Given $x_1,x_2\in \mathcal M$, $X_1\subset \mathcal W^{cu}(x_1)$ and $X_2\subset \mathcal W^{cu}(x_2)$ two open and pre-compact sets, we show that there is a constant $\hat e(X_1,X_2)>0$ such that
\begin{equation}
\label{eq:nn}
\forall n,m\geq 0,\quad \frac{1}{\hat e(X_1,X_2)}\leq \frac{\nu^{n,m}(X_1)}{\nu^{n,m}(X_2)}\leq \hat e(X_1,X_2).
\end{equation}
We first use Assumption \ref{assumption} and more precisely that the stable foliation is minimal to deduce the following property. For any $x_0\in \mathcal M$ and any $A\subset \mathcal W^{cu}(x_0)$, there are $\delta(A),r(A)>0$ such that for any $x\in \mathcal M$, one can find $B_x\subset A$ which is $\delta(A)-$equivalent to $\mathcal W^{cu}(x,r(A))$. Here, $\mathcal W^{cu}(x,r(A))=\{y\in \mathcal W^{cu}(x)\mid d_{\mathcal W^{cu}(x)}(x,y)<r(A)\}$ denotes the local center-stable manifold and the distance is the one induced by the Riemannian metric on the leaves. Using the relative compactness of $X_1$, one can write $\overline{X_1}\subset \cup_{i=1}^m\mathcal W^{cu}(x_j,r(X_2))$ where each $\mathcal W^{cu}(x_j,r(X_2))$ is $\delta(X_2)$-equivalent to some $B_j\subset X_2$. Approximating characteristic functions by smooth functions and using \eqref{eq:techn} yields for any $j=1, \ldots, m$ (see \cite[Lemma 2.6]{CarHer} for the details):
$$\frac{\nu^{n,m}(\mathcal W^{cu}(x_j,r(X_2)))}{\nu^{n,m}(B_j)}\leq \ell(\delta(X_2))e^{D_1	\delta(X_2)^\alpha}:=M. $$
This finally gives
$$ \frac{\nu^{n,m}(X_1)}{\nu^{n,m}(X_2)}\leq \sum_{j=1}^m\frac{\nu^{n,m}(\mathcal W^{cu}(x_j,r(X_2)))}{\nu^{n,m}(X_2)}\leq m\max_{j=1}^n\frac{\nu^{n,m}(\mathcal W^{cu}(x_j,r(X_2)))}{\nu^{n,m}(B_j)}\leq mM.$$
Eventually, we are able to deduce \eqref{eq:nn} by exchanging the roles of $X_1$ and $X_2$. We now prove Lemma \ref{lemm1}. Since $\psi$ is non-negative and non identically zero, there is a $r>0$ such that $A_r:=\psi^{-1}(r,+\infty)$ is relatively open and pre-compact. Choose $A$ open and relatively compact containing the support of $\phi$, then using \eqref{eq:nn} for $A_r$ and $A$, we obtain
$$\frac{\nu^{n,m}(\phi)}{\nu^{n,m}(\psi)} \leq \frac{\|\phi\|_{\infty}\nu^{n,m}(A)}{r\nu^{n,m}(A_r)}\leq \frac{\|\phi\|_{\infty}}{r}\hat e(A,A_r).  $$

 We now show that $\mathcal X$ is compact, for this, fix  any $\phi\in \mathrm{Con}$. Then apply \eqref{eq:compact} with $\psi=\phi_0$ to obtain the first condition of Lemma \ref{lemm:CarHer}. Now fix any  $\eta\in \mathrm{Con}^+$ and apply \eqref{eq:compact} with $\phi=\phi_0$ and $\psi=\eta$ to obtain the second condition of Lemma \ref{lemm:CarHer} and the compactness of $\mathcal X$.
\end{proof}
\textbf{Constructing the common system.}
 Consider the following continuous mapping
$$S: \mathcal X \to \mathcal X, \quad S(\eta):=\frac{(\varphi_{1}^A)_*\eta}{(\varphi_{1}^A)_*\eta(\phi_0)}.$$
We see that $\mathcal X$ is invariant under $S$ so by the Schauder-Tychonoff fix point theorem, $S$ has a fix point $\mu$:
\begin{equation}
\label{eq:fixpoint}
\exists \mu \in \mathcal X, \ S(\mu)=\mu \iff (\varphi_{1}^{A_1})_*\mu=e^{\lambda}\mu, \ \lambda \in \mathbb R.
\end{equation}
Now, $\mu$ is quasi-invariant with Jacobian $e^\lambda$ and is in the closure of the positive cone generated by $\{\nu^{n,m}\}_{n,m\geq0}$ (thus has full support in each leaf). This means one can use \eqref{eq:techn} to adapt the proof of \cite[Lemma 2.8]{CarHer} and for any $\epsilon>0$, there is a $\delta>0$ such that for any $\psi_1,\psi_2\in \mathrm{Con}^+$ that are $\delta$-equivalent, one has $|\mu(\psi_1)/\mu(\psi_2)-1|<\epsilon. $ This is the only thing we need to adapt the proof of \cite[Proposition 2.9]{CarHer} which shows that $\mu$ is strongly absolutely continuous. In other words, the measure $\mu$ is appropriate for $\varphi_1^{A_i}$ for $i=1,2$. 
This in turn means that one can apply the results of \cite[Section $3$]{CarHer} to $\mu$. In particular, using \cite[Proposition 3.19]{CarHer}, we see that the Jacobian is actually given by $\lambda=h_{\mathrm{top}}(\varphi_1^{A_1})$. To summarize, starting from an appropriate measure, one can construct another appropriate measure which is quasi-invariant with Jacobian given by the exponential of the topological entropy of the partially hyperbolic diffeomorphism.

We then consider the following subset of $\mathcal X$
\begin{equation}
\label{eq:Y}
\mathcal Y:=\overline{\mathrm{Conv}\Big\{\alpha^n:=\frac{(\varphi_n^{A_2})_*\mu}{(\varphi_n^{A_2})_*\mu(\phi_0)}\mid n\in \mathbb N\Big\}}\subset \mathcal X.
\end{equation}
The space $\mathcal Y$ is compact as a closed subset of $\mathcal X$. Since $\mu$ was shown to be appropriate, reapplying the argument above (or directly \cite[Section 2]{CarHer}) shows that 
\begin{equation}
\label{eq:fixpoint2}
\exists \beta \in \mathcal Y, \  (\varphi_{1}^{A_2})_*\beta=e^{h_{\mathrm{top}}(\varphi_1^{A_2})}\beta.
\end{equation}
However, note that any element $\theta$ of $\mathcal Y$ satisfies $(\varphi_{1}^{A_1})_*\theta=e^{h_{\mathrm{top}}(\varphi_1^{A_1})}\theta$ since the two flows commute. This means that  $\beta$ is an appropriate measure such that
$$(\varphi_{1}^{A_1})_*\beta=e^{h_{\mathrm{top}}(\varphi_1^{A_1})}\beta, \quad (\varphi_{1}^{A_2})_*\beta=e^{h_{\mathrm{top}}(\varphi_1^{A_2})}\beta. $$
Using  \cite[Theorem A, $(4)$]{CarHer}, we see that any system of measures $m^{cu}_{A_1}$ (resp. $m^{cu}_{A_2}$) is obtained as some constant rescalling of $\beta$.
In other words, we have shown that one has $m^{cu}_{A_1}=\beta=m^{cu}_{A_2}$ for any two $A_1,A_2\in \mathcal W$ if the leaf measures are normalized such that $m^{cu}_{A_i}(\phi_0)=1$ for $i=1,2$. This concludes the proof of \eqref{eq:phit}.

\textbf{The topological entropy is linear in the Weyl chamber.}
We deduce that the entropy mapping $A\mapsto h_{\mathrm{top}}(\varphi_1^A)$ is linear in the Weyl chamber $\mathcal W$. Indeed, let $A_1,A_2 \in \mathcal W$ and $\lambda_1,\lambda_2\geq 0$, then 
\begin{align*}
(\varphi_t^{\lambda_1A_1+\lambda_2A_2})_*m^{cu}&=e^{th_{\mathrm{top}}(\varphi_1^{\lambda_1A_1+\lambda_2A_2})}m^{cu}=(\varphi_t^{\lambda_1A_1})_*(\varphi_t^{\lambda_2A_2})_*m^{cu}
\\&=e^{t(\lambda_1 h_{\mathrm{top}}(\varphi_1^{A_1})+\lambda_2 h_{\mathrm{top}}(\varphi_1^{A_2}))}m^{cu}.
\end{align*}
And thus, we obtain $h_{\mathrm{top}}(\varphi_1^{\lambda_1A_1+\lambda_2A_2})=\lambda_1 h_{\mathrm{top}}(\varphi_1^{A_1})+\lambda_2 h_{\mathrm{top}}(\varphi_1^{A_2}).$
\end{proof}
In the rest of the section, we show that the common measure of maximal entropy $m$ is invariant under any $\varphi^1_A$ for $A\in \mathbf a$. The authors would like to thank Pablo Carrasco for pointing this to him.
\begin{prop}
\label{propinv}
The measure $m$ constructed in Proposition \ref{uniform} satisfies
\begin{equation}
    \label{eq:invariance}
    \forall A\in \mathbf a, \quad (\varphi_1^A)_*m=m.
\end{equation}
\end{prop}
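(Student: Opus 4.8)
The plan is elementary: I would deduce invariance under an arbitrary $A\in\mathbf{a}$ from invariance under elements of the positive Weyl chamber, using that the differences of elements of $\mathcal W$ exhaust $\mathbf{a}$ together with the fact that the maps $\varphi_1^A$ commute.

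First I would record what Proposition \ref{uniform} already provides. For every $A\in\mathcal W$ the measure $m$ coincides with the unique measure of maximal entropy $m_A$ of the partially hyperbolic diffeomorphism $\varphi_1^A$; in particular $m\in\mathcal P_{\varphi_1^A}(\mathcal M)$, so $(\varphi_1^A)_*m=m$, and since $\varphi_1^A$ is a diffeomorphism this also yields $(\varphi_{-1}^A)_*m=((\varphi_1^A)^{-1})_*m=m$. Thus $m$ is invariant under both $\varphi_1^A$ and $\varphi_{-1}^A$ for every $A\in\mathcal W$.

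Next I would observe that every $A\in\mathbf{a}$ can be written as $A=A_1-A_2$ with $A_1,A_2\in\mathcal W$. Indeed, fix any $B\in\mathcal W$; since $\mathcal W$ is open and $t^{-1}A+B\to B\in\mathcal W$ as $t\to+\infty$, there is $t_0>0$ with $t_0^{-1}A+B\in\mathcal W$, whence $A+t_0B=t_0(t_0^{-1}A+B)\in\mathcal W$ because $\mathcal W$ is a cone. Then take $A_1:=A+t_0B\in\mathcal W$ and $A_2:=t_0B\in\mathcal W$, so that $A=A_1-A_2$.

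Finally I would combine the two steps. Because $\mathbb A$ is abelian and $\tau$ is a homomorphism, $\varphi_1^{A_1}=\tau(\exp(A))\tau(\exp(A_2))=\varphi_1^{A}\circ\varphi_1^{A_2}$, i.e.\ $\varphi_1^A=\varphi_1^{A_1}\circ\varphi_{-1}^{A_2}$, and therefore
$$(\varphi_1^A)_*m=(\varphi_1^{A_1})_*(\varphi_{-1}^{A_2})_*m=(\varphi_1^{A_1})_*m=m,$$
the last two equalities being the invariances recorded in the first step, applied to $A_2\in\mathcal W$ and $A_1\in\mathcal W$ respectively. This is exactly \eqref{eq:invariance}. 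There is no real obstacle here: the only non-routine point is the elementary observation that $\mathcal W-\mathcal W=\mathbf{a}$ for the open convex cone $\mathcal W$, after which commutativity of the action does the rest.
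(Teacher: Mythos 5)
Your proposal is correct, and it takes a genuinely different and in fact more elementary route than the paper's. The paper argues at the level of the leaf measures: using commutativity, $(\varphi_1^A)_*m^u$ is again a system of unstable leaf measures satisfying the same scaling relation, so by the uniqueness statement of Carrasco--Rodriguez-Hertz (\cite[Theorem A, (4)]{CarHer}) it must be a constant multiple $c_u m^u$, and likewise $(\varphi_1^A)_*m^{cs}=c_s m^{cs}$; one then deduces $(\varphi_1^A)_*m=c_uc_sm$ and normalization forces $c_uc_s=1$. You instead work directly at the level of the product measure $m$: since Proposition~\ref{uniform} already gives that $m$ is simultaneously the MME of $\varphi_1^A$ for \emph{every} $A\in\mathcal W$, it is $\varphi_1^{\pm A}$-invariant for all $A\in\mathcal W$; combined with the elementary observation that $\mathcal W-\mathcal W=\mathfrak a$ (valid for any open cone) and the commutativity of the flows, this immediately yields invariance under $\varphi_1^A$ for all $A\in\mathfrak a$. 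The trade-off: your argument is shorter and self-contained given the statement of Proposition~\ref{uniform}, while the paper's argument additionally records the quasi-invariance of the individual leaf measures $m^u$, $m^{cs}$ under $\varphi_1^A$ for general $A$ (with reciprocal Jacobians), a slightly stronger piece of structural information than bare invariance of the product. For proving the proposition as stated, your route is perfectly adequate.
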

\begin{proof}
    Let $A\in\mathbf a $. Since the action is Abelian, we see that $\varphi^{A_0}_1$ permutes the unstable leaves, i.e, 
    $$\forall x\in \mathcal M,\quad \varphi^{A_0}_1\left(\mathcal W^u(x) \right)=\mathcal W^u(\varphi^{A_0}_1(x)). $$
    This means that if $m^u$ is the system of unstable measures constructed in Proposition \ref{uniform}, $\tilde m^u:=(\varphi^1_A)_*m^u$ is a system of unstable measures which satisfies
    or any $x\in \mathcal M$ and any $A\in \mathcal W$
\begin{equation}
\label{eq:newmu}
(\varphi_1^{A})_*\tilde m^u_{x}=(\varphi_1^{A_0})_*(\varphi_1^{A})_*\tilde m^u_x=e^{-h_{\mathrm{top}}(\varphi_A^1)}\tilde m^u_{\varphi_1^Ax},   \end{equation}
where we used that the flows commute and \eqref{eq:phit}. Since $\tilde m^u$ is also fully supported in each unstable leaf, \cite[Theorem A, $(4)$]{CarHer} and the ergodicity of the equilibrium measure imply that there exists $c_u>0$ such that $\tilde m^u=(\varphi^1_A)_*m^u=c_um^u$. Similarly, there is a constant $c_s>0$ such that $(\varphi_1^{A_0})_*m^{cs}=c_sm^{cs}$. Now, this means that $$(\varphi_1^{A_0})_*m=c_uc_s(m^u \wedge m^{cs})=c_uc_sm.$$
Using that $\varphi_1^{A_0}(\mathcal M)=\mathcal M$ and the fact that $m$ is a probability measure finally gives
$1=[(\varphi_1^{A_0})_*m](\mathcal M)=c_uc_s m(\mathcal M)=c_uc_s. $ This shows that $m$ is invariant by $\varphi_1^{A_0}.$
\end{proof}
\section{Ruelle-Taylor resonances for the action on $d_s$-forms}
\subsection{Ruelle-Taylor resonances}
In this subsection, we recall the main features of Ruelle-Taylor resonances. They are a generalization of Ruelle resonances to the higher rank case and were first introduced by Guedes Bonthonneau, Guillarmou, Hilgert and Weich in \cite{GBGHW}. We refer to this paper for details on the construction as we will only state the important properties needed for our work. First, we consider the bundle of $d_s$-form in the kernel of the contraction by the center direction:
\begin{equation}
\label{eq:E_0^m}
\mathscr E_0^{d_s}:=\{\omega \in C^{\infty}(\mathcal M; \Lambda^{d_s}T^*\mathcal M)\mid \iota_{X_A}\omega=0, \forall A\in \mathfrak a\}.
\end{equation}
We recall that $d_s$ is the dimension of the stable foliation. On this bundle, we consider 
\begin{equation}
\label{eq:X}
\mathbf{X}:\mathfrak a\to \mathrm{Diff}(\mathcal M; \mathscr E_0^{d_s}), \ \mathbf{X}_A\omega:= \mathcal L_{X_A}\omega,
\end{equation}
where $\mathrm{Diff}(\mathcal M; \mathscr E_0^{d_s})$ denotes the space of differential operators acting on sections of $\mathscr E_0^{d_s}$.
This defines an admissible lift in the sense that it satisfies a Leibniz rule:
$$\forall f\in C^{\infty}(\mathcal M), \ \forall \omega \in \mathscr E_0^{d_s}, \quad \mathbf{X}_A(f\omega)=(X_A f)\omega+f\mathbf{X}_A \omega.$$
Thereafter, we will write $\mathcal D' (\mathcal M; \Lambda^{k}(E_u^*\oplus E_s^*))$ for the space of sections of currents of degree $d_s+d_u-k$ which are cancelled by the contraction $\iota_{X_A}$ for any $A\in \mathcal W$. They can be thought as linear combinations of elements of $\Lambda^{k}(E_u^*\oplus E_s^*)$ with distributional coefficients. We introduce a useful (Hölder continuous) splitting:
\begin{equation}
\label{eq:split2}
\Lambda^{q}(E_u^*\oplus E_s^*)=\bigoplus_{k=0}^{q}\big(\Lambda^kE_s^*\otimes \Lambda^{q-k}E_u^*\big)=:\bigoplus_{k=0}^{q} \Lambda_k^{q}.
\end{equation}

We now define {Ruelle-Taylor resonances} for the action on $d_s$-forms. They correspond to joint eigenvalues of the $X_A$ for $A\in \mathcal W$ on the space of distributions with wavefront set contained in $E_u^*$.
\begin{defi}
\label{cor}
We say that $\lambda \in \mathfrak a_{\mathbb C}^*$ is a Ruelle-Taylor resonance if and only if
\begin{equation}
\label{eq:vp}
\exists u\in \mathcal D'(M;\Lambda^{d_s}(E_s^*\oplus E_u^*))\setminus\{0\},\ \mathrm{WF}(u)\subset E_u^*,\ \forall A\in \mathcal W, \quad -\mathbf{X}_Au=\lambda(A) u.
\end{equation}
\end{defi}
We will write $\mathrm{Res}_{\mathbf{X}}^{d_s}$ for the set of Ruelle-Taylor resonances of the action on $d_s$-forms. The set $\mathrm{Res}_{\mathbf{X}}^{d_s}$ was shown to be discrete in \cite[Theorem 1]{GBGHW} and the corresponding spaces of joint (generalized) eigenfunctions are finite dimensional. Changing $-\mathbf{X}_A$ to $\mathbf{X}_A$ and replacing $E_u^*$ by $E_s^*$ in the wavefront set condition, we obtain the definition of a co-resonant state.
\begin{equation}
v\in \mathcal D'(M;\Lambda^{d_u}(E_s^*\oplus E_u^*))\setminus\{0\}, \mathrm{WF}(v)\subset E_s^*,\ \forall A\in \mathcal W, \ \ \mathbf{X}_Av=\lambda(A) v.
\end{equation}

The idea of \cite{GBGHW} (already present in \cite{BKL,BL,BT,GL,Fau08,Fau10} for the rank one case) was to study the joint spectral theory of the $(-\mathbf{X}_A)_{A\in \mathcal W}$ on specially designed functional spaces called \emph{anisotropic Sobolev spaces} $\mathcal H^{NG}$. Their precise construction, which we only sketch below, can be found in \cite[Section 4.1]{GBGHW}.

The anisotropic spaces $\mathcal H^{NG}$ are constructed using an \emph{anisotropic order function} $G$ which has to satisfy certain dynamical properties. The order function $G\in C^{\infty}(T^*\mathcal M)$ is homogeneous in the $\xi$ variable outside a compact set in $\xi$, it is negative in a conic neighborhood of $E_u^*$ and positive outside a larger conic neighborhood of $E_u^*$. Most importantly, it decreases along the trajectories of the symplectic lift $e^{tX_A^H}$ of $\varphi_t^A$ (see \cite[Definition 4.1]{GBGHW} for more precise statements). Using a quantization procedure $\mathrm{Op}$ (see \cite{Zw} for instance), we get elliptic pseudo-differential operators $\mathrm{Op}(e^{NG})$ (with $N\geq 0$) with variable order which can be inverted up to changing the operators to a lower order term. The anisotropic Sobolev spaces are defined to be $\mathcal H^{NG}:=\mathrm{Op}(e^{NG})^{-1}L^2(\mathcal M)$. Even though the construction of the Ruelle-Taylor resonances use these rather complicated functional spaces, they are only auxiliary tools and the different objects are independent of the particular choices made throughout the proof as seen in definition \eqref{eq:vp}.

In the rest of this subsection, we will prove that the leaf measure $m^s$ is a Ruelle-Taylor resonant state for the Ruelle-Taylor resonance $h_{\mathrm{top}}^{\mathcal W}$. Recall that using Theorem \ref{Theo1}, we can define $h_{\mathrm{top}}^{\mathcal W}\in \mathfrak a_\mathbb C^*$ such that $h_{\mathrm{top}}^{\mathcal W}(A)=h_{\mathrm{top}}(\varphi_1^A)$ for $A\in \mathcal W$ and extended by linearity in the rest of $\mathfrak a$.

\begin{prop}[Leaf measures are resonant states]
\label{muv}
The system of leaf measures $m^s$ $($resp. $m^u)$ from Proposition \ref{uniform} defines a section of $\mathcal D' (\mathcal M; \Lambda^{d_s}(E_u^*\oplus E_s^*))$ $($resp.  $\mathcal D' (\mathcal M; \Lambda^{d_u}(E_u^*\oplus E_s^*))$ which we will still denote by $m^s$ $($resp. $m^u)$. Moreover, one has
\begin{equation}
\label{eq:eee}
\begin{split}
&\mathrm{WF}(m^s)\subset E_u^*,\ \forall A\in \mathfrak a, \quad -\mathbf{X}_Am^s=h_{\mathrm{top}}^{\mathcal W} (A) m^s,
\\
&\mathrm{WF}(m^u)\subset E_s^*,\ \forall A\in \mathfrak a, \quad \mathbf{X}_Am^u=h_{\mathrm{top}}^{\mathcal W} (A) m^u.
\end{split}
\end{equation}
Hence $m^s$ $($resp. $m^u)$ is a resonant state $($resp. co-resonant state$)$ associated to the Ruelle-Taylor resonance $h_{\mathrm{top}}^{\mathcal W}$, which we will call the first resonance.

Moreover, for any $1\leq k\leq d_s$ and $\omega_k \in C^{0}(\mathcal M; \Lambda_k^{d_s})$, one has $m^u(\omega_k)=0$.
\end{prop}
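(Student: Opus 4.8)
The plan is to verify each assertion in turn, treating the identification of $m^s$ (resp. $m^u$) as a current, the wavefront set bound, the eigenvalue equation, and the vanishing statement as separate steps, since each uses a different ingredient from Section~2.

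\textbf{Step 1: $m^s$ defines a $d_s$-current with the stated wavefront set.} Because the unstable foliation $\mathcal W^u$ is orientable (Assumption~\ref{assumption}) and has smooth leaves of dimension $d_u$, a transverse system of leaf measures along $\mathcal W^u$ can be paired with a smooth $d_u$-form that is a section of $\Lambda^{d_u}T^*\mathcal M$ in the kernel of every $\iota_{X_A}$: fixing a local foliation chart in which $\mathcal W^u$ appears as a family of $d_u$-dimensional plaques, one integrates the restriction of the form to each plaque against $m^u_x$ and then integrates the result over the transverse coordinates against a reference (e.g.\ Lebesgue) density. This defines $m^u$ as an element of $\mathcal D'(\mathcal M;\Lambda^{d_u}(E_s^*\oplus E_u^*))$; the compatibility condition $m^u_x=m^u_{x'}$ for $x\sim_u x'$ makes the definition independent of the chart up to the usual partition-of-unity argument, and strong absolute continuity of the reference measure (the "appropriate measure" property established in Proposition~\ref{uniform}) guarantees the pairing is continuous on $C^\infty$ sections. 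The symmetric construction along $\mathcal W^{cs}$ then gives $m^s$ as a $d_s$-current paired with sections of $\mathscr E_0^{d_s}$. The wavefront bound $\WF(m^u)\subset E_s^*$ follows from the standard observation that a distribution which is smooth (indeed, given by a smooth leafwise density) along the leaves of a smooth foliation tangent to $E_u\oplus E_0$ has wavefront set contained in the conormal of that foliation, namely $E_s^*$; symmetrically $\WF(m^s)\subset E_u^*$.

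\textbf{Step 2: the eigenvalue equation.} This is the transcription of the scaling identities \eqref{eq:phit} into infinitesimal form. From $m^u_{\varphi_1^A x}=e^{h_{\mathrm{top}}(\varphi_1^A)}(\varphi_1^A)_*m^u_x$ one gets, for $A\in\mathcal W$, the pullback relation $(\varphi_t^A)^*m^u=e^{-t\,h_{\mathrm{top}}^{\mathcal W}(A)}m^u$ at the level of currents (using linearity of $h_{\mathrm{top}}^{\mathcal W}$ to handle all $t>0$, and the group property to extend to $t\in\mathbb R$); differentiating at $t=0$ gives $\mathcal L_{X_A}m^u=\mathbf{X}_A m^u=h_{\mathrm{top}}^{\mathcal W}(A)m^u$. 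Since both sides are linear in $A$ and $\mathcal W$ spans $\mathfrak a$, the identity extends to all $A\in\mathfrak a$; here one uses Proposition~\ref{propinv} — or rather the same rescaling argument that proved it — to make sense of and verify the relation for $A$ outside the open cone. The computation for $m^s$ is identical with the sign of the exponent reversed. Combined with Step~1 this gives exactly \eqref{eq:vp} and its co-resonant analogue, so $h_{\mathrm{top}}^{\mathcal W}\in\mathrm{Res}_{\mathbf X}^{d_s}$.

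\textbf{Step 3: vanishing of $m^u(\omega_k)$ for $1\le k\le d_s$.} Here $\omega_k\in C^0(\mathcal M;\Lambda_k^{d_s})$ lies in the summand $\Lambda^kE_s^*\otimes\Lambda^{d_s-k}E_u^*$ of the splitting \eqref{eq:split2}, so it contains at least one $E_s^*$ factor, i.e.\ at least one covector that annihilates $E_u\oplus E_0$. But $m^u$ is, leafwise, a $d_u$-density along $\mathcal W^u$, so pairing it with a form amounts to restricting that form to the unstable leaves (tangent to $E_u$, which is $d_u$-dimensional) and the center directions. A form with a factor in $E_s^*$ restricts to zero on $E_u\oplus E_0$; hence its leafwise pairing against $m^u$ vanishes pointwise in the transverse variable, and integrating gives $m^u(\omega_k)=0$. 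Equivalently: $m^u$ is a section of $\Lambda^{d_u}$ that, in the splitting $\Lambda^{d_u}(E_s^*\oplus E_u^*)=\bigoplus_j\Lambda_j^{d_u}$, has only a component in $\Lambda^{d_u}E_u^*$ (the $j=0$ piece), because the unstable leaf measures "see" only the $E_u$-directions; pairing with $\omega_k$, which has a nontrivial $E_s^*$-component, then produces the top-degree form $\Lambda^{d_s}E_s^*\otimes\Lambda^{d_s-k}E_u^*\wedge(\text{nothing in }E_s^*\text{ from }m^u)$, which cannot fill out $\Lambda^{d_s+d_u}(E_s^*\oplus E_u^*)$ unless $k=0$.

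\textbf{Main obstacle.} The genuinely delicate point is Step~1: making the passage from a merely Hölder-continuous foliation with smooth leaves to a well-defined current with a controlled wavefront set. One must be careful that the reference transverse measure and the leafwise measures fit together continuously — this is precisely where the strong absolute continuity and full-support properties from Proposition~\ref{uniform} (inherited from \cite{CarHer}) are needed — and that the wavefront estimate survives the only-Hölder regularity of the foliation; the cleanest route is to invoke the regularity theory of such invariant distributions already set up in \cite{GBGHW}, observing that $m^u$ (resp.\ $m^s$) manifestly satisfies the defining transport equation with the right wavefront localization and is therefore, a posteriori, one of the finite-dimensional family of resonant states. Steps 2 and 3 are then essentially bookkeeping.
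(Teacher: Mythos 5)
The proposal has a genuine gap in Step~1, and it is the central one: you never introduce the weight $e^{h_{\mathrm{top}}^{\mathcal W}(A(y,x))}$ coming from the multi-dimensional Bowen bracket, and this is precisely what the paper's definition of the current $m^u$ hinges on. In the paper, $m^u(\varphi)$ is defined on a local rectangle by integrating the test form $\alpha\wedge\varphi$ (not a reference Lebesgue density) over each center-stable slice $R_q^{cs}(x)$, weighted by $e^{h_{\mathrm{top}}^{\mathcal W}(A(y,x))}$, and then integrating the result against $dm^u_q(x)$ along the unstable direction. Your construction, which replaces the transverse density by a fixed Lebesgue reference density, produces a different object: it fails the compatibility across overlapping rectangles (the leaf measures $m^u_x$ only transform under the flow with the entropy factor, so without the Bowen-time weight the local definitions don't patch) and, more seriously, it would not satisfy the eigenvalue equation in Step~2, since a Lebesgue transverse density is not an eigenvector of $(\varphi_t^A)^*$. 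Your Step~2 presupposes the pullback identity $(\varphi_t^A)^*m^u = e^{-th_{\mathrm{top}}^{\mathcal W}(A)}m^u$ at the level of currents, but that identity is exactly what has to be verified from the definition; the paper does this directly using the invariance $A(\varphi_t^{A_0}y,\varphi_t^{A_0}x)=A(y,x)$ of the Bowen bracket under the flow and the invariance of $\alpha$, neither of which appears in your argument.

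The wavefront bound is also not a "standard observation" here: the center-unstable foliation is only Hölder continuous, so the general principle that a leafwise-smooth distribution along a smooth foliation has wavefront set in the conormal does not apply directly. The paper handles this with an explicit stationary-phase / integration-by-parts argument in the $cs$-variable (following \cite[Lemma 3.2]{Hum}), exploiting that the integrand in the definition of $m^u(\varphi)$ is smooth along the weak-stable leaves uniformly in the unstable parameter. Your fallback — invoking the regularity theory of \cite{GBGHW} "a posteriori, observing that $m^u$ manifestly satisfies the transport equation with the right wavefront localization" — is circular, since the wavefront localization is precisely what needs to be established before $m^u$ can be recognized as a resonant state in the sense of \eqref{eq:vp}. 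Step~3 is essentially correct and matches the paper's one-line argument, but it relies on the pairing being defined as in the paper (so that an $E_s^*$-factor in $\omega_k$ annihilates the $cs$-direction where the test form lives), which brings you back to the unfixed Step~1.
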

\begin{proof}
We have to make sense of the pairing $m^u(\varphi)$ for $\varphi\in C^{\infty}(\mathcal M; \Lambda^{d_s}T^*\mathcal M)$. First, the compatibility statement on the different leaf measures $m^u_x$ allows us to only define the duality locally, so let us recall some facts on the \emph{local product structure} of the action. The Anosov decomposition \eqref{eq:split} integrates into $\mathcal W^{\bullet}$ for $\bullet=u,c,s$. Here, the central manifold $\mathcal W^c(x)$ corresponds to the orbit of $x$ under the Anosov action, that is 
$\mathcal W^c(x):=\{\varphi_1^A(x)\mid A \in \mathfrak a\}$. We can also define center-stable and center-unstable manifolds $\mathcal W^{cs}(x)$ and $\mathcal W^{cu}(x)$. Note that $\mathfrak a$ is equipped with its Lebesgue-Haar measure $dA$ which can be pushed forward to a $\kappa$-form on $\mathcal M$ using the injective Lie algebra homomorphism \eqref{eq:XLie}:
\begin{equation}
\label{eq:alpha1}
\alpha \in C^{\infty}(\mathcal M;\Lambda^\kappa T^*\mathcal M), \ \alpha:=X_*(dA),\quad \forall A\in \mathfrak a,\ \mathcal L_{X_A}\alpha=0.  
\end{equation}
 Since the Anosov decomposition is transverse and since $\mathcal M$ is compact, the local product structure (see for instance \cite[Chapter 4]{Pes}) assures that there exists $\delta_0>0$ small enough such that for any $\delta<\delta_0$ and $x\in \mathcal M$, 
$$\forall y\in \mathcal W^u(x,\delta),\ \forall z\in \mathcal W^s(x,\delta) ,\ \exists  A(z,y)\in \mathfrak a,  \quad \mathcal W^u(\varphi_1^{A(z,y)}z,\delta)\cap \mathcal W^s(y,\delta)\neq \emptyset .$$
Here, $\mathcal W^{\bullet}(x,\delta)$ denotes the ball of radius $\delta$ for the metric induced by the Riemannian metric. Moreover, if we require that $\|A(z,y)\|\leq \delta,$ then it is unique. The element $A(z,y)\in \mathfrak a$ is a multi-dimensional version of the Bowen time (defined in the classical rank one case, see \cite[Proposition 6.2.2]{FishHas} for instance). We can define the Bowen bracket  $[\varphi_1^{A(z,y)}z,y]$ to be the unique element of  $\mathcal W^u(\varphi_1^{A(z,y)}z,\delta)\cap \mathcal W^s(y,\delta)$.  We note that one has for any $ x,y\in \mathcal M$ which are {close} and $A_0\in \mathcal W$:
\begin{equation}
\label{eq:v}
A(\varphi_{t}^{A_0}y,\varphi_{t}^{A_0}x)=A(y,x),
\end{equation}
for any $t\geq 0$ such that the Bowen bracket is well defined.
The Bowen bracket map is defined to be
$$[\cdot,\cdot]: \mathcal W^{cu}(q,\delta)\times \mathcal W^s(q,\delta)\to \mathcal M,\ (x,y)\mapsto [x,y]. $$
A local rectangle $R_q$ centered at $q$ denotes an open neighborhood of $q$ obtained as the image of $\mathcal W^{cu}(q,\delta)\times \mathcal W^s(q,\delta)$ by the Bowen bracket map.
Let $\varphi\in C^{\infty}(\mathcal M; \Lambda^{d_s}T^*\mathcal M)$ be a smooth $d_s$-form supported in $R_q$. We can define the duality as follows:
\begin{equation}
\label{eq:muv}
m^u(\varphi):=\int_{\mathcal M}m^u\wedge \alpha \wedge \varphi=\int_{\mathcal W^u(q,\delta)}\left( \int_{R_q^{cs}(x)}e^{h_{\mathrm{top}}^{\mathcal W}(A(y,x))}( \alpha\wedge \varphi)(y)\right) dm^u(x),
\end{equation}
where $R_q^{cs}(x):= W^{cs}(x)\cap R_q.$
We see that the previous definition makes sense as $\varphi\wedge \alpha$ is a $(d_s+\kappa)-$form and as $h_{\mathrm{top}}^{\mathcal W}(A(y,x))$ is smooth in $y$ and continuous in $x$. Since $m^u_x$ is a measure on each local unstable leaf, the formula clearly defines a current $m^u\in \mathcal D' (\mathcal M; \Lambda^{d_u}(E_u^*\oplus E_s^*))$ of order $0$, that is, it can be tested against continuous sections.
Let $\omega_k \in C^{\alpha}(\mathcal M; \Lambda_k^{d_s})$ for some $k$. If $k\geq 1$, we can use \eqref{eq:muv} and the definition of the dual bundle $E_s^*$ \eqref{eq:dual} to get that $m^u(\omega_k)=0$.

To prove the first part of \eqref{eq:eee}, it suffices to consider a $d_s$-form $\varphi$ supported in $R_q$ and such that $e^{-t\mathbf X_{A_0}}\varphi$ is also supported in $R_q$ for some $A_0\in \mathcal W$. We have to prove that one then has $m^u(e^{t\mathbf{X}_{A_0}}\varphi)=e^{-th_{\mathrm{top}}^{\mathcal W}(A_0)}m^u(\varphi)$. We first use the Leibniz rule to get 
$$\mathbf{X}_{A_0}(\varphi \wedge \alpha)=(\mathbf{X}_{A_0}\varphi) \wedge \alpha+\varphi \wedge \mathbf{X}_{A_0}\alpha=(\mathbf{X}_{A_0}\varphi) \wedge \alpha, $$
where we used the invariance of $\alpha$, see \eqref{eq:alpha1}.
We use \eqref{eq:v} as well as \eqref{eq:phit} to get
 \begin{align*}
m^u(e^{t\mathbf{X}_{A_0}}\varphi)&=\int_{\mathcal W^u(q,\delta)}\left( \int_{R_q^{cs}(x)}e^{h_{\mathrm{top}}^{\mathcal W}(A(y,x))}\big(e^{t\mathbf{X}_{A_0}}\varphi \big)\wedge \alpha(y)\right) dm^u(x)
\\&=\int_{\mathcal W^u(q,\delta)}\left( \int_{R_q^{cs}(x)}e^{h_{\mathrm{top}}^{\mathcal W}(A(\varphi_{t}^{A_0}y,\varphi_{t}^{A_0}x))}e^{t\mathbf{X}_{A_0}}\big(\varphi \wedge \alpha\big)(y)\right) dm^u(x)
\\&=\int_{\mathcal W^u(q,\delta)}\left( \int_{R_q^{cs}(\varphi_{t}^{A_0} x )}e^{h_{\mathrm{top}}^{\mathcal W}(A(w,\varphi_{t}^{A_0}x))}(\varphi\wedge \alpha)(w)\right) dm^u(x),
\\
&=\int_{\mathcal W^u(q,\delta)}\left( \int_{R_q^{cs}(x)}e^{h_{\mathrm{top}}^{\mathcal W}(A(y,x))}(\varphi\wedge \alpha)(y)\right) d\big((\varphi_{t}^{A_0})_*m^u\big)(x)
\\&=e^{-th_{\mathrm{top}}^{\mathcal W}(A_0)}\int_{\mathcal W^u(q,\delta)}\left( \int_{R_q^{cs}(x)}e^{h_{\mathrm{top}}^{\mathcal W}(A(y,x))}(\varphi\wedge \alpha)(y)\right) dm^u(x)
\\&=e^{-th_{\mathrm{top}}^{\mathcal W}(A_0)}m^u(\varphi).
\end{align*}
For the wavefront set condition, we mimick the argument of \cite[Lemma 3.2]{Hum}. We consider a smooth $d_s$-form $\chi$ supported in $R_q$, a phase function $S\in C^{\infty}(\mathcal M)$ such that $dS(q)=\xi \notin E_s^*$ and compute
$$ m^u(e^{i\frac S h}\chi)=\int_{\mathcal W^u(q,\delta)}\left( \int_{R_q^{cs}(x)}e^{h_{\mathrm{top}}^{\mathcal W}(A(y,x))}e^{i\frac{S(y)} h}(\chi\wedge \alpha)(y)\right) dm^u(x).$$
Now, the proof is easier than for \cite[Lemma 3.2]{Hum} as the integrand is easily seen to be smooth along the weak-stable leaves uniformly in $x$ (in the sense explained in \cite[Lemma 3.2]{Hum}). This means that one can perform integration by parts in $y$ and show that the integrand is a $O(h^{\infty})$ and thus $m^u(e^{i\frac S h}\chi)=O(h^{\infty})$ as long as $dS$ does not vanish on $R_q^{cs}(x)$, which can be ensured near $q$ by the definition of $E_s^*$. This shows that $\xi \notin \mathrm{WF}(m^u)$ and thus $ \mathrm{WF}(m^u)\subset E_s^*$. In other words, $h_{\mathrm{top}}^{\mathcal W}$ is a Ruelle-Taylor resonance with the associated Ruelle co-resonant state given by $m^u$ using \eqref{eq:vp}.
\end{proof}

\subsection{Constructing the norm}
The arguments in \cite[Section 5]{GBGHW} show an alternative description of Ruelle-Taylor resonances as eigenvalues of a "joint propagator" operator $R$ which appears in the parametrix construction \cite[Lemma 4.14]{GBGHW}. This allows the authors of \cite{GBGHW} to give a precise description of resonant states on the \emph{critical axis} and is the starting point of the fine study of SRB measures they obtain in \cite{GBGW}. 

As already noticed by the author in \cite{Hum}, the validity of the results of \cite[Section 5]{GBGHW} only depends on the existence of a norm $\|.\|$ for which the propagator $e^{t\mathbf X_A}$ has sharp exponential growth. In the case of the action on functions (as considered in \cite{GBGW}), one could take the $L^{\infty}$-norm for which the propagator is bounded (hence giving a first resonance at $0$). When acting on other bundles of forms or when adding a smooth potential $V$, the norm can be constructed directly from the leaf measures $m^u$ (see \cite[Lemma 3.3, Proposition 4.4]{Hum}).
In this subsection, we follow this strategy and construct a norm $\|.\|_u$ on $\mathscr E_0^{d_s}$ satisfying
$$ \forall \omega \in \mathscr E_0^{d_s},\ \forall A\in \mathcal W, \quad \|e^{-\mathbf X_A}\omega\|_u\leq e^{h_{\mathrm{top}}(\varphi_1^A)}\|\omega\|_u.$$
This will allow us to prove that the critical axis of the action of $d_s$-forms is located exactly at $\mathcal C:=\{\lambda \in \mathfrak a_\mathbb C^* \mid \forall A\in \mathcal W, \ \mathrm{Re}(\lambda(A)) =  h_{\mathrm{top}}(\varphi_1^A)\}$ and that the Ruelle-Taylor resonances on the axis have no Jordan block. We follow closely the arguments of \cite[Lemma 4.3, Proposition 4.4]{Hum}. By Assumption \ref{assumption}, the bundle $\Lambda_0^{d_s}$ is trivial. By fixing a nowhere vanishing $d_s$-form, one can associate a density $|\varphi|$ to any $\varphi \in  C^{0}(\mathcal M; \Lambda_0^{d_s})$.
\begin{lemm}
\label{Crit}
We define a norm on $C^{0}(\mathcal M; \Lambda_0^{d_s})$ by
\begin{equation}
\label{eq:newnorm2}
\forall \varphi \in  C^{0}(\mathcal M; \Lambda_0^{d_s}), \quad \|\varphi\|_{u,0}:= m^u(|\varphi|).
\end{equation}
This norm satisfies the bound
\begin{equation}
\label{eq:bbound}
\forall  \varphi \in  C^{0}(\mathcal M; \Lambda_0^{d_s}), \ \forall A\in \mathcal W, \quad \|e^{-\mathbf{X}_A}\varphi\|_{u,0}\leq e^{h_{\mathrm{top}}^{\mathcal W}(A)}\|\varphi\|_{u,0}.
\end{equation}

\end{lemm}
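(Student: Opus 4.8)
The plan is to use that the propagator $e^{-\mathbf{X}_A}$ preserves the line bundle $\Lambda_0^{d_s}$ and is compatible with the pointwise absolute value defining $\|\cdot\|_{u,0}$; then \eqref{eq:bbound} will follow directly from the eigen-relation $\mathbf{X}_A m^u = h_{\mathrm{top}}^{\mathcal W}(A)m^u$ established in Proposition \ref{muv}, and in fact it will hold with equality.

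First I would record that $e^{-\mathbf{X}_A}$ is pullback of forms by the diffeomorphism $\varphi_{-1}^A = (\varphi_1^A)^{-1}$, and that it maps $\Lambda_0^{d_s}$ into itself: $d\varphi_t^A$ preserves each summand of the Anosov splitting \eqref{eq:split}, and it preserves the central bundle $E_0$ because the vector fields $X_B$ commute with the flow, so it preserves $E_u\oplus E_0$, hence its annihilator $E_u^*$, hence $\Lambda^{d_s}E_u^* = \Lambda_0^{d_s}$. Since $\Lambda_0^{d_s}$ is a line bundle trivialized by the fixed nowhere-vanishing section $\omega_0$, we may write $e^{-\mathbf{X}_A}\omega_0 = J_A\,\omega_0$ for some $J_A\in C^0(\mathcal M)$. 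By Assumption \ref{assumption} the stable and unstable foliations are orientable, and $\varphi_t^A$ is connected to the identity, so it preserves their orientations and $J_A$ has a fixed sign; orienting $\omega_0$ so that $\alpha\wedge\omega_0$ is positively oriented along the center-stable plaques makes $J_A>0$ and simultaneously makes $m^u$ nonnegative on nonnegative sections of $\Lambda_0^{d_s}$, so that $\|\cdot\|_{u,0}$ is indeed a norm. Writing $\varphi = f\omega_0$ with $f\in C^0(\mathcal M)$ and $|\varphi| := |f|\,\omega_0$, multiplicativity of the pullback gives $e^{-\mathbf{X}_A}\varphi = (f\circ\varphi_{-1}^A)\,J_A\,\omega_0$, and therefore, using $J_A>0$,
\[
\big|e^{-\mathbf{X}_A}\varphi\big| \;=\; \big(|f|\circ\varphi_{-1}^A\big)\,J_A\,\omega_0 \;=\; e^{-\mathbf{X}_A}|\varphi| .
\]

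Next I would feed this into Proposition \ref{muv}. The current identity $\mathbf{X}_A m^u = h_{\mathrm{top}}^{\mathcal W}(A)m^u$ there, together with the fact that $m^u$ is of order $0$, yields $m^u\big(e^{-\mathbf{X}_A}\psi\big) = e^{h_{\mathrm{top}}^{\mathcal W}(A)}\,m^u(\psi)$ for every continuous section $\psi$ of $\mathscr E_0^{d_s}$: for smooth $\psi$ this follows by differentiating $t\mapsto m^u(e^{t\mathbf{X}_A}\psi)$ and integrating, and it extends to continuous $\psi$ by density since $e^{t\mathbf{X}_A}$ is bounded on $C^0$ and $m^u$ is continuous on $C^0$. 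Applying this with $\psi = |\varphi|$ and combining with the previous display,
\[
\|e^{-\mathbf{X}_A}\varphi\|_{u,0} \;=\; m^u\big(|e^{-\mathbf{X}_A}\varphi|\big) \;=\; m^u\big(e^{-\mathbf{X}_A}|\varphi|\big) \;=\; e^{h_{\mathrm{top}}^{\mathcal W}(A)}\,m^u(|\varphi|) \;=\; e^{h_{\mathrm{top}}^{\mathcal W}(A)}\,\|\varphi\|_{u,0},
\]
which gives \eqref{eq:bbound} (in fact with equality).

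The only step that needs real care is the first one: the invariance of the line bundle $\Lambda_0^{d_s}$ under $e^{-\mathbf{X}_A}$ together with the positivity of the scalar Jacobian $J_A$. The invariance is forced by the structure of the Anosov splitting, while the sign of $J_A$ — and the compatible choice of orientation for $\omega_0$ that turns $m^u$ into a positive functional — is exactly where the orientability hypothesis of Assumption \ref{assumption} enters (absent it, one passes to a double cover, as noted in the footnote to that assumption). Everything else is the multiplicativity of pullbacks and the order-zero regularity of $m^u$.
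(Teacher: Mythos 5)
Your proposal is correct and takes essentially the same route as the paper. The paper packs the argument for the bound \eqref{eq:bbound} into one line --- that it follows from the co-resonant state equation \eqref{eq:eee} --- while you spell out the chain of facts that line is shorthand for: invariance of the line bundle $\Lambda_0^{d_s}=\Lambda^{d_s}E_u^*$ under $e^{-\mathbf X_A}$, positivity of the scalar factor $J_A$ in $(\varphi_{-1}^A)^*\omega_0=J_A\omega_0$ (from connectedness of $\varphi_t^A$ to the identity through the flow, so $J_A$ is continuous, nowhere zero, and equal to $1$ at $t=0$), the resulting commutation $|e^{-\mathbf X_A}\varphi|=e^{-\mathbf X_A}|\varphi|$, and the exponentiated form $m^u(e^{-\mathbf X_A}\psi)=e^{h_{\mathrm{top}}^{\mathcal W}(A)}m^u(\psi)$ of the eigenequation, extended from smooth to continuous $\psi$ by the order-zero property of $m^u$. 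Your observation that \eqref{eq:bbound} in fact holds with equality is correct and slightly sharper than the stated inequality.

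One small point is left out. You assert that the orientation convention makes $\|\cdot\|_{u,0}$ a norm, but what you actually argue only gives a nonnegative seminorm. Definiteness --- that $m^u(|\varphi|)=0$ forces $\varphi\equiv 0$ --- is where the full support of $m^u$ on each unstable leaf enters, part of the output of \cite[Theorem A]{CarHer} recorded as item $(2)$ of the list in Section $2$. The paper's brief verification of the norm axioms goes exactly through this: since $\Lambda_0^{d_s}$ is a line bundle, $\varphi(q)\neq 0$ forces $|\varphi|$ to be bounded below on a nonempty open set, and $m^u$ assigns positive mass to every such set, whence $\|\varphi\|_{u,0}>0$. That full-support input is the one genuinely dynamical ingredient in the norm check, so it should be recorded alongside the rest.
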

\begin{proof}
Consider $\varphi \in  C^{0}(\mathcal M;\Lambda^{d_s}_0)$. The bundle is one dimensional and it thus makes sense to talk of $|\varphi|\wedge \alpha$ as a $d_s+\kappa$ density. If $\varphi(q)\neq 0$ then by continuity, $\varphi\neq 0$ on a small open set. Then $\|\varphi \|_{u,0}>0$ because $m^u$ gives a positive measure to any non-empty open set by \cite[Theorem A]{CarHer}. The bound \eqref{eq:bbound} follows from the fact that $m^u$ is a co-resonant state, see \eqref{eq:eee}.
\end{proof}
 We now use this norm and a "shift" to define inductively a norm on $C^0(\mathcal M; \Lambda_k^{d_s})$ for any $k$.
Consider the set of all finite covers by open sets which trivialize $E_s$ and $E_u$:
$$\mathcal C:=\{\mathcal U:=(U_j)_{1\leq j \leq n}\mid  \mathcal M=\cup_{j=1}^n U_j, \ U_j \text{ open and } E_s \text{ and } E_u \text{ are trivial on } U_j\}. $$
For any $\mathcal U\in \mathcal C$, let $\mathcal P(\mathcal U)$ be the set of partition of unity $(\chi_j)_{j=1}^n$ associated to the cover $\mathcal U$. Finally, we define the set of (normalized) local trivializations of $E_u$:
$$\mathscr V^u(\mathcal U):=\{ (X^j_{u,h})_{1\leq j \leq d_u}\in C^0(U_h; E_u) \mid (E_u)_{|U_h}=\mathrm{Span}\{X^j_{u,h}\}_{1\leq j \leq d_u}, \ \| X^j_{u,h}\|_{C^0}=1\},$$ 
and its dual conterpart
$$\mathscr F^u(\mathcal U):=\{ (Y^j_{u,h})_{1\leq j \leq d_s}\in C^0(U_h; E_u^*) \mid (E_u^*)_{|U_h}=\mathrm{Span}\{Y^j_{u,h}\}_{1\leq j \leq d_s}, \ \| Y^j_{u,h}\|_{C^0}=1\}.$$ 
\begin{prop}
\label{realnorm}
We define a norm inductively on $C^0(\mathcal M; \Lambda_{k}^{d_s})$ for $ k\geq 1$ by posing, for $f \in  C^{0}(\mathcal M; \Lambda_k^{d_s})$,
\begin{equation}
\label{eq:T}
 \|f\|_{u,k}:= \sup_{\mathcal U\in \mathcal C}\sup_{(\chi_j)\in \mathcal P(\mathcal U)}\max_{h=1}^n \sup_{(X^j_{u,h})\in \mathscr V^u}\sup_{(Y^i_{u,h})\in \mathscr F^u}\sum_{i=1}^{d_s}\sum_{j=1}^{d_u}\|\chi_h\iota_{X_{u,h}^j}f\wedge Y_{u,h}^{i}\|_{u,k-1}.
\end{equation}
This norm satisfies the bound
\begin{equation}
\label{eq:bbbound}
\forall  \varphi \in  C^{0}(\mathcal M; \Lambda_k^{d_s}),\ \forall A\in \mathcal W, \quad \|e^{- \mathbf{X}_A}\varphi\|_{u,k}\leq C(A) e^{h_{\mathrm{top}}^{\mathcal W}(A)-k\eta(A)}\|\varphi\|_{u,k}
\end{equation}
for some $C(A),\eta(A)>0$ that depend on the Anosov constants, see \eqref{eq:s}\footnote{In particular, they can be chosen uniformly in any proper subcone of $\mathcal W$.}.  
Consider $f\in C^{\infty}(\mathcal M; \mathscr E_{d_s})$ and consider its decomposition
$$f= \sum_{k=0}^{d_s} \omega_k, \quad \omega_k \in C^{\alpha}(\mathcal M; \Lambda_k^{d_s}). $$
We define a norm on $C^{\infty}(\mathcal M; \mathscr E_{d_s})$:
\begin{equation}
\label{eq:norm}
 \|f\|_{u}:=\sum_{k=0}^{d_s}\|\omega_k\|_{u,k}, \quad \forall A\in \mathcal W, \ \|e^{-t\mathbf{X}_A}f\|_u\leq Ce^{th_{\mathrm{top}}(\varphi_1^A)}\|f\|_u.
\end{equation}
\end{prop}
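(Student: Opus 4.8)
The plan is to prove \eqref{eq:bbbound} by induction on $k$, staying close to \cite[Lemma 4.3, Proposition 4.4]{Hum}. The base case $k=0$ is exactly Lemma \ref{Crit}: it defines $\|\cdot\|_{u,0}$ on the trivial line bundle $\Lambda^{d_s}_0$ and already gives \eqref{eq:bbbound} with $C(A)=1$ and no loss in $k$. Assuming $\|\cdot\|_{u,k-1}$ is a norm on $C^{0}(\mathcal M;\Lambda^{d_s}_{k-1})$ satisfying \eqref{eq:bbbound}, I would first check that \eqref{eq:T} defines a norm on $C^{0}(\mathcal M;\Lambda^{d_s}_k)$. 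Finiteness is automatic: since $\chi_h$ and the frame vectors $X^j_{u,h}$, $Y^i_{u,h}$ all have $C^0$-norm $1$ and, inductively, $\|\cdot\|_{u,k-1}\le C\|\cdot\|_{C^{0}}$ (a bound propagating from $\|\cdot\|_{u,0}=m^u(|\cdot|)\le C\|\cdot\|_{C^0}$, as $m^u$ has order $0$), each summand in \eqref{eq:T} is $\le C\|f\|_{C^0}$ uniformly in the covers, partitions and frames, so the supremum is finite. Homogeneity and the triangle inequality are inherited from $\|\cdot\|_{u,k-1}$, and definiteness follows because, when $f(q)\neq0$, one can pick a chart, a partition of unity and frames near $q$ for which some $\chi_h\iota_{X^j_{u,h}}f\wedge Y^i_{u,h}$ is a nonzero section of $\Lambda^{d_s}_{k-1}$ near $q$, so that term is positive by induction.

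The heart of the matter is the growth bound. I would fix $A\in\mathcal W$, set $\psi:=\varphi_{-1}^A=(\varphi_1^A)^{-1}$ so that $e^{-\mathbf X_A}=\psi^{*}$, and, for an arbitrary cover $\mathcal U=(U_j)\in\mathcal C$, partition of unity $(\chi_h)$ and frames $(X^j_{u,h})$, $(Y^i_{u,h})$, use the naturality of interior product and pullback under a diffeomorphism to write the pointwise identity
\[
\chi_h\,\iota_{X^j_{u,h}}\!\big(e^{-\mathbf X_A}\varphi\big)\wedge Y^i_{u,h}
= e^{-\mathbf X_A}\!\Big((\chi_h\circ\varphi_1^A)\,\iota_{\psi_*X^j_{u,h}}\varphi\wedge(\varphi_1^A)^{*}Y^i_{u,h}\Big).
\]
Here $(\chi_h\circ\varphi_1^A)_h$ is a partition of unity subordinate to the cover $\mathcal U':=\big((\varphi_1^A)^{-1}U_j\big)_j$, which again lies in $\mathcal C$ because the Anosov splitting is $d\varphi^A$-invariant; the pushed-forward frame vector $\psi_*X^j_{u,h}$ is a section of $E_u$ over $(\varphi_1^A)^{-1}U_h$ with $\|\psi_*X^j_{u,h}\|_{C^0}\le Ce^{-\nu}$ by the unstable part of \eqref{eq:s} at time $-1$; and $(\varphi_1^A)^{*}Y^i_{u,h}$ is a section of the corresponding dual bundle over $(\varphi_1^A)^{-1}U_h$ whose $C^0$-norm is bounded in terms of the Anosov data. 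I would then normalise $\psi_*X^j_{u,h}$ and $(\varphi_1^A)^{*}Y^i_{u,h}$ to unit frames — an admissible configuration over $\mathcal U'$ — absorb the resulting factor, of size $\le C e^{-\nu}$, apply the inductive bound \eqref{eq:bbbound} at level $k-1$ to the right-hand side above, and recognise the remaining quantity as one of the terms appearing in the supremum \eqref{eq:T} that defines $\|\varphi\|_{u,k}$ (for the cover $\mathcal U'$, the partition $(\chi_h\circ\varphi_1^A)$, and the normalised frames). Summing over $i,j$, taking the maximum over $h$, then the supremum over all covers, partitions and frames, produces \eqref{eq:bbbound} at level $k$, with the loss $e^{-\eta(A)}$ coming from the contraction factor of $\psi_*X^j_{u,h}$; here $\eta(A)>0$ has to be chosen small enough, and if $Ce^{-\nu}\ge1$ one first passes to $\varphi_T^A$ for $T$ large and then iterates. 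By \eqref{eq:s} the constants $C(A),\eta(A)$ may be taken uniform on any proper subcone of $\mathcal W$, and $C(A)$ independent of $k$ as only finitely many levels occur.

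For the last statement I would note that $e^{-t\mathbf X_A}=(\varphi_{-t}^A)^{*}$ preserves the splitting \eqref{eq:split2} — again by invariance of the Anosov bundles — so the $\Lambda^{d_s}_k$-component of $e^{-t\mathbf X_A}f$ is $e^{-t\mathbf X_A}\omega_k$; applying \eqref{eq:bbbound} to $tA\in\mathcal W$, and using the linearity of $A\mapsto h_{\mathrm{top}}^{\mathcal W}(A)$ from Theorem \ref{Theo1} together with $e^{-tk\eta(A)}\le 1$ for $t\ge0$, gives
\[
\|e^{-t\mathbf X_A}f\|_{u}=\sum_{k=0}^{d_s}\|e^{-t\mathbf X_A}\omega_k\|_{u,k}
\le C\,e^{t h_{\mathrm{top}}^{\mathcal W}(A)}\sum_{k=0}^{d_s}e^{-tk\eta(A)}\|\omega_k\|_{u,k}
\le C\,e^{t h_{\mathrm{top}}(\varphi_1^A)}\|f\|_{u},
\]
which is \eqref{eq:norm}. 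The step I expect to be the main obstacle is the commutation identity above together with the verification that, after normalisation, the transported data $\big(\mathcal U',(\chi_h\circ\varphi_1^A),\widetilde X^j_{u,h},\widetilde Y^i_{u,h}\big)$ is again an admissible configuration in \eqref{eq:T} — this is where the $d\varphi^A$-invariance of the splitting and the triviality hypotheses of Assumption \ref{assumption} are used; once it is in place, the exponential loss in $k$ is exactly the contraction rate of $E_u$ under the backward flow and the remainder is bookkeeping.
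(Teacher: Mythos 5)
Your argument is correct and follows essentially the same route the paper intends: the paper does not write out a proof but refers to the inductive construction of \cite[Proposition 4.4]{Hum}, and your reconstruction — induction on $k$ with base case Lemma~\ref{Crit}, the pullback/contraction naturality identity to commute the ``shift'' $f\mapsto\chi_h\,\iota_{X^j_{u,h}}f\wedge Y^i_{u,h}$ through $e^{-\mathbf X_A}=(\varphi_{-1}^A)^*$, and normalisation of the transported frame to land back in an admissible configuration over $\mathcal U'=\big((\varphi_1^A)^{-1}U_j\big)_j$ — is precisely that construction carried over, with the exponential loss $e^{-k\eta(A)}$ coming from the Anosov contraction of $\psi_*X^j_{u,h}$. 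One small remark: you only extract the contraction factor from $\psi_*X^j_{u,h}$ and bound $(\varphi_1^A)^*Y^i_{u,h}$ crudely, but in fact $\|(\varphi_1^A)^*Y^i_{u,h}\|_{C^0}\le Ce^{-\nu}$ as well (since $Y^i_{u,h}\in E_u^*$ annihilates $E_u\oplus E_0$ and $d\varphi_1^A$ contracts $E_s$), so the covector side also contributes a contraction — this makes the estimate slightly better than you state and in particular removes the potential worry of the $Y$-side working against you. The handwaving around $C_0^t$ growing under iteration (passing to a large time $T$ first and then iterating, using uniformity on proper subcones) is exactly the standard fix and is consistent with the level of detail the paper itself gives.
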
 
The proof only relies on the Anosov property \eqref{eq:s} and the fact that $d_s$ is the dimension of the stable bundle. In particular, one can mimick the proof of \cite[Proposition 4.4]{Hum}, which is the equivalent construction for the rank one case. The fact that the central direction is of higher dimension is not a problem as our definition of $\mathscr E_0^{d_s}$ consists of forms which are cancelled by the contraction with any vector in the neutral direction.
\subsection{Critical axis}
We use the norm constructed in Proposition \ref{realnorm} to locate the critical axis. Again, we follow \cite[Section 5]{GBGHW} and \cite[Section 4]{Hum}. Recall from the parametrix construction \cite[Proposition 4.14]{GBGHW} that given a basis $A_1,A_2,\ldots A_\kappa, \in \mathcal W$, $\lambda \in \mathfrak a_{\mathbb C}^*$ and functions $\phi_j\in C_c^{\infty}(\mathbb R_+)$ such that $\int_0^{+\infty}\phi_j(t)dt=1$, one defines
\begin{equation}
\label{eq:R}
R(\lambda):=\prod_{j=1}^\kappa\int_{\mathbb R} e^{-t_j(\mathbf{X}_{A_j}-\lambda(A_j))}\phi_j(t)dt.
\end{equation}
The operator $R$ plays the role of a "joint" propagator and appears as a remainder in the parametrix construction of \cite[Propositions 4.14, 4.17]{GBGHW}.
 More precisely, for $\lambda\in \mathfrak a^*_{\mathbb C}$, there exists a parametrix $Q(\lambda)$ and  an anisotropic space $\mathcal H^{NG}$ on which
\begin{equation}
\label{eq:parametrix}
Q(\lambda)d_{\mathbf{X}+\lambda}+d_{\mathbf{X}+\lambda}Q(\lambda)=\mathrm{Id}-R(\lambda)\otimes \mathrm{Id}=:F(\lambda),
\end{equation}
where $d_{\mathbf{X}}$ denotes the Taylor differential associated to $\mathbf{X}$, see \cite[Section 3]{GBGHW} for a precise definition which we will not need.
Moreover, the operator $R(\lambda)$ acting on $\mathcal H^{NG}$ has essential spectral radius in $B(0,1/2)$ and  the spectrum outside $B(0,1/2)$ is discrete.  The crucial point for our purpose is that for any Ruelle-Taylor resonance $\lambda \in \mathrm{Res}_{X}^{d_s}$, one has $0\in \mathrm{Spec}(F(\lambda))$. We insist in the fact that the Ruelle-Taylor resonances do not depend on any choice done in the parametrix construction. We denote by $\Pi_0(\lambda)$ the spectral projector of $F(\lambda)$ on $0$. 
From the parametrix construction above, we deduce the position of the critical axis.
\begin{lemm}[Critical axis]
\label{lemm crit}
The Ruelle-Taylor resonances are located in 
$$\{\lambda \in \mathfrak a_\mathbb C^* \mid \forall A\in \mathcal W,  \ \mathrm{Re}(\lambda(A)) \leq h_{\mathrm{top}}(\varphi_1^A)\}.$$
\end{lemm}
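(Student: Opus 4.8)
The plan is to reduce the statement about resonances to the spectral bound on the joint propagator $R(\lambda)$ via the parametrix identity \eqref{eq:parametrix}, and then to estimate the spectral radius of $R(\lambda)$ using the norm $\|\cdot\|_u$ built in Proposition \ref{realnorm}. Concretely, suppose $\lambda \in \mathrm{Res}_{\mathbf X}^{d_s}$. Then by the discussion following \eqref{eq:parametrix}, $0 \in \mathrm{Spec}(F(\lambda)) = \mathrm{Spec}(\mathrm{Id} - R(\lambda)\otimes \mathrm{Id})$ on the anisotropic space $\mathcal H^{NG}$, i.e.\ $1 \in \mathrm{Spec}(R(\lambda))$ acting on $\mathcal H^{NG}\otimes \Lambda^{d_s}(E_s^*\oplus E_u^*)$. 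Since $R(\lambda)$ has essential spectral radius at most $1/2$, the value $1$ must be an honest eigenvalue: there exists $u \neq 0$, with $\mathrm{WF}(u)\subset E_u^*$, such that $R(\lambda)u = u$, and moreover $u$ can be taken to be a joint (generalized) resonant state, so in particular $-\mathbf X_{A_j} u = \lambda(A_j) u$ after passing to the finest such state (or one argues directly with the generalized eigenspace).

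The second step is the quantitative bound. From \eqref{eq:norm}, for every $A\in \mathcal W$ and every smooth section $f$ of $\mathscr E_0^{d_s}$, one has $\|e^{-t\mathbf X_A}f\|_u \leq C e^{t h_{\mathrm{top}}(\varphi_1^A)}\|f\|_u$. Because $R(\lambda)$ is an average of the operators $e^{-t_j(\mathbf X_{A_j}-\lambda(A_j))}$ against the probability-like densities $\phi_j$, one gets, writing $A = A_1 + \cdots + A_\kappa$ (or more carefully iterating over the factors),
\begin{equation*}
\|R(\lambda)f\|_u \leq \Big(\prod_{j=1}^\kappa \int_{\mathbb R_+} e^{-t_j(h_{\mathrm{top}}(\varphi_1^{A_j}) - \mathrm{Re}\,\lambda(A_j))}\phi_j(t_j)\,dt_j\Big)\, C'\,\|f\|_u.
\end{equation*}
Iterating $R(\lambda)^k$ and using that $\mathbf X$ generates a commuting family so the bounds multiply, one finds that the spectral radius of $R(\lambda)$ with respect to $\|\cdot\|_u$ is controlled by the geometric factors above; if $\mathrm{Re}\,\lambda(A_j) > h_{\mathrm{top}}(\varphi_1^{A_j})$ for some $j$, then the corresponding integral factor can be made $<1$ and, raising to high powers and using the sharpness of the $\|\cdot\|_u$ bound, one concludes that the $\|\cdot\|_u$-spectral radius of $R(\lambda)$ is strictly less than $1$. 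Since the eigenvalue $1$ of $R(\lambda)$ on $\mathcal H^{NG}$ corresponds to a genuine distributional eigenstate with wavefront in $E_u^*$ — and such a state can be tested against $\|\cdot\|_u$ because $\|\cdot\|_u$ is built from $m^u$ which pairs with exactly these currents, cf.\ Proposition \ref{muv} — we obtain a contradiction. Hence $\mathrm{Re}\,\lambda(A) \leq h_{\mathrm{top}}(\varphi_1^A)$ for all $A$ in the generating basis, and by linearity of $h_{\mathrm{top}}^{\mathcal W}$ and convexity of $\mathcal W$, for all $A\in \mathcal W$.

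I would run the argument by contraposition: assume there is a resonance $\lambda$ with $\mathrm{Re}\,\lambda(A_0) > h_{\mathrm{top}}(\varphi_1^{A_0})$ for some $A_0\in \mathcal W$, choose the basis $A_1,\dots,A_\kappa$ of $\mathcal W$ so that $A_0$ lies in their positive span (possible since $\mathcal W$ is an open cone), and derive that the associated resonant current $u$ satisfies simultaneously $e^{-t\mathbf X_{A_0}}u = e^{-t\lambda(A_0)}u$ and a norm bound forcing $\|e^{-t\mathbf X_{A_0}}u\|_u \lesssim e^{t h_{\mathrm{top}}(\varphi_1^{A_0})}\|u\|_u$, which is impossible as $t\to\infty$ once $\|u\|_u$ is finite and nonzero. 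The delicate point — and the one I expect to be the main obstacle — is justifying that the abstract resonant state $u\in \mathcal H^{NG}$ (a priori only a distribution with controlled wavefront set) actually has finite, nonzero $\|\cdot\|_u$-norm, since $\|\cdot\|_u$ is a priori only defined on continuous sections of $\mathscr E_0^{d_s}$; this requires either a duality/density argument pairing $u$ against $m^u$ using the wavefront condition $\mathrm{WF}(u)\subset E_u^*$ together with $\mathrm{WF}(m^u)\subset E_s^*$ (so Hörmander's criterion allows the product), exactly as in \cite[Section 4]{Hum}, or working with the transfer-operator/mollification argument that approximates $u$ by smooth sections while keeping the norm bound. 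Once that pairing is set up, the rest is the bookkeeping of the exponential estimates above.
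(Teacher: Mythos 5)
Your proposal follows the same route as the paper: use the parametrix identity \eqref{eq:parametrix} to reduce the problem to invertibility of $F(\lambda) = \mathrm{Id} - R(\lambda)$, and then use the norm $\|\cdot\|_u$ from Proposition~\ref{realnorm} to show $R(\lambda)$ is a strict contraction whenever $\mathrm{Re}\,\lambda(A_j) > h_{\mathrm{top}}^{\mathcal W}(A_j)$ along a basis $(A_j)$ of $\mathcal W$ (which one can always choose, since the failure of the inequality at one $A_0$ extends to an open subcone). The paper's own proof is more streamlined than yours in one respect: it stops at showing $\|R(\lambda)f\|_u\leq\frac12\|f\|_u$ for smooth $f$ and directly concludes invertibility of $F(\lambda)$, hence $\Pi_0(\lambda)=0$, without ever picking a distributional eigenvector $u$ and confronting the question of whether $\|u\|_u$ is finite and nonzero. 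That last step — which you correctly flag as the delicate point — is precisely what you create for yourself by phrasing the contraposition through an explicit resonant state $u\in\mathcal H^{NG}$; the paper sidesteps it by arguing at the level of the operator $F(\lambda)$ on the dense subspace of smooth sections. One small caveat: you write $e^{-t\mathbf X_{A_0}}u = e^{-t\lambda(A_0)}u$, but from the resonance equation $-\mathbf X_{A_0}u=\lambda(A_0)u$ one gets $e^{-t\mathbf X_{A_0}}u = e^{t\lambda(A_0)}u$; with this sign the growth $e^{t\mathrm{Re}\,\lambda(A_0)}$ does indeed contradict the bound $Ce^{th_{\mathrm{top}}^{\mathcal W}(A_0)}$ once $\mathrm{Re}\,\lambda(A_0)>h_{\mathrm{top}}^{\mathcal W}(A_0)$, so your conclusion is right but the intermediate formula has a sign slip (the same sign convention issue appears in the displayed estimate, which mirrors the paper's own typographical sign in \eqref{eq:R}).
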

\begin{proof}
Let $\lambda\notin \{\nu \in \mathfrak a_\mathbb C^* \mid \forall A\in \mathcal W, \ \mathrm{Re}(\nu(A)) \leq h_{\mathrm{top}}(\varphi_1^A)\}$ and choose a basis $(A_j)_{1\leq j \leq \kappa}\in\mathcal W^{\kappa}$ such that $\mathrm{Re}(\lambda(A_j)) > h_{\mathrm{top}}^{\mathcal W}(A_j)$ for all $j$. Then the spectral projector of $F(\lambda)=\mathrm{Id}- R(\lambda)$ is given by the integral
\begin{equation} 
\label{eq:proj}
\Pi_0(\lambda)=\frac{1}{2\pi i}\int_{|z|=\epsilon}(z\mathrm{Id}-R(\lambda))^{-1}dz,
\end{equation}
for a radius $\epsilon>0$ small enough. If $f\in C^{\infty}(\mathcal M;\mathscr E_0^{d_s})$, then using Proposition \ref{realnorm}, one has
\begin{align*}
\| R(\lambda)f\|_{u}&\leq \int_{(\mathbb R_+)^{\kappa}} \|e^{-\sum_{j=1}^\kappa t_j (\mathbf{X}_{A_j}-\lambda(A_j))}f\|_{u}\prod_{j=1}^\kappa \phi_j(t_j)dt_1\ldots dt_\kappa
\\ & \leq \int_{(\mathbb R_+)^{\kappa}} e^{-\sum_{j=1}^\kappa t_j (h_{\mathrm{top}}^{\mathcal W}(A_j)-\mathrm{Re}(\lambda(A_j)))}\|f\|_{u}\prod_{j=1}^\kappa \phi_j(t_j)dt_1\ldots dt_\kappa\leq \frac 1 2\|f\|_u
\end{align*}
if $\phi_j$ are chosen with support in $[T_j,+\infty[$ for $T_j$ large enough. In particular, this shows that $F(\lambda)$ is invertible and thus $\Pi_0(\lambda)=0$, meaning that $\lambda$ is not a Ruelle-Taylor resonance. 
\end{proof}
Having constructed the norm, we can now adapt the proof of \cite[Proposition 3.1]{Hum} and describe the resonant states on the critical axis using the joint propagator $ R$. The following proposition can be seen as an analogue of \cite[Lemma 5.2, Proposition 5.4]{GBGHW} for the action on $d_s$-forms. For a Ruelle-Taylor resonance $\lambda_0$, recall that we write $\mathrm{Res}_{\mathbf{X}}^{d_s}(\lambda_0)$ for the space of associated resonant states. We obtain the following important characterization of resonant states on the critical axis.
\begin{prop}
\label{Prop R}
Let $\lambda_0 \in\{\lambda \in \mathfrak a_\mathbb C^* \mid \mathrm{Re}(\lambda) = h_{\mathrm{top}}^{\mathcal W}\}$ be on the critical axis. Then $R(\lambda_0):\mathcal H^{NG}\to \mathcal H^{NG}$ has spectral radius equal to $1$ and $\lambda_0$ is a Ruelle-Taylor resonance if and only if $1$ is an eigenvalue of $R(\lambda_0)$. In this case, $1$ is the only eigenvalue of $R(\lambda_0)$ on the unit circle, the eigenvectors of $R(\lambda_0)$ coincide with the Ruelle-Taylor resonances at $\lambda_0$ and there is no Jordan block. We have the following convergence, as bounded operators of $\mathcal H^{NG}\to \mathcal H^{NG}$: 
\begin{equation}
\label{eq:convergence}
\Pi_0(\lambda_0)=\lim_{k\to +\infty}R(\lambda_0)^k,\quad \mathrm{Res}_{\mathbf{X}}^{d_s}(\lambda_0)=\Pi_0(\lambda_0)(\mathscr E_0^{d_s}).
\end{equation} 
More precisely, for $\omega \in \mathscr E_0^{d_s}$  and $\eta\in \mathscr E_0^{d_u}$, one has
\begin{equation}
\label{eq:conv}
\langle \Pi_0(\lambda_0)\omega, \eta\rangle=\lim_{k\to+\infty}\int_{(\mathbb R_+)^{\kappa}} e^{-\sum_{j=1}^\kappa t_j\lambda_j}\prod_{j=1}^\kappa \phi_j^{*k}(t_j)\langle e^{-\sum_j t_j \mathbf{X}_j}\omega,\eta\rangle_{\mathscr E_0^{d_s}\times \mathscr E_0^{d_u}} dt_1\ldots dt_\kappa,
\end{equation}
where we have denoted by $\lambda_j=\lambda(A_j)$, $X_j=X_{A_j}$ and $\phi_j^{*k}$ the $k$-th convolution product of $\phi_j$ with itself.
\end{prop}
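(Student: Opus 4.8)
The plan is to follow \cite[Proposition 3.1]{Hum} --- which itself adapts \cite[Lemma 5.2, Proposition 5.4]{GBGHW} from the action on functions --- with the norm $\|\cdot\|_u$ of Proposition \ref{realnorm} playing the role that the $L^\infty$-norm plays in the SRB setting. Throughout I use that, by \cite[Proposition 4.14, Theorem 1]{GBGHW}, $R(\lambda_0)$ acting on $\mathcal H^{NG}$ has essential spectral radius at most $1/2$, so that its spectrum in $\{|z|>1/2\}$ consists of finitely many eigenvalues of finite multiplicity; that $\lambda_0\in\mathrm{Res}_{\mathbf X}^{d_s}$ precisely when $0\in\mathrm{Spec}(F(\lambda_0))$, i.e.\ when $1$ is an eigenvalue of $R(\lambda_0)$; and that, as in \cite[Section 5]{GBGHW}, the eigenspace $\ker(R(\lambda_0)-\mathrm{Id})$ is canonically identified with the space $\mathrm{Res}_{\mathbf X}^{d_s}(\lambda_0)$ of resonant states. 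This already yields the equivalence ``$\lambda_0$ resonance $\iff 1$ is an eigenvalue'' and, together with Proposition \ref{muv}, that $1$ is an eigenvalue at $\lambda_0=h_{\mathrm{top}}^{\mathcal W}$ with resonant state $m^s$, so that there the spectral radius is exactly $1$; in general it is at most $1$, with equality precisely when $\lambda_0$ is a resonance.

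The core analytic step is the bound $\mathrm{spr}(R(\lambda_0))\leq 1$. Let $u\in\mathcal H^{NG}$ with $R(\lambda_0)u=zu$, $|z|>1/2$. As in \cite[Section 5]{GBGHW}, such a $u$ is a current with $\mathrm{WF}(u)\subset E_u^*$; using this wavefront bound together with the local product structure and the full-support/Hölder-Jacobian properties of the leaf measures $m^u$ --- which is where Assumption \ref{assumption} enters, exactly as in \cite[Section 3]{Hum} --- one checks that $u$ has finite norm $\|u\|_u<\infty$. On the critical axis $\operatorname{Re}\lambda_0(A_j)=h_{\mathrm{top}}^{\mathcal W}(A_j)$, so the twist $e^{-t_j\lambda_0(A_j)}$ exactly compensates the sharp growth rate $e^{t_jh_{\mathrm{top}}^{\mathcal W}(A_j)}$ of the propagator on the top component $\Lambda_0^{d_s}$, while the components of degree $\geq 1$ are contracted strictly faster by \eqref{eq:bbbound}; iterating and using the semigroup identity $R(\lambda_0)^N=\prod_j\int e^{-t_j(\mathbf X_{A_j}+\lambda_0(A_j))}\phi_j^{*N}(t_j)\,dt_j$ with $\int\phi_j^{*N}=1$, one gets $\|R(\lambda_0)^N u\|_u\leq C\|u\|_u$ uniformly in $N$, hence $|z|^N\leq C$ for all $N$ and $|z|\leq 1$.

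Next I would rule out other spectrum on the unit circle and the presence of Jordan blocks. As in \cite[Proposition 5.4]{GBGHW}, an eigenvalue $e^{i\theta}\neq 1$ of $R(\lambda_0)$ would, through the distributional product of the corresponding resonant state with the co-resonant state $m^u$ (whose wavefront sets are transverse) and division by $m\propto m^s\wedge m^u\wedge\alpha$, produce a nonconstant eigenfunction of a time-one Koopman operator on $L^2(m)$ with eigenvalue of modulus $1$, contradicting the weak-mixing of the measure of maximal entropy $m$ (a consequence of its Bernoulli property, Theorem \ref{Theo1} and \cite[Theorem C]{CarHer}). For the absence of Jordan blocks at $1$: the generalized eigenspace $V=\operatorname{Ran}\Pi_0(\lambda_0)$ is finite-dimensional, its elements have finite $\|\cdot\|_u$-norm by the previous step, and $R(\lambda_0)$ is power-bounded on $(V,\|\cdot\|_u)$; a power-bounded endomorphism of a finite-dimensional normed space has no Jordan block at an eigenvalue of modulus $1$, so $R(\lambda_0)|_V=\mathrm{Id}_V$ and $V=\ker(R(\lambda_0)-\mathrm{Id})=\mathrm{Res}_{\mathbf X}^{d_s}(\lambda_0)$. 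Having thus isolated the spectrum on and outside the open unit disk as the semisimple eigenvalue $1$, the Riesz projector is $\Pi_0(\lambda_0)=\lim_{N\to\infty}R(\lambda_0)^N$ in $\mathcal L(\mathcal H^{NG})$; substituting the semigroup expression for $R(\lambda_0)^N$ and pairing with $\eta\in\mathscr E_0^{d_u}$ gives formula \eqref{eq:conv}, and applying $\Pi_0(\lambda_0)$ to the dense subspace $\mathscr E_0^{d_s}$ gives $\mathrm{Res}_{\mathbf X}^{d_s}(\lambda_0)=\Pi_0(\lambda_0)(\mathscr E_0^{d_s})$.

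I expect the main obstacle to be the finiteness $\|u\|_u<\infty$ for eigenvectors of $R(\lambda_0)$ with $|z|>1/2$: a priori $u$ lives only in the abstract anisotropic space $\mathcal H^{NG}$, and one must show that it is a current of order zero genuinely controlled by the leaf measures $m^u$. This is precisely where the wavefront condition $\mathrm{WF}(u)\subset E_u^*$, the local product structure around each point, and the minimality in Assumption \ref{assumption} (through the Hölder-continuous Jacobian and the full support of $m^u$) must be combined, as in \cite[Section 3]{Hum}; relative to \cite{GBGHW} the extra bookkeeping comes from the grading \eqref{eq:split2} of $\Lambda^{d_s}(E_u^*\oplus E_s^*)$, but the off-diagonal components cause no trouble since by \eqref{eq:bbbound} the propagator contracts them, relative to the top component, at a definite exponential rate.
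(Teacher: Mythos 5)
Your overall plan---follow \cite[Proposition 3.1]{Hum} and \cite[Lemma 5.2]{GBGHW} with the norm $\|\cdot\|_u$ of Proposition \ref{realnorm} replacing $L^\infty$, use quasi-compactness, and prove power-boundedness on the critical axis---is the right one, and your argument for ``no Jordan block at $1$'' via power-boundedness on the finite-dimensional generalized eigenspace is correct. The genuine gap is in your justification that $1$ is the \emph{only} eigenvalue of $R(\lambda_0)$ on the unit circle. You propose to get this from the weak mixing of $m$, as in Proposition \ref{label mixing}. This is circular: the order-zero property, the Radon--Nikodym bound $\langle\omega,\theta\rangle\lesssim\langle\omega,m^u\rangle$, and the absolute continuity $m^s\wedge\theta\wedge\alpha\ll m$ needed to feed a resonant state into the weak-mixing argument are all established in Propositions \ref{First} and \ref{label mixing}, whose proofs already invoke the formula \eqref{eq:conv} of the present proposition. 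It also proves the wrong thing---that $h_{\mathrm{top}}^{\mathcal W}$ is the only resonance on the critical axis---rather than the spectral statement about $R(\lambda_0)$ at a fixed $\lambda_0$, which must hold at \emph{every} point of the critical axis and is needed before any ergodic input is available.

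The intended argument is elementary and needs no mixing. Since $R(\lambda_0)$ commutes with the $e^{-t\mathbf X_{A_j}}$, the finite-dimensional generalized eigenspace at any $z$ with $|z|>1/2$ splits into joint generalized eigenspaces of the $\mathbf X_{A_j}$. Pick a joint eigenvector $u$ with $-\mathbf X_{A_j}u=\mu(A_j)u$, so $\mu$ is a Ruelle--Taylor resonance; then
\begin{equation*}
R(\lambda_0)u=\Big(\prod_{j=1}^{\kappa}\widehat{\phi_j}\big(\lambda_0(A_j)-\mu(A_j)\big)\Big)u,\qquad \widehat{\phi_j}(w)=\int_0^\infty e^{-wt}\phi_j(t)\,dt.
\end{equation*}
By Lemma \ref{lemm crit}, $\mathrm{Re}\big(\lambda_0(A_j)-\mu(A_j)\big)\geq 0$, and since $\phi_j\geq 0$ has compact support and $\int\phi_j=1$, one has $|\widehat{\phi_j}(w)|\leq\widehat{\phi_j}(\mathrm{Re}\,w)\leq 1$ with equality iff $w=0$. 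Hence $|z|=1$ forces $\mu=\lambda_0$ and $z=1$; this simultaneously identifies eigenvectors of $R(\lambda_0)$ at $1$ with resonant states at $\lambda_0$, proves that $1$ is the unique eigenvalue on the circle, and gives the equivalence ``$\lambda_0$ is a resonance iff $1\in\mathrm{Spec}(R(\lambda_0))$'' without any reference to $m^u$ or to mixing. The limit $R(\lambda_0)^k\to\Pi_0(\lambda_0)$ then needs exactly this uniqueness on the circle (plus the essential spectral radius $\leq 1/2$ and the semisimplicity at $1$), not just the absence of Jordan blocks at $1$, so this step is not cosmetic.
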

\subsection{Resonances on the critical axis}
We show that the first resonance is simple and that the presence of resonances on the critical axis is linked to mixing properties of the action with respect to the measure of maximal entropy. We follow the strategy of \cite[Proposition 3.8]{Hum} which already borrowed important ideas from \cite{Cli}.
\begin{prop}[First resonance]
\label{First}
The first resonance $h_{\mathrm{top}}^{\mathcal W}$ is simple, in other words, the space of resonant states $($resp. co-resonant$)$ is one dimensional.
\begin{equation}
\label{eq:spann}
\begin{split}
&\{\eta \in \mathcal D'(\mathcal M;\Lambda^{d_s}(E_s^*\oplus E_u^*)),\  (-\mathbf{X}-h_{\mathrm{top}}^{\mathcal W})\eta=0, \ \mathrm{WF}(\eta)\subset E_u^*\}=\mathrm{Span}(m^s),
\\
&\{\theta \in \mathcal D'(\mathcal M;\Lambda^{d_u}(E_s^*\oplus E_u^*)),\  (\mathbf{X}-h_{\mathrm{top}}^{\mathcal W})\theta=0, \ \mathrm{WF}(\theta)\subset E_s^*\}=\mathrm{Span}(m^u).
\end{split}
\end{equation}
\end{prop}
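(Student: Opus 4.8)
The plan is to reduce the statement to a fixed-point analysis of the joint propagator $R(\lambda_0)$ at $\lambda_0 = h_{\mathrm{top}}^{\mathcal W}$, exploiting the characterization from Proposition \ref{Prop R} together with the ergodicity of the measure of maximal entropy $m$. First I would record that by Proposition \ref{muv} we already know $m^s$ is a resonant state at $h_{\mathrm{top}}^{\mathcal W}$ (and $m^u$ a co-resonant state), so the nontrivial content is the reverse inclusion: any resonant state at $h_{\mathrm{top}}^{\mathcal W}$ is a multiple of $m^s$. By Proposition \ref{Prop R}, the space of resonant states is $\Pi_0(\lambda_0)(\mathscr E_0^{d_s})$ and equals the $1$-eigenspace of $R(\lambda_0)$ on $\mathcal H^{NG}$, with no Jordan block, and $\Pi_0(\lambda_0) = \lim_k R(\lambda_0)^k$. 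So it suffices to show this $1$-eigenspace is one-dimensional.

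The key step is to analyze the limit \eqref{eq:conv}. Take $\omega \in \mathscr E_0^{d_s}$ and test against $m^u$ (which is a co-resonant state, i.e. $\mathbf{X}_A m^u = h_{\mathrm{top}}^{\mathcal W}(A) m^u$): then $\langle e^{-\sum_j t_j \mathbf{X}_j}\omega, m^u\rangle = e^{-\sum_j t_j h_{\mathrm{top}}^{\mathcal W}(A_j)} m^u(\omega)$ for all $t$, so pairing \eqref{eq:conv} against $m^u$ gives $\langle \Pi_0(\lambda_0)\omega, m^u\rangle = m^u(\omega)$ after the $\phi_j^{*k}$ integrate to $1$. More importantly, I would use the norm $\|\cdot\|_u$ from Proposition \ref{realnorm}: only the $k=0$ component $\omega_0 \in C^0(\mathcal M;\Lambda_0^{d_s})$ survives the limit, because for $k\geq 1$ the bound \eqref{eq:bbbound} forces $\|e^{-\mathbf{X}_A}\omega_k\|_{u,k}$ to decay like $e^{h_{\mathrm{top}}^{\mathcal W}(A) - k\eta(A)}$, so after renormalizing by $e^{-h_{\mathrm{top}}^{\mathcal W}}$ those components are killed as $t\to\infty$. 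Hence $\Pi_0(\lambda_0)\omega$ depends only on $\omega_0$, which we may identify (via the trivialization of $\Lambda_0^{d_s}$, Assumption \ref{assumption}) with a continuous function $f$ on $\mathcal M$. Writing the density $e^{-th_{\mathrm{top}}^{\mathcal W}(A_0)} e^{-t\mathbf{X}_{A_0}}|\omega_0|$ and comparing with $m^u$, the limit becomes a Birkhoff-type average of $f$ against the conditional measures on center-stable leaves, whose limit by the ergodicity of $m$ (item from \cite[Corollary A, Theorem C]{CarHer}, see Theorem \ref{Theo1}) is the constant $m(f)$ times the invariant density — i.e. $\Pi_0(\lambda_0)\omega = c\cdot m(\omega)\, m^s$ for a fixed constant. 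This shows the range of $\Pi_0(\lambda_0)$ is one-dimensional, spanned by $m^s$, giving the first line of \eqref{eq:spann}; the second line follows by applying the same argument to $-X$ (which, by the remark after Theorem \ref{Theo1}, produces $m^u$ and $m^{cu}$ in the roles of $m^s$ and $m^{cs}$), noting that the co-resonant states for $\mathbf{X}$ are the resonant states for the action of $-\mathbf{X}$ on $d_u$-forms.

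The main obstacle I expect is making the Birkhoff-averaging step rigorous at the level of currents rather than functions: one must check that the weak limit in \eqref{eq:conv}, which a priori lives in $\mathcal H^{NG}$, really is computed by an honest leafwise integral against the leaf measures $m^u$ and the conditional measures of $m$, and that the convergence $\frac1{|\mathcal C_{aN,bN}|}\int \cdots \to m(\cdot)$ can be upgraded from the pointwise ergodic theorem to the test-function pairing needed here — this is exactly where the strong absolute continuity and the Gibbs/product structure from \cite{CarHer} (as quoted in Theorem \ref{Theo1}) enter, together with the wavefront bounds $\mathrm{WF}(m^s)\subset E_u^*$, $\mathrm{WF}(m^u)\subset E_s^*$ which guarantee the distributional product $m^s \wedge m^u$ is well defined. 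A secondary point requiring care is the uniformity of the constants $C(A),\eta(A)$ in \eqref{eq:bbbound} over the relevant directions $A_1,\dots,A_\kappa$, so that the decay of the $k\geq 1$ components is genuinely uniform; this is handled by restricting to a proper subcone of $\mathcal W$ as noted in the footnote to Proposition \ref{realnorm}.
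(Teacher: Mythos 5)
The core of your argument rests on the claim that, after reducing to the $k=0$ component $\omega_0$, the renormalized pushforward $e^{-t h_{\mathrm{top}}^{\mathcal W}(A)} e^{-t\mathbf{X}_A}\omega_0$ (suitably averaged via the $\phi_j^{*k}$) converges weakly to a multiple of $m^s$, with proportionality constant $m^u(\omega_0)$ (note: the expression $m(\omega)$ in your ``$\Pi_0(\lambda_0)\omega = c\,m(\omega)\,m^s$'' should read $m^u(\omega)$, since $m$ pairs against functions, not $d_s$-forms). This is exactly the point that is not justified. Ergodicity of $m$ in the Birkhoff sense concerns almost-sure convergence of time averages of observables against $m$; what you need is a Ruelle--Perron--Frobenius statement: that iterates of the renormalized transfer operator push \emph{any} initial density onto a one-dimensional space. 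That is logically equivalent to the simplicity you are trying to prove, so invoking ``ergodicity'' here is circular. Your own ``main obstacle'' paragraph correctly identifies the location of the gap, but the tools you name (strong absolute continuity, Gibbs property, wavefront bounds) are what make the leaf measures and their products well defined; they do not by themselves upgrade Birkhoff ergodicity into convergence of pushforward densities in the $\mathcal H^{NG}$ or weak-$*$ topology.

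The paper's proof takes a genuinely different, non-circular route. Starting from an \emph{arbitrary} co-resonant state $\theta$, it shows $\theta$ is of order $0$, restricts it to unstable leaves using $\mathrm{WF}(\theta)\subset E_s^*$ to obtain a system of leaf measures $\{\theta_x\}$ satisfying the same scaling as $m^u$, and then applies the Radon--Nikodym argument of Climenhaga (via \cite[Corollary 4.6]{CarHer}): if $\theta_1,\theta_2$ both have full support in each leaf, the density $d(\theta_1)_x/d(\theta_2)_x$ is flow-invariant and continuous, hence constant by ergodicity of $m$ together with full support of $m$ --- this is the place ergodicity is legitimately used, and it acts on a \emph{function}, not on the operator limit. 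The remaining input is the positivity/domination step you omitted entirely: using $\mathrm{Res}_{\mathbf X}^{d_s}(h_{\mathrm{top}}^{\mathcal W}) = \Pi_0^*(C^0(\mathcal M;\Lambda^{d_s}E_u^*))$ (the reduction to $\omega_0$, which you do have), and the bound $|\langle\Pi_0^*(f\omega_0),\eta\rangle|\le \|f\|_\infty\langle\theta_0,|\eta|\rangle$ where $\theta_0=\Pi_0^*(\omega_0)$, one concludes every co-resonant state is absolutely continuous with bounded density against $\theta_0$; since $m^u$ is a co-resonant state with full leaf support, $\theta_0$ must have full support, and then $\theta + K\theta_0$ has full support for $K$ large, producing a basis of co-resonant states with full support, to which the Radon--Nikodym argument applies. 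Without this positivity step the Radon--Nikodym argument cannot be run, and without the Radon--Nikodym argument you are left needing precisely the equidistribution you assumed.
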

\begin{proof}
We proceed in several steps, let us consider $\theta$ a co-resonant state associated to the first resonance $h_{\mathrm{top}}^{\mathcal W}$:
\begin{equation}
\label{eq:theta}
\theta \in \mathcal D'(\mathcal M;\Lambda^{d_u}(E_s^*\oplus E_u^*))\setminus\{0\},\quad  (\mathbf{X}-h_{\mathrm{top}}^{\mathcal W})\theta=0, \ \mathrm{WF}(\theta)\subset E_s^*.
\end{equation}

\textbf{The co-resonant state $\theta$ is of order $0$.}
The proof follows exactly the one from \cite[Proposition 4.5]{Hum} and relies only on the Anosov property \eqref{eq:s}, the description of co-resonant states given by \eqref{eq:conv} and the fact that the stable and unstable foliations are continuous. For our purpose, this means that $\theta$ can be tested against continuous sections.

\textbf{The restriction of $\theta$ on unstable manifolds is well defined.} By the wavefront set condition $\mathrm{WF}(\theta)\subset E_s^*$, one sees that the distributional product of $\theta$ and $[\mathcal W^u(x)]$ is well defined. Here,  $[\mathcal W^u(x)]$ denotes the integration current over the unstable manifold $\mathcal W^u(x)$ for a $x\in \mathcal M$. We define the restrictions of $\theta$ to be
\begin{equation}
\label{eq:thetax}
\theta_x(f):=(f[\mathcal W^u(x)], \theta)_{\mathcal H^{NG}\times \mathcal H^{-NG}}, \ \ x\in \mathcal M,\ f\in C^{\infty}(\mathcal M),
\end{equation}
where the bracket denotes the distributional pairing which coincides with the $\mathcal H^{NG}\times \mathcal H^{-NG}$ pairing for $N$ large enough by \cite[Lemma 2.11]{GBGW}. We see that $\theta_x$ is of order zero (as a product of such distributions) and $\theta_x$ identifies to a measure on $\mathcal W^u(x)$. We now prove that $\{\theta_x \mid x\in \mathcal M\}$ defines a system of leaf measure in the sense of Carrasco and Rodriguez-Hertz or Climenhaga et al.

\textbf{The system of measures $\{\theta_x \mid x\in \mathcal M\}$ satisfies a change of variable formula by the action.} This is a consequence of the eigenvalue equation satisfied by $\theta$. More precisely, for any smooth function $f\in C^{\infty}(\mathcal M)$ and $A\in \mathcal W$, 
 $$( f[\mathcal W^u(x)]), \theta)_{\mathcal H^{NG}\times \mathcal H^{-NG}}=e^{-th_{\mathrm{top}}^{\mathcal W}(A)}(e^{-t\mathbf{X}_A}( f[\mathcal W^u(x)]),\theta)_{\mathcal H^{NG}\times \mathcal H^{-NG}}.$$
Using $e^{-t\mathbf{X}_A}( f[\mathcal W^u(x)])=(e^{-tX_A} f)[\mathcal W^u(\varphi_t^A x)]$, then yields
\begin{equation}
\label{eq:varphit}
\theta_x(f)=e^{-th_{\mathrm{top}}^{\mathcal W}(A)}\theta_{\varphi_t^A x}(f(\varphi_{-t}^Ay))\iff (\varphi_{-t}^A)^*\theta_{\varphi_t^Ax}=e^{th_{\mathrm{top}}^{\mathcal W}(A)}\theta_x. 
\end{equation}

\textbf{Two co-resonant states with full support in each leaf are proportional.} We prove that if $\theta_1$ and $\theta_2$ are two co-resonant states such that $(\theta_{1})_x$ and $(\theta_2)_x$ have full support in each $\mathcal W^u(x)$, then they are proportional. Under the above assumption, we can apply \cite[Corollary 4.6]{CarHer} which states that the conditional measures of the measure of maximal entropy $m$ are equivalent to $(\theta_i)_x$. More precisely, if $\xi$ is a SLY partition, i.e it is subordinated to the partition by unstable manifolds, it is increasing $m$-a.e$(x)$ and $\xi(x)$ contains an open neighborhood of $x$ inside $\mathcal W^u(x)$ for any $x$, then the conditional measures $m^\xi_x$ satisfy the following:
\begin{equation}
\label{eq:Radon}
\frac{dm^{\xi}_x}{d(\theta_i)_x}=\frac{1}{(\theta_i)_x(\xi(x))},\quad i=1,2.
\end{equation}
Note that the fact that $\xi$ is SLY guarantees that the denominator does not vanish. Remark moreover that the density does not depend on $y\in \mathcal W^u(x)$ because we are studying the equilibrium state associated to the null-potential. In particular, we get
$$\frac{d(\theta_1)_x}{d(\theta_2)_x}=\frac{(\theta_1)_x(\xi(x))}{(\theta_1)_x(\xi(x))}:=g(x). $$
Let us prove that the density function $g$ is actually constant, we follow \cite[Corollary 3.12]{Cli}. First of all, notice that for any Borel set $Z\subset \xi(x)$, one has $g(x)=(\theta_1)_x(Z)/(\theta_2)_x(Z). $  This means that the Radon Nikodym derivative is well defined on the whole unstable manifold $\mathcal W^u(x)$, see also \cite[Corollary 4.6]{CarHer}. Fix a Borel set $X\subset \mathcal M$. We use the fact that $x\mapsto (\theta_i)_x(X\cap \mathcal W^u(x))$ is Hölder continuous (see \cite[Appendix]{Hum} for a more precise statement), this gives that $g$ is actually a continuous function. Moreover, it is invariant by $\varphi_t^A$ for any $A\in \mathcal W$. Indeed, using \eqref{eq:varphit}, 
$$g(\varphi_t^Ax)=\frac{(\theta_1)_{\varphi_t^Ax}(\varphi_t ^AZ)}{(\theta_2)_{\varphi_t^Ax}(\varphi_t ^AZ)}=\frac{\int_Ze^{th_{\mathrm{top}}^{\mathcal W}(A)}d(\theta_1)_x(y)}{\int_Ze^{th_{\mathrm{top}}^{\mathcal W}(A)}d(\theta_2)_x(y)}=g(x). $$
Thus, using the transitivity of the flow with respect to $m$ gives that $g$ is constant $m$-a.e but since $m$ has full support, $g$ is constant. 
We have shown that there exists $c>0$ such that for any $x\in \mathcal W^u(x)$, one has
$(\theta_1)_x=c(\theta_2)_x$. We would like to prove that $\theta_1=c\theta_2$, i.e that a co-resonant state can be reconstructed from its restrictions on unstable manifolds. This was done in \cite[Proposition 3.8, 4.5]{Hum} using a Fubini-like formula for measures with wavefront set in $E_s^*$. The proof is the same in this context and we obtain $\theta_1=c\theta_2$.

\textbf{There exists a basis of co-resonant states with full support in each leaf.}
By the previous discussion, it suffices to show that the (finite-dimensional) space of co-resonant states associated to the first resonance $h_{\mathrm{top}}^{\mathcal W}$ admits a basis $\theta_0,\ldots, \theta_n$ such that each $\theta_i$ has full support in each leave. For this, we use the description of co-resonant states of Proposition \ref{Prop R}.\footnote{The proposition is written for resonant states but we get an analogous result for co-resonant states by changing $X$ to $-X$.} We have proved that the space of co-resonant states $\mathrm{Res}_{\mathbf{X}}^{d_s}(h_{\mathrm{top}}^{\mathcal W})$ is equal to $\Pi_0^*(h_{\mathrm{top}}^{\mathcal W})(\mathscr E_0^{d_s})$\footnote{We denote by $\Pi_0^*(h_{\mathrm{top}}^{\mathcal W})$ the dual counterpart of $\Pi_0(h_{\mathrm{top}}^{\mathcal W})$.}. Actually, we see from the proof of \cite[Proposition 4.5]{Hum} that $\Pi_0^*(h_{\mathrm{top}}^{\mathcal W})$ extends to continuous sections. Moreover, using the Anosov property, the proof of \cite[Proposition 4.5]{Hum} implies that it is sufficient to consider continuous sections of $\Lambda^{d_s}E_u^*$, in other words
\begin{equation}
\label{eq:span}
\mathrm{Res}_{\mathbf{X}}^{d_s}(h_{\mathrm{top}}^{\mathcal W})=\Pi_0^*(h_{\mathrm{top}}^{\mathcal W})(C^0(\mathcal M; \Lambda^{d_s}E_u^*)).
\end{equation}
The advantage of doing this is that $C^0(\mathcal M; \Lambda^{d_s}E_u^*)$ is a line bundle which is trivial by Assumption \ref{assumption}. Let us consider a positive section $\omega_0\in C^0(\mathcal M; \Lambda^{d_s}E_u^*)$. Then for any $f\in \mathcal C^0(\mathcal M)$ and any continuous section $\eta\in C^0(\mathcal M; \Lambda^{d_u}E_s^*)$, one has
$$
\langle \Pi_0^*(h_{\mathrm{top}}^{\mathcal W})(f\omega_0), \eta\rangle=\lim_{k\to+\infty}\int_{(\mathbb R_+)^{\kappa}} e^{-\sum_{j=1}^\kappa t_jh_j}\prod_{j=1}^\kappa \phi_j^{*k}(t_j)\langle e^{\sum_j t_j X_j}(f\omega_0),\eta\rangle dt_1\ldots dt_\kappa,
 $$
 where $h_j:=h_{\mathrm{top}}^{\mathcal W}(A_j)$. It is then clear that, one has
 $$ |\langle \Pi_0^*(h_{\mathrm{top}}^{\mathcal W})(f\omega_0), \eta\rangle|\leq \|f\|_0\langle \Pi_0(h_{\mathrm{top}}^{\mathcal W})^*(\omega_0), |\eta|\rangle.$$ Denote by $\theta_0:= \Pi_0(h_{\mathrm{top}}^{\mathcal W})^*(\omega_0)$, we have showed that
 \begin{equation}
 \label{eq:Radon2}
 \forall \theta \in  \mathrm{Res}_{\mathbf{X}}^{d_u,*}(h_{\mathrm{top}}^{\mathcal W}),\ \exists C(\theta)>0,\ \forall \eta \in C^0(\mathcal M; \Lambda^{d_s}E_s^*), \quad |\langle \theta, \eta \rangle|\leq C\langle \theta_0, |\eta|\rangle.
 \end{equation}
We see that if $\theta_0$ did not have full support in each leaf, then it would be the case of all co-resonant states. However, we know from Proposition \ref{muv} that $m^u$ is a co-resonant state and it has full support in each leaf by \cite[Theorem A]{CarHer}. In particular, we see that for any $\theta \in \mathrm{Res}_{\mathbf{X}}^{d_s}(h_{\mathrm{top}}^{\mathcal W})$, choosing $K>C(\theta)>0$ we have that $\theta+K\theta_0$ has full support in each leaf. If $\theta_0,\ldots, \theta_n$ was a basis, then it is still the case for $\theta_0,\theta_1+K_1\theta_0,\ldots, \theta_n+K_n\theta_0$ and this provides the desired basis. This concludes the proof of the proposition as the previous step shows that all of these co-resonant states are proportional.
\end{proof}
We finish by relating the presence of extra resonances on the critical axis to mixing properties of the measure $m$ and we follow the structure of the argument of \cite[Proposition 4.6]{Hum}. The weak-mixing property of $m$ was obtained by Carrasco and Rodriguez-Hertz in \cite[Theorem C]{CarHer}. Actually, the two authors proved the stronger Bernoulli property but we will not need to use it here.
\begin{prop}[No other resonance on the critical axis]
\label{label mixing}
Under Assumption \ref{assumption}, there are no other resonances on $\mathcal C=\{\lambda\in \mathfrak a^*\mid \mathrm{Re}(\lambda)=h_{\mathrm{top}}^{\mathcal W}\}.$
\end{prop}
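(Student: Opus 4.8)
The plan is to argue by contradiction, following the structure of \cite[Proposition 4.6]{Hum}: a resonance on the critical axis different from $h_{\mathrm{top}}^{\mathcal W}$ would produce a non-trivial joint eigenfunction of the Koopman representation of the action on $L^2(m)$, which is ruled out by the weak-mixing of $m$ coming from the Bernoulli property in \cite[Theorem C]{CarHer}. So I would start by assuming that $\lambda_0\in\mathcal C$ is a Ruelle--Taylor resonance with $\lambda_0\neq h_{\mathrm{top}}^{\mathcal W}$ and writing $\lambda_0=h_{\mathrm{top}}^{\mathcal W}+i\xi$ with $\xi\in\mathfrak a^*\setminus\{0\}$ real, which is exactly the content of $\mathrm{Re}(\lambda_0)=h_{\mathrm{top}}^{\mathcal W}$. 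Picking a co-resonant state $\theta$ at $\lambda_0$, the first steps of the proof of Proposition \ref{First} go through verbatim (they only use the Anosov property \eqref{eq:s}, the description \eqref{eq:conv} and the continuity of the foliations): $\theta$ is of order $0$, its restrictions $\theta_x$ to the unstable leaves $\mathcal W^u(x)$ are well-defined complex Radon measures, and they satisfy the change-of-variable formula $(\varphi_{-t}^A)^*\theta_{\varphi_t^Ax}=e^{t\lambda_0(A)}\theta_x$ for all $A\in\mathcal W$.

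The crucial ingredient I would then establish is a domination estimate $|\theta|\le C\,m^u$ on each unstable leaf. As in \eqref{eq:span}--\eqref{eq:Radon2}, Proposition \ref{Prop R} (with $X$ replaced by $-X$) gives $\mathrm{Res}_{\mathbf X}^{d_s}(\lambda_0)=\Pi_0^*(\lambda_0)\big(C^0(\mathcal M;\Lambda^{d_s}E_u^*)\big)$, and $C^0(\mathcal M;\Lambda^{d_s}E_u^*)$ is a trivial line bundle by Assumption \ref{assumption}, so I may write $\theta=\Pi_0^*(\lambda_0)(f\omega_0)$ for a fixed positive section $\omega_0$ and some $f\in C^0(\mathcal M)$. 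Plugging into \eqref{eq:conv} and taking absolute values, the modulus of the integrand is $|e^{-\sum_j t_j\lambda_0(A_j)}|=e^{-\sum_j t_j h_{\mathrm{top}}^{\mathcal W}(A_j)}$ \emph{precisely because} $\mathrm{Re}(\lambda_0)=h_{\mathrm{top}}^{\mathcal W}$, i.e.\ it is identical to the one governing the resonance $h_{\mathrm{top}}^{\mathcal W}$; hence $|\langle\theta,\eta\rangle|\le\|f\|_0\,\langle\Pi_0^*(h_{\mathrm{top}}^{\mathcal W})(\omega_0),|\eta|\rangle$ for every continuous section $\eta$. Since $\theta_0:=\Pi_0^*(h_{\mathrm{top}}^{\mathcal W})(\omega_0)$ is a full-support co-resonant state at $h_{\mathrm{top}}^{\mathcal W}$, the ``two full-support co-resonant states are proportional'' step of Proposition \ref{First} forces $\theta_0=c_0m^u$ with $c_0>0$. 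Therefore $|\theta|\le C\,m^u$ as measures, so $\theta_x\ll m^u_x$ with a uniformly bounded density, and these densities glue into a bounded measurable function $\psi$ on $\mathcal M$ with $\theta_x=\psi|_{\mathcal W^u(x)}\,m^u_x$.

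Comparing the change-of-variable formula for $\theta_x$ with the one for $m^u_x$ from \eqref{eq:phit}, I would read off that for all $A\in\mathcal W$ and $t\ge 0$,
\[
\psi\circ\varphi_t^A=e^{t(\lambda_0(A)-h_{\mathrm{top}}^{\mathcal W}(A))}\psi=e^{it\xi(A)}\psi ,
\]
so $\psi$ is a joint eigenfunction of the Koopman representation of the action with purely imaginary character $i\xi$. Next I would check that $\psi\not\equiv 0$ in $L^2(m)$ (recall $\psi\in L^\infty(m)\subset L^2(m)$ since $m$ is a probability measure): if $\psi$ vanished $m$-a.e., then using the local product structure $m\sim m^u_x\times m^{cs}_x$ all restrictions $\theta_x$ would vanish on an $m$-dense set of $x$, hence for every $x$ by the H\"older continuity of $x\mapsto\theta_x$ (\cite[Appendix]{Hum}), and then the Fubini-type reconstruction of a co-resonant state from its leaf restrictions (\cite[Propositions 3.8, 4.5]{Hum}, as used at the end of the proof of Proposition \ref{First}) would give $\theta=0$, a contradiction. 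Thus $\psi\in L^2(m)\setminus\{0\}$ is a non-constant eigenfunction with eigenvalue $e^{it\xi(A)}$, $\xi\neq0$, contradicting the weak-mixing of $m$ implied by \cite[Theorem C]{CarHer}. Hence the only resonance on $\mathcal C$ is $h_{\mathrm{top}}^{\mathcal W}$.

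I expect the main obstacle to be the domination estimate $|\theta|\le C\,m^u$: one must verify that the uniform $L^1$-type control underlying \eqref{eq:Radon2} genuinely persists for co-resonant states at \emph{every} resonance on the critical axis, which is exactly where the hypothesis $\mathrm{Re}(\lambda_0)=h_{\mathrm{top}}^{\mathcal W}$ is used, and then to combine this with the reconstruction of $\theta$ from its unstable leaf restrictions so as to land in $L^2(m)$ where weak mixing can be invoked. The rest is a faithful transcription of the corresponding steps in the proofs of Propositions \ref{First} and \ref{Prop R}.
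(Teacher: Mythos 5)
Your proposal is correct and takes essentially the same route as the paper. The paper passes directly to the global density $h$ of $m^s\wedge\theta\wedge\alpha$ with respect to $m$, using the $L^1$-type domination inherited from the proof of Proposition~\ref{First}, and applies the Koopman/weak-mixing criterion of \cite[Theorem VII.14]{ReSi}; your version obtains the same density by first bounding $\theta_x$ by $m^u_x$ leaf by leaf via \eqref{eq:conv} (correctly noting that $|e^{-\sum t_j\lambda_0(A_j)}|=e^{-\sum t_j h_{\mathrm{top}}^{\mathcal W}(A_j)}$ exactly because $\mathrm{Re}(\lambda_0)=h_{\mathrm{top}}^{\mathcal W}$) and gluing, which is the same computation in leafwise form. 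If anything you spell out slightly more carefully two points the paper compresses: that the domination \eqref{eq:Radon2} persists for \emph{every} $\lambda_0$ on the critical axis, and that the resulting density is nonzero in $L^2(m)$.
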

\begin{proof}
Consider for a contradiction a resonance $\mu \in \mathcal C$ and an associated co-resonant state $\theta$, recall from the proof of Proposition \ref{First},  $\theta$ is of order $0$ and that  there is a $C>0$ such that 
$$\forall  \omega\in C^{\infty}(\mathcal M; \Lambda^{d_s}(E_u^*\oplus E_s^*)), \quad \langle\omega, \theta\rangle \leq C|\langle \omega, m^u\rangle|, $$
where we recall that $\langle\omega, \theta\rangle=\int_{\mathcal M}\omega \wedge \alpha \wedge \theta$ and the pairing is meant distributionally.
Using an approximation argument, this gives:
\begin{equation}
\label{eq:RadonNykodym}
\forall f\in C^{\infty}(\mathcal M),\ |\langle f m^s,\theta\rangle|\leq C\langle |f| m^s,m^u\rangle=Cm(|f|).
\end{equation}
 In other words, we have $m^s\wedge \theta \wedge \alpha\ll m$ (where $m$ is the unique measure of maximal entropy constructed in Theorem \ref{Theo1}) with bounded density $h\in L^{\infty}(\mathcal M, m)$. 

We use \cite[Theorem VII.14]{ReSi} to see that the flow $\varphi_1^{A}$ for $A\in \mathcal W$ is weakly mixing with respect to $m$ if and only if the only $L^2$ eigenvalue of $X_A$ is $1$ and it is a simple eigenvalue. In other words, if the system
\begin{equation}
\label{eq:thrid}
\begin{cases}
X_Af=i\lambda f
\\ f\in L^2(\mathcal M,m)
\end{cases}
\end{equation}
has no nontrivial solution except for $\lambda=0$ and $f$ constant. Since $\theta$ is a co-resonant state for $\mu\in \mathcal C$, we see that the density $h$ satisfies $X_Ah=\mathrm{Im}(\mu(A))h$. Moreover, since $L^{\infty}(\mathcal M,m)\subset L^2(\mathcal M,m)$ by compactness of $\mathcal M$, this means that $h$  is a solution \eqref{eq:thrid} and this implies that $h$ is constant as well as $\lambda=0$. But this implies that $\theta$ is a co-resonant state at the first resonance which contradicts Proposition \ref{First}.
\end{proof}
\section{Bowen-type formula}
\label{Bowen type formula}
In this section, we prove Theorem \ref{theoBowen} and Corollary \ref{corr}.
This section will follow closely the arguments of \cite[Section 4]{GBGW}. We will focus on the parts of the proof that need adaptation to our setting and refer to \cite{GBGW} for the details. 

Recall that a point $x\in \mathcal M$ is called periodic if there is a $A\in \mathfrak a$ such that $\tau(A)x=x$. If $A\in \mathcal W$, it is known that $T_x:=\{\tau(A')x\mid A'\in \mathfrak a\}\subset \mathcal M$ is a $\kappa$-dimensional torus (see \cite[Lemma 4.1]{GBGW}). The set of all periodic torii is denoted by $\mathcal T$ and for $T\in \mathcal T$, the associated lattice is denoted by  $L(T):=\{A'\in \mathfrak a\mid \tau(A')x=x\}\subset \mathfrak a$. The map $\varphi_1^A$ is transversally hyperbolic to $T$ and we define the Poincaré map to be $\mathcal P_A(x):=d_x(\varphi_{-1}^A)|_{E_u(x)\oplus E_s(x)}$. As a consequence, $\mathrm{det}(\mathrm{Id}-\mathcal P_A)$ does not vanish and its value on $T$ does not depend on which $x\in T$ we choose to compute it.

The invariant torus $T_x:=\tau(\mathbb A (x))\cong \mathbb A/L(T)$ is equipped with a natural measure obtained by pushing-forward the Haar measure. This measure will be denoted by $\lambda_T$ and it thus makes sense to integrate a function $f\in C^0(\mathcal M)$ on the torus $T_x$. 

The argument of \cite{GBGW} (already present in \cite{DyaZw} for the rank one case) consists in taking the \emph{flat trace} of a shifted resolvent of the flow. Then, using Guillemin's trace formula, it can be expressed using the periodic orbits of the Anosov action. In our setting, no natural notion of "joint resolvent" exists so applying the strategy above is not immediate. However, as already noticed in \cite{GBGW}, the joint resolvent can be replaced by a "joint propagator" $R$ defined in \eqref{eq:R}. Recall that the joint propagator $R$ was already used crucially in the proof of Proposition \ref{First}, which established the fine study of resonances on the critical axis and is closely related to the resonant states at the first resonance $h_{\mathrm{top}}^{\mathcal W}$. Recall also that $\mathcal W$ is the positive Weyl chamber of a transversely hyperbolic element $A_0\in \mathfrak a$.
\begin{defi}
Let $\psi\in C_c^{\infty}(\mathcal W)$ be such that $\int_{\mathcal W}\psi(A)dA=1$. For any  $0\leq m \leq n-\kappa$, $\lambda\in \mathfrak a^*$, $f\in C_c^{\infty}(\mathcal M)$ and $s\in \mathbb  C$, we define
\begin{equation}
\label{eq:Laplace}
R_{\psi,m}(\lambda):=\int_{\mathcal W}e^{-\mathbf{X}_A-\lambda(A)}|_{\mathscr E_0^m}\psi(A)dA,
\end{equation}
as well as
\begin{equation}
\label{eq:Shift}
T^\lambda_{\psi,f,m}(s):=fR_{\psi,m}(\lambda)(R_{\psi,m}(\lambda)-s)^{-1}.
\end{equation}
\end{defi}
The operator $R_{\psi,m}$ is a joint propagator\footnote{We already defined a joint propagator of this form in \eqref{eq:R} but we recall the definition here since it is slightly more general.} of the action and $T^\lambda_{\psi,f,m}$ will play the role of the shifted resolvent.

\textbf{Step 1: Guillemin trace formula.}
The first step consists in relating a suitable notion of trace of $R_{\psi,m}$ to the periodic orbits of the action. The {flat trace} is an extension of the usual trace to a subclass of distributions obeying some wave-front set condition, see \cite[Theorem 8.2.4]{Hor} for a precise statement. The wavefront set condition can be checked on $R_{\psi,m}$ using source and sink estimates and this is done in the proof of \cite[Proposition 4.2]{GBGW}. As already noticed there, their argument extends directly to smooth vector bundles and we will apply it to the bundles $\mathscr E_0^m$. 
\begin{prop}[Trace of the shifted resolvent]
\label{Guillemin}
Under Assumption \ref{assumption} and with the notations of the previous definition, the flat trace of the shifted resolvent 
\begin{equation}
\label{eq:trb}
Z_{\psi,f,m}(s,\lambda):=\tr^{\flat}(T^\lambda_{\psi,f,m}(s)),
\end{equation}
is well defined for $\lambda \in \mathfrak a_{\mathbb C}^*$ with $\mathrm{Re}(\lambda)$ large enough and $s\in B_{\mathbb C}(1,1/2)$. Moreover, it admits a meromorphic extension to $ B_{\mathbb C}(1,1/2)\times \mathfrak a_{\mathbb C}^*$ with the following expansion :
\begin{equation}
\label{eq:expan}
Z_{\psi,f,m}(s,\lambda)=\sum_{k=1}^{+\infty}s^{-k}\sum_{T\in \mathcal T}\sum_{A\in \mathcal W\cap L(T)}\left(\int_T fd\lambda_T\right)\frac{\tr(\Lambda^m \mathcal P_A)e^{-\lambda(A)}\psi^{*k}(A)}{|\mathrm{det}(\mathrm{Id}-\mathcal P_A)|}.
\end{equation}
Here, $\psi^{*k}$ denotes the $k$-th convolution product of $\psi$. Finally, if one replaces $\psi$ with $\psi_\sigma:=\psi(\cdot{-}\sigma)$, then $Z_{\psi,f,m}$ depends continuously\footnote{The topology here is given by uniform convergence on compact subsets of the holomorphic region in $\mathfrak a_{\mathbb C}^*\times B_{\mathbb C}(1,1/2)$.}  on $\sigma$ in a small neighborhood of $0$.
\end{prop}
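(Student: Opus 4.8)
The plan is to adapt the scalar argument of \cite[Proposition 4.2]{GBGW} (case $m=0$): expand the shifted resolvent in a geometric series in $s^{-1}$, compute the flat trace of each term by Guillemin's trace formula, and propagate this to a meromorphic identity using analytic Fredholm theory on the anisotropic spaces $\mathcal H^{NG}$.

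First I would set up the expansion. By the mapping properties of the propagator on $\mathcal H^{NG}$ from \cite{GBGHW}, each $e^{-\mathbf X_A}|_{\mathscr E_0^m}$ is bounded on $\mathcal H^{NG}$ with at most exponential growth in $\|A\|$, so $R_{\psi,m}(\lambda)=\int_{\mathcal W}e^{-\mathbf X_A-\lambda(A)}|_{\mathscr E_0^m}\psi(A)\,dA$ is bounded on $\mathcal H^{NG}$ with operator norm tending to $0$ as $\mathrm{Re}(\lambda(A))\to+\infty$ uniformly over $\mathrm{supp}(\psi)$. Hence for $\mathrm{Re}(\lambda)$ large and every $s\in B_{\mathbb C}(1,1/2)$ the resolvent $(R_{\psi,m}(\lambda)-s)^{-1}$ exists and, expanding in powers of $s^{-1}$,
\[
T^\lambda_{\psi,f,m}(s)=fR_{\psi,m}(\lambda)(R_{\psi,m}(\lambda)-s)^{-1}=\sum_{k\geq1}s^{-k}\,f\,R_{\psi,m}(\lambda)^k,
\]
the series converging in operator norm. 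Since $\mathcal W$ is a convex cone, a change of variables yields $R_{\psi,m}(\lambda)^k=\int_{\mathcal W}e^{-\mathbf X_A-\lambda(A)}|_{\mathscr E_0^m}\,\psi^{*k}(A)\,dA$ with $\psi^{*k}\in C_c^\infty(\mathcal W)$. Each $fR_{\psi,m}(\lambda)^k$ then has a well-defined flat trace: the source/sink (radial) estimates of \cite[Proposition 4.2]{GBGW}, which use only the transverse hyperbolicity \eqref{eq:s} and extend verbatim to the smooth bundle $\mathscr E_0^m$, show that its Schwartz kernel has wavefront set disjoint from the conormal of the diagonal, so $\tr^\flat$ applies by \cite[Theorem 8.2.4]{Hor}. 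Guillemin's multi-time trace formula for the commuting family $\{\varphi_t^A\}$ acting on $\mathscr E_0^m$, applied with test function $A\mapsto e^{-\lambda(A)}\psi^{*k}(A)$ and using that the fixed set of $\varphi_1^A$ transverse to $E_0$ is the union of the periodic torii with transverse derivative $\mathcal P_A$ (and $\mathrm{det}(\mathrm{Id}-\mathcal P_A)\neq0$), gives
\[
\tr^\flat\big(fR_{\psi,m}(\lambda)^k\big)=\sum_{T\in\mathcal T}\sum_{A\in\mathcal W\cap L(T)}\Big(\int_T f\,d\lambda_T\Big)\frac{\tr(\Lambda^m\mathcal P_A)\,e^{-\lambda(A)}\,\psi^{*k}(A)}{|\mathrm{det}(\mathrm{Id}-\mathcal P_A)|},
\]
a finite sum for each $k$ since $\psi^{*k}$ is compactly supported and there are only finitely many periodic torii $T$ with $L(T)$ meeting a given compact subset of $\mathfrak a$ (transverse hyperbolicity and compactness of $\mathcal M$). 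Summing over $k$ yields \eqref{eq:expan} for $\mathrm{Re}(\lambda)$ large.

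Next I would obtain the meromorphic continuation. The family $\lambda\mapsto R_{\psi,m}(\lambda)$ is holomorphic with values in bounded operators on $\mathcal H^{NG}$ of essential spectral radius $<1/2$ — precisely the input of the parametrix construction of \cite{GBGHW} — and $B_{\mathbb C}(1,1/2)\subset\{|s|>1/2\}$, so by the analytic Fredholm theorem $(s,\lambda)\mapsto(R_{\psi,m}(\lambda)-s)^{-1}$ is meromorphic on $B_{\mathbb C}(1,1/2)\times\mathfrak a_{\mathbb C}^*$ with poles of finite rank. Truncating the series,
\[
fR_{\psi,m}(\lambda)(R_{\psi,m}(\lambda)-s)^{-1}=\sum_{k=1}^{N}s^{-k}fR_{\psi,m}(\lambda)^k+s^{-N}fR_{\psi,m}(\lambda)^{N+1}(R_{\psi,m}(\lambda)-s)^{-1},
\]
every term carries at least one factor $R_{\psi,m}(\lambda)$, so the radial estimates again place all these kernels' wavefront sets in a fixed conic set missing the conormal of the diagonal, locally uniformly in $\lambda$; hence their flat traces are meromorphic in $(s,\lambda)$ and so is $Z_{\psi,f,m}$. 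Since it coincides with the right side of \eqref{eq:expan} for $\mathrm{Re}(\lambda)$ large, \eqref{eq:expan} — understood as the Laurent expansion in $s^{-1}$, convergent for $|s|>\rho(R_{\psi,m}(\lambda))$ — holds wherever both sides are defined. For the shift, $\mathrm{supp}(\psi_\sigma)\subset\mathcal W$ for $\sigma$ small and $R_{\psi_\sigma,m}(\lambda)=e^{-\lambda(\sigma)}e^{-\mathbf X_\sigma}R_{\psi,m}(\lambda)$ depends continuously on $\sigma$ as a bounded operator on $\mathcal H^{NG}$, while on the orbit side $\psi^{*k}$ is replaced by $\psi^{*k}(\cdot-k\sigma)$; feeding this into the previous steps gives continuity of $Z_{\psi_\sigma,f,m}$ in $\sigma$, uniformly on compact subsets of the holomorphic region.

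The step I expect to be the main obstacle is the wavefront-set bookkeeping: verifying that the Schwartz kernels of $R_{\psi,m}(\lambda)^{N+1}(R_{\psi,m}(\lambda)-s)^{-1}$ avoid the conormal of the diagonal with constants locally uniform in $(s,\lambda)$ — this is needed both for $\tr^\flat$ to be defined off the region $\mathrm{Re}(\lambda)\gg1$ and for its meromorphy. It amounts to checking that the parameter-uniform radial-estimate argument of \cite[Proposition 4.2]{GBGW} survives on the bundles $\mathscr E_0^m$; once that is in place, the rest is routine.
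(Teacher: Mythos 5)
Your proposal takes essentially the same route as the paper: expand the shifted resolvent as a (formal) geometric series, apply Guillemin's trace formula — which extends to the bundle $\mathscr E_0^m$ — to each $fR_{\psi,m}(\lambda)^k$ using $R_{\psi,m}(\lambda)^k=\int_{\mathcal W}e^{-\mathbf X_A-\lambda(A)}\psi^{*k}(A)\,dA$, then meromorphically continue via Fredholm theory for the joint propagator on $\mathcal H^{NG}$. Where the paper simply points to the approximation argument of \cite[Lemma 4.9]{GBGW} to justify the passage from the formal identity to \eqref{eq:expan}, you spell out a truncation of the Neumann series, which is the same underlying idea; you are also right that the nontrivial technical point is the wavefront-set control for the remainder $fR_{\psi,m}(\lambda)^{N+1}(R_{\psi,m}(\lambda)-s)^{-1}$, locally uniformly in $(s,\lambda)$, and this is exactly what is verified in \cite{GBGW} using the radial/source-sink estimates, which indeed carry over to $\mathscr E_0^m$.

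One small point on the last step: continuity of $\sigma\mapsto R_{\psi_\sigma,m}(\lambda)$ in operator norm should be derived directly from $\|\psi_\sigma-\psi\|_{L^1}\to 0$ in the integral formula \eqref{eq:Laplace} (combined with the uniform operator-norm bound for $e^{-\mathbf X_A}$ over the compact $\mathrm{supp}(\psi)\cup\mathrm{supp}(\psi_\sigma)$), rather than from the factorization $R_{\psi_\sigma,m}(\lambda)=e^{-\lambda(\sigma)}e^{-\mathbf X_\sigma}R_{\psi,m}(\lambda)$, since $\sigma\mapsto e^{-\mathbf X_\sigma}$ is only strongly, not norm, continuous on $\mathcal H^{NG}$. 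With that adjustment the argument is complete.
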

\begin{proof}
This can be seen as the combination of \cite[Propositions 4.4 and 4.6]{GBGW}. The first proposition, which is Guillemin's trace formula, extends without further work to our setting as already noticed in the paper and reads as follows. Let $\mathcal C \subset \mathcal W$ be a closed cone, then there is $C>0$ such that for any $h\in C^0(\mathcal M\times \mathcal W)$ with support in $\mathcal M\times \mathcal C$ such that $\mathrm{sup}_{x\in \mathcal M,A\in \mathcal C}e^{C|A|}|h(x,A)|<+\infty$ and for any $0\leq m \leq n- \kappa$, one has (see \cite[Equation (4.2)]{GBGW})
\begin{equation}
\label{eq:Guillemin}
\tr^{\flat}\left(\int_{\mathcal W}he^{-\mathbf{X}_A}|_{\mathscr E_0^m}dA\right)=\sum_{T\in \mathcal T}\sum_{A\in \mathcal W\cap L(T)}\frac{\tr(\Lambda^m \mathcal P_A)\int_T h(x,A)d\lambda_T(x)}{|\mathrm{det}(\mathrm{Id}-\mathcal P_A)|}.
\end{equation}
In particular, the proof of \cite[Proposition 4.6]{GBGW} gives for  $\psi\in C_c^{\infty}(\mathcal W)$ with $\int_{\mathcal W}\psi(A)dA=1$ and support contained in a small conic neighborhood $\mathcal C$ of $A_0\in \mathcal W$, for any $f\in C^{\infty}(\mathcal M)$ and any $k\geq 0$,
\begin{equation}
\label{eq:Guillemin2}
\tr^{\flat}\left(fR_{\psi,m}(\lambda)^k\right)=\sum_{T\in \mathcal T}\sum_{A\in \mathcal W\cap L(T)}\left(\int_T fd\lambda_T\right)\frac{ \tr(\Lambda^m \mathcal P_A)e^{-\lambda(A)}\psi^{*k}(A)}{|\mathrm{det}(\mathrm{Id}-\mathcal P_A)|}.
\end{equation}
Recall from Proposition \ref{Prop R} that the joint propagator is bounded on the anisotropic space $\mathcal H^{NG}$. This means that one has the following convergence, in $\mathcal L(\mathcal H^{NG})$:
$$ \forall |s|\gg 1, \quad T^\lambda_{\psi,f,m}(s)=\sum_{k=1}^{+\infty}s^{-k}fR_{\psi,m}(\lambda)^k$$
In particular, we see that the expansion \eqref{eq:expan} follows (at least formally) from applying the flat trace to both sides and using \eqref{eq:Guillemin2}. The equality is proven rigorously using an approximation argument, see \cite[Lemma 4.9]{GBGW}.
\end{proof}
\textbf{Step 2: using the structure on the critical axis.}
The second step consists in relating the left hand side of \eqref{eq:expan} with the measure of maximal entropy $m$. This is done by noticing that the residue at a pole $\lambda_0$ of $Z_{\psi,1,m}$ corresponds to the spectral projector of $R_{\psi,m}$ for the eigenvalue $\lambda_0$. However, by Proposition \ref{Prop R}, this means that $\lambda_0$ is a Ruelle-Taylor resonance and the spectral projector coincides with the projector on the space of resonant states. We will need the following notation. Given $\psi\in C_c^{\infty}(\mathcal W)$ and $\lambda\in \mathfrak a_{\mathbb C}^*$, we define the Laplace transform to be
$$\hat{\psi}(\lambda):=\int_{\mathfrak a}e^{-\lambda(A)}\psi(A)dA. $$
We prove the following result, which is an adaptation of \cite[Proposition 3.10]{GBGW}.
\begin{prop}
\label{prop2}
Let $\psi\in C_c^{\infty}(\mathcal W;\mathbb R_+)$ such that $\int_{\mathcal W}\psi(A)dA=1$. Then there is $\epsilon>0$ such that for any $k\geq 0$
\begin{equation}
\label{eq:O}
\sum_{T\in \mathcal T}\sum_{A\in \mathcal W\cap L(T)}\psi^{*k}(A)e^{-h_{\mathrm{top}}^{\mathcal W}(A)}\frac{\tr(\Lambda^{d_s}\mathcal P_A)\int_T fd\lambda_T}{|\mathrm{det}(\mathrm{Id}-\mathcal P_A)|}=m(f)+O(e^{-\epsilon k}).
\end{equation}
Moreover, if $\psi=\psi_\sigma$, then the remainder is uniform locally in $\sigma$.
\end{prop}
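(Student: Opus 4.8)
The plan is to identify the left-hand side of~\eqref{eq:O} with a flat trace of a power of the joint propagator, use the spectral picture on the critical axis (Propositions~\ref{Prop R} and~\ref{First}) to extract the contribution of the first resonance, and turn the exponential decay of the remainder into a Cauchy estimate for a generating function in an auxiliary variable. Set $R:=R_{\psi,d_s}(h_{\mathrm{top}}^{\mathcal W})$ acting on an anisotropic space $\mathcal H^{NG}$, and write $L_k$ for the left-hand side of~\eqref{eq:O}; Guillemin's trace formula~\eqref{eq:Guillemin2} with $m=d_s$ and $\lambda=h_{\mathrm{top}}^{\mathcal W}$ gives $L_k=\tr^{\flat}(fR^{k})$. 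Proposition~\ref{Prop R} applies to $R$: it has essential spectral radius $\leq 1/2$, finitely many eigenvalues in $\{|z|>1/2\}$, spectral radius $1$, and $1$ is the only eigenvalue on the unit circle, simple and without Jordan block, with spectral projector $\Pi_0$. By Proposition~\ref{First}, $\mathrm{Ran}(\Pi_0)=\mathrm{Span}(m^{s})$ and $\mathrm{Ran}(\Pi_0^{*})=\mathrm{Span}(m^{u})$, so $\Pi_0$ is the rank-one operator $\omega\mapsto\langle m^{s},m^{u}\rangle^{-1}\langle\omega,m^{u}\rangle\,m^{s}$ for the pairing $\langle\phi,\eta\rangle:=\int_{\mathcal M}\phi\wedge\alpha\wedge\eta$ ($\alpha$ as in~\eqref{eq:alpha1}), where $\langle m^{s},m^{u}\rangle=\int_{\mathcal M}m^{s}\wedge\alpha\wedge m^{u}>0$ since this current equals $c_1m$ for some $c_1>0$ by the local product structure of Theorem~\ref{Theo1}. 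Since $f\Pi_0$ is finite rank with kernel having wavefront set in $E_u^{*}\times E_s^{*}$, disjoint from the conormal of the diagonal, its flat trace is the ordinary trace, and $\tr^{\flat}(f\Pi_0)=\langle m^{s},m^{u}\rangle^{-1}\langle fm^{s},m^{u}\rangle=m(f)$.

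Next I would introduce the generating function $P(s):=\sum_{k\geq1}s^{-k}L_k$, which converges for $|s|$ large and there equals $\tr^{\flat}\big(fR(s-R)^{-1}\big)$ (up to sign, the function $Z_{\psi,f,d_s}(\cdot,h_{\mathrm{top}}^{\mathcal W})$ of~\eqref{eq:trb}). The key input is that this flat trace continues to a meromorphic function of $s$ on $\{|s|>1/2\}$ with poles contained in the spectrum of $R$ there: the kernel of $fR(s-R)^{-1}$ has wavefront set in $E_u^{*}\times E_s^{*}$ uniformly for $s$ off the spectrum, so the continuation follows from the same source/sink estimates underlying Proposition~\ref{Guillemin}, together with the approximation argument of~\cite[Lemma~4.9]{GBGW}. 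Picking $\epsilon>0$ with $R$ having no eigenvalue in $\{|z|>1-2\epsilon\}\setminus\{1\}$ (possible: finitely many eigenvalues outside $\overline{B(0,1/2)}$, all but $1$ of modulus $<1$), $P$ is meromorphic in $\{|s|>1-2\epsilon\}$ with a single simple pole at $s=1$, of residue $\tr^{\flat}(fR\Pi_0)=\tr^{\flat}(f\Pi_0)=m(f)$. Hence $G(s):=P(s)-\tfrac{m(f)}{s-1}=\sum_{k\geq1}(L_k-m(f))\,s^{-k}$ is holomorphic on $\{|s|>1-2\epsilon\}$ and vanishes at infinity, and a Cauchy estimate on $|s|=1-\epsilon$ gives $|L_k-m(f)|\leq C(1-\epsilon)^{k}$, i.e.\ $L_k=m(f)+O(e^{-\epsilon' k})$, which is~\eqref{eq:O}. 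When $\psi$ is replaced by $\psi_\sigma=\psi(\cdot-\sigma)$ with $\sigma$ small, the operator $R_{\psi_\sigma,d_s}(h_{\mathrm{top}}^{\mathcal W})$ varies continuously, the resonance $h_{\mathrm{top}}^{\mathcal W}$ and the spectral gap are stable (Proposition~\ref{Prop R} applies uniformly), and $P$ depends continuously on $\sigma$ uniformly on $|s|=1-\epsilon$ by the last part of Proposition~\ref{Guillemin}, so the estimate is uniform in $\sigma$.

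I expect the main obstacle to be the meromorphic-continuation step: the operator-norm decay $\|R^{k}-\Pi_0\|_{\mathcal H^{NG}\to\mathcal H^{NG}}=O(e^{-\epsilon k})$ that follows from Proposition~\ref{Prop R} does \emph{not} by itself control the \emph{flat} trace $\tr^{\flat}(f(R^{k}-\Pi_0))$, since the flat trace is discontinuous for the operator-norm topology. What one genuinely needs is that $s\mapsto\tr^{\flat}(fR(s-R)^{-1})$ is meromorphic on a full annular neighborhood $\{|s|>1-2\epsilon\}$ of the unit circle — not merely on $B_{\mathbb C}(1,1/2)$ as literally stated in Proposition~\ref{Guillemin} — with poles among the eigenvalues of $R$; this is a routine strengthening of Proposition~\ref{Guillemin} (the microlocal estimates are insensitive to $s$ so long as $|s|>1/2$), but it carries the real content of the argument. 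Everything else is bookkeeping together with Theorem~\ref{Theo1} and Propositions~\ref{First} and~\ref{Prop R}.
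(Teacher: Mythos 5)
Your proof follows essentially the same route as the paper's: both identify the left-hand side with $\tr^{\flat}(fR_{\psi,d_s}(h_{\mathrm{top}}^{\mathcal W})^{k})$ via Guillemin's trace formula \eqref{eq:Guillemin2}, use the spectral picture of Propositions~\ref{Prop R} and~\ref{First} to show $Z_{\psi,f,d_s}(\cdot,h_{\mathrm{top}}^{\mathcal W})$ has a single simple pole at $s=1$ in $\{|s|>1-\epsilon'\}$ with residue $\tr(f\Pi_0)=m^u(fm^s)=m(f)$, and extract exponential decay by a Cauchy estimate on $|s|=1-\epsilon'$, with uniformity in $\sigma$ from the continuity statement in Proposition~\ref{Guillemin}. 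Your observation that Proposition~\ref{Guillemin} as stated only asserts meromorphy on $B_{\mathbb C}(1,1/2)\times\mathfrak a_{\mathbb C}^{*}$ whereas the Cauchy contour $\{|s|=1-\epsilon'\}$ requires meromorphy on a full annular neighborhood of the unit circle is a legitimate point the paper passes over silently; as you say, it follows from the resolvent $(s-R_{\psi,d_s}(\lambda))^{-1}$ being meromorphic on $\{|s|>1/2\}$ together with the $s$-uniform wavefront estimates, so the argument is sound.
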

\begin{proof}
We follow the proof of \cite[Proposition 4.10]{GBGW}. Using \eqref{eq:expan}, the task reduces to estimating the coefficients $c_k$ in the expansion of 
$$Z_{\psi,f,d_s}(s,h_{\mathrm{top}}^{\mathcal W})=\sum_{k\geq 0}c_ks^{-k}.$$
By Cauchy's formula, if we prove that this meromorphic function has a pole at $s_0=1$ with a residue equal to $m(f)$ and no other poles outside of $B(0,1-\epsilon')$ for some $\epsilon'>0$, then one gets 
$$c_k=m(f)+\frac 1{2i\pi}\int_{\partial B(0,1-\epsilon')}Z_{\psi,f,d_s}(s,h_{\mathrm{top}}^{\mathcal W})s^{k-1}ds=m(f)+O(e^{-\epsilon k}) $$
for some $\epsilon>0$. Note also that the remainder would be uniform is $\sigma$ using Proposition \ref{Guillemin} which shows that $Z_{\psi_{\sigma},f,d_s}$ depends continuously in $\sigma$ in a neighborhood of $0$. 

We first see that the poles of $Z_{\psi,f,d_s}(\cdot,h_{\mathrm{top}}^{\mathcal W})$ are poles of $(s-R_{\psi,d_s}(h_{\mathrm{top}}^{\mathcal W}))^{-1}$. Moreover, near a pole $s_0$, we get the expansion
$$(s-R_{\psi,d_s}(h_{\mathrm{top}}^{\mathcal W}))^{-1}=\sum_{j\geq 0}\frac{(R_{\psi,d_s}(h_{\mathrm{top}}^{\mathcal W})-s_0)^j\Pi(h_{\mathrm{top}}^{\mathcal W},s_0)}{(s-s_0)^j}+h(s), $$
where $h(s)$ is the holomorphic part in $s$, the summation is finite and $\Pi(h_{\mathrm{top}}^{\mathcal W},s_0)$ is the spectral projector of $R_{\psi,d_s}(h_{\mathrm{top}}^{\mathcal W})$ on the eigenspace associated to $s_0$. Since the joint propagator $R_{\psi,d_s}(h_{\mathrm{top}}^{\mathcal W})$ is Fredholm on the anisotropic spaces, the characteristic space $E(s_0)$ associated to $s_0$ is finite-dimensional and it can be further split into joint-eigenspaces of the action as $R_{\psi,d_s}(h_{\mathrm{top}}^{\mathcal W})$ commutes with the action. We thus choose $u\in E(s_0)$ such that $-\mathbf{X}_Au=\lambda_0(A)u$ for all $A\in \mathcal W$. Since we must have $u\in \mathcal H^{NG}$ for some suitable choice of $N,G$, this first implies that $\lambda_0$ is a Ruelle-Taylor resonance. Next, we get
$$R_{\psi,d_s}(h_{\mathrm{top}}^{\mathcal W})u=\int_{\mathcal W}e^{-\mathbf{X}_A-h_{\mathrm{top}}^{\mathcal W}(A)}u\psi(A)dA=\hat \psi(h_{\mathrm{top}}^{\mathcal W}-\lambda_0)u. $$
We can now use Lemma \ref{lemm crit} which gives $h_{\mathrm{top}}^{\mathcal W}-\mathrm{Re}(\lambda_0)\geq 0$ on $\mathcal W$. In particular, one has
$$|\hat \psi(h_{\mathrm{top}}^{\mathcal W}-\lambda_0)|\leq \int_{\mathcal W}e^{\mathrm{Re}(\lambda_0-h_{\mathrm{top}}^{\mathcal W})(A)}\psi(A)dA\leq \int_{\mathcal W}\psi(A)dA=1. $$
We have equality if and only if $e^{(\mathrm{Re}(\lambda_0)-\lambda_0)(A)}=1$ and $e^{(\mathrm{Re}(\lambda_0)-h_{\mathrm{top}}^{\mathcal W})(A)}=1$ on the support of $\psi$, which can only occur if $\lambda_0=h_{\mathrm{top}}^{\mathcal W}$. Since the spectral radius of $R_{\psi,d_s}(h_{\mathrm{top}}^{\mathcal W})$ is equal to $1$ and the spectrum of $R_{\psi,d_s}(h_{\mathrm{top}}^{\mathcal W})$ is discrete outside $B(0,1/2)$ by Proposition \ref{Prop R},  this proves that $s_0=1$ is a leading pole.

Again thanks to Proposition \ref{Prop R}, there is no Jordan block at $s_0=1$ and the spectral projector $\Pi(h_{\mathrm{top}}^{\mathcal W}, s_0)$ is equal to the projector onto the space of resonant states at the first Ruelle-Taylor resonance. We can now use Proposition \ref{First} to compute the residue. Indeed, the spectral projector at the first resonance writes 
\begin{equation}
\label{eq:projj}
\Pi_0(h_{\mathrm{top}}^{\mathcal W})=m^u(\cdot)m^s,
\end{equation}
where the action of $m^u$ is defined in \eqref{eq:muv} and the systems of leaf measures $m^u$ and $m^s$ are normalized so that $m=m^u\wedge \alpha \wedge m^s$ is a probability measure.
In particular, 
\begin{align*}\mathrm{Res}(Z_{\psi,f,d_s}(\cdot, h_{\mathrm{top}}^{\mathcal W}), 1)&=\tr(fR_{\psi,d_s}(h_{\mathrm{top}}^{\mathcal W})\Pi_0(h_{\mathrm{top}}^{\mathcal W}))=\tr(f\Pi_0(h_{\mathrm{top}}^{\mathcal W}))
\\&=m^u(f\times m^s)=m(f).
\end{align*}
This concludes the proof of the proposition.
\end{proof}
\textbf{Step 3: expressing $m$ in terms of periodic orbits.}
We prove the following.
\begin{lemm}
With the notations of Theorem \ref{theoBowen}, one has for any $f\in C^{\infty}(M)$,
\begin{equation}
\label{eq:deduce}
m(f)=\lim_{N\to +\infty}\frac{1}{|\mathcal C_{aN,bN}|}\sum_{T\in \mathcal T}\sum_{A\in \mathcal C_{aN,bN}\cap L(T)}e^{-h_{\mathrm{top}}(\varphi_1^A)}\frac{\tr(\Lambda^{d_s}\mathcal P_A)\int_T fd\lambda_T}{|\mathrm{det}(\mathrm{Id}-\mathcal P_A)|}.
\end{equation}
\end{lemm}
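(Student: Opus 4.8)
The plan is to deduce \eqref{eq:deduce} from Proposition \ref{prop2} by an averaging argument. By linearity of both sides in $f$ it suffices to treat $f\geq 0$: write a general $f$ as $(f+\|f\|_\infty)-\|f\|_\infty$, and note that for a nonnegative constant the asserted identity is a special case of the same identity applied to $\mathbf 1\in C^\infty(\mathcal M)$. Set, for $A\in L(T)$,
\[
g(A):=e^{-h_{\mathrm{top}}^{\mathcal W}(A)}\,\frac{\tr(\Lambda^{d_s}\mathcal P_A)\int_T f\,d\lambda_T}{|\mathrm{det}(\mathrm{Id}-\mathcal P_A)|},
\]
and abbreviate $\sum_A(\cdot):=\sum_{T\in\mathcal T}\sum_{A\in L(T)}(\cdot)$, so that Proposition \ref{prop2} reads $\sum_{A\in\mathcal W}\psi^{*k}(A)\,g(A)=m(f)+O(e^{-\epsilon k})$. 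Using the hyperbolicity of $\mathcal P_A$ and the orientability of the stable and unstable foliations, one checks that $\tr(\Lambda^{d_s}\mathcal P_A)/|\mathrm{det}(\mathrm{Id}-\mathcal P_A)|\to 1$ as $\eta(A)\to\infty$ with $A$ in the proper subcone $\mathcal C$; in particular $g(A)\geq 0$ for $A\in\mathcal C$ with $\eta(A)$ large, which is the only regime that matters below. The idea is to take the convolution power $k=N$ and to integrate over a family of shifts of the bump $\psi$, chosen so that the superposed kernels converge to the normalized indicator of the slab $\mathcal C_{aN,bN}=N\,\mathcal C_{a,b}$.

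Fix an interior point $A_0\in\mathcal W$ and $\psi\in C_c^\infty(\mathcal W;\mathbb R_+)$ with $\int\psi=1$, symmetric about $A_0$ and supported in a ball $B(A_0,\rho)$ with $\rho$ so small that $B(A_0,\rho)+\Omega\subset\mathcal W$, where $\Omega:=\overline{\mathcal C_{a,b}}-A_0$; this is possible since $\mathcal C$ is a proper subcone and $\overline{\mathcal C_{a,b}}$ is compact. For $\sigma\in\mathfrak a$ put $\psi_\sigma:=\psi(\cdot-\sigma)$, so $\psi_\sigma^{*N}=\psi^{*N}(\cdot-N\sigma)$. For $\sigma\in\Omega$ the function $\psi_\sigma$ is an admissible bump ($\mathrm{supp}\,\psi_\sigma\subset\mathcal W$), so Proposition \ref{prop2} applies to $\psi_{\sigma_0}$ for each $\sigma_0\in\Omega$; covering the compact set $\Omega$ by finitely many of the neighbourhoods on which the remainder in Proposition \ref{prop2} is uniform, we obtain a single $\epsilon>0$ with
\[
\sum_{A\in\mathcal W}\psi_\sigma^{*N}(A)\,g(A)=m(f)+O(e^{-\epsilon N})\qquad\text{uniformly for }\sigma\in\Omega .
\]
Integrating this over $\sigma\in\Omega$ and using Fubini gives $\sum_{A\in\mathcal W}g(A)\,K_N(A)=|\mathcal C_{a,b}|\,m(f)+O(e^{-\epsilon N})$, where $K_N(A):=\int_\Omega\psi^{*N}(A-N\sigma)\,d\sigma$.

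The key observation is that $N^\kappa K_N$ is an approximate indicator of the slab. The substitution $u=A-N\sigma$ gives $N^\kappa K_N(A)=\int_{A-N\Omega}\psi^{*N}(u)\,du=\mathbb P\big(A/N-Y/N\in\Omega\big)$, where $Y$ is a random variable of law $\psi^{*N}$. Since $Y$ is the sum of $N$ independent variables of density $\psi$, the variable $Y/N$ has mean $A_0$ (by symmetry of $\psi$) and covariance $O(1/N)$, so Chebyshev's inequality gives $\mathbb P(|Y/N-A_0|>\delta)\leq C/(N\delta^2)$ for every $\delta>0$. Hence, for $N$ large, $N^\kappa K_N(A)\geq 1-C/(N\delta^2)$ whenever $A/N$ lies in the $\delta$-interior of $A_0+\Omega=\overline{\mathcal C_{a,b}}$, and $N^\kappa K_N(A)\leq C/(N\delta^2)$ whenever $A/N$ lies outside its $\delta$-neighbourhood; note that $A/N\in A_0+\Omega$ exactly when $A\in N\,\mathcal C_{a,b}=\mathcal C_{aN,bN}$.

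To finish, fix $\delta>0$ and choose proper subcones and ranges with $N\,\mathcal C^{-}_{a^-,b^-}$ contained in the $\delta N$-interior of $\mathcal C_{aN,bN}$ and $\mathcal C_{aN,bN}$ contained in the $\delta N$-neighbourhood, itself contained in $N\,\mathcal C^{+}_{a^+,b^+}$, with $|\mathcal C^{\pm}_{a^\pm,b^\pm}|\to|\mathcal C_{a,b}|$ as $\delta\to0$. Running the construction of the previous two paragraphs with $\Omega$ built from $\mathcal C^{+}_{a^+,b^+}$, the pointwise bound $\mathbf 1_{\mathcal C_{aN,bN}}(A)\leq(1-C/(N\delta^2))^{-1}N^\kappa K_N(A)$ together with $g\geq 0$ on the relevant (large-$\eta$) region yields $\limsup_N N^{-\kappa}\sum_{A\in\mathcal C_{aN,bN}}g(A)\leq|\mathcal C^{+}_{a^+,b^+}|\,m(f)$; running it with $\Omega$ built from $\mathcal C^{-}_{a^-,b^-}$ and absorbing the error $\tfrac{C}{N\delta^2}\sum_{A\in\mathrm{supp}\,K_N}g(A)$ — which is $O(N^{\kappa-1})$, since $\sum g(A)$ over any bounded cone region dilated by $N$ is $O(N^\kappa)$ by the $\limsup$ estimate applied (for an arbitrary proper subcone and range) to a suitable larger slab — yields $\liminf_N N^{-\kappa}\sum_{A\in\mathcal C_{aN,bN}}g(A)\geq|\mathcal C^{-}_{a^-,b^-}|\,m(f)$. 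Letting $\delta\to0$ and recalling $|\mathcal C_{aN,bN}|=N^\kappa|\mathcal C_{a,b}|$ gives \eqref{eq:deduce}. The only real work is the boundary bookkeeping in this sandwich and the upgrade of the local-in-$\sigma$ uniformity in Proposition \ref{prop2} to uniformity over the compact set $\Omega$; the remaining ingredient, the weak law of large numbers for $\psi^{*N}$, is elementary.
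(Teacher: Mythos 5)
Your proof is correct, and it takes a genuinely different route from the paper's. The paper builds a test function $F_N(t)=\frac1N\sum_{k=1}^N\int_{\mathbb R^{\kappa-1}}\omega(k/N)\psi_{\sigma(\theta)}^{*k}(t)\,q(\theta)\,d\theta$, averaging over the convolution power $k$ (a radial parameter) and a transverse parameter $\theta$ in a slice $\Sigma$ of the cone, and then invokes the $C^0$ scaling estimate $\|F_N-N^{-\kappa}h(\cdot/N)\|_{C^0}=o(N^{-\kappa})$ imported from \cite[Eq. (4.25)]{GBGW}, with $h(t)=t_1^{1-\kappa}\omega(t_1)q(\bar t/t_1)$; the final sandwich is by smooth $q_j,\omega_j$ bracketing $\mathbf 1_U$ and $t_1^{\kappa-1}\mathbf 1_{[a,b]}$. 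You instead fix $k=N$, superpose the shifted kernels $\psi_\sigma^{*N}$ over the full $\kappa$-dimensional shift $\sigma\in\overline{\mathcal C_{a,b}}-A_0$, and replace the $C^0$ scaling lemma by the elementary observation $N^\kappa K_N(A)=\mathbb P(A/N-Y/N\in\Omega)$ plus Chebyshev for the $N$-fold convolution, and then sandwich by slightly smaller and larger slabs. What this buys is a more self-contained argument: the only external input from \cite{GBGW} you need is Guillemin's trace formula, not the auxiliary kernel estimate (4.25); the price is that the boundary error decays only polynomially ($O(N^{-1}/\delta^2)$) rather than $o(1)$ in $C^0$, but since you divide by $N^\kappa$ and take $N\to\infty$ before $\delta\to0$ this is harmless. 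Both arguments rely on the local-in-$\sigma$ uniformity of the remainder in Proposition \ref{prop2}; you make the necessary compactness/covering step explicit, which the paper leaves implicit. Two small points worth spelling out in a final write-up: (i) the positivity $g(A)\ge 0$, used when dropping terms in the sandwich, should be justified for all $A$ in the support of $K_N$ (it holds because orientability forces $\det(\mathcal P_A|_{E_s})$, $\det(\mathcal P_A|_{E_u})>0$, hence $\tr(\Lambda^{d_s}\mathcal P_A)$ and $|\det(\mathrm{Id}-\mathcal P_A)|$ are both positive — the asymptotic $\to 1$ is not needed for positivity); (ii) the a priori bound $\sum_{A\in\mathrm{supp}\,K_N}g(A)=O(N^\kappa)$ should be established first, by running only the $\limsup$ half of the sandwich for an enclosing slab, before it is used to absorb the Chebyshev tail in the $\liminf$ half — you flag this, but the logical order matters.
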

\begin{proof}
We assume without loss of generality that $f\geq 0$. Define a measure $\nu$ on $\mathcal W$ by
$$ \nu= \sum_{T\in \mathcal T}\sum_{A\in \mathcal W\cap L(T)}e^{-h_{\mathrm{top}}(\varphi_1^A)}\frac{\tr(\Lambda^{d_s}\mathcal P_A)\int_T fd\lambda_T}{|\mathrm{det}(\mathrm{Id}-\mathcal P_A)|}\delta_A.$$
We follow the argument of \cite[Proof of Theorem 4]{GBGW} and choose a basis $(A_j)_{1\leq j\leq \kappa}$ of $\mathfrak a$ such that $A_1\in \mathcal W$ and $A_j\in \mathrm{ker}(e_1)$ for $j\geq 2$, where $e_1\in \mathfrak a^*$ satisfies $e_1(A_1)=1$. Let $\Sigma:=\mathcal C\cap\{A_1+\sum_{j=2}^\kappa t_jA_j\mid t_j\in \mathbb R\}$. We consider 
\begin{itemize}
\item an even non-negative function $\psi\in C_c^{\infty}(-r/2,r/2)$ for $r$ smaller than the distance of $\mathcal C$ to the boundary of the Weyl chamber and such that $\int_{\mathbb R}\psi=1$. For any $\sigma\in \mathbb R^{\kappa}$, we will write $\psi_\sigma(t):=\prod_{i=1}^\kappa \psi(t_i-\sigma_i)$.
\item A function $q\in C_c^{\infty}(\Sigma, \mathbb R_+)$ with small support. We write $Q:=\int_{{\mathbb R}^{\kappa-1}}q(\bar t)d\bar t$.
\item A function $\omega \in C_c^{\infty}((0,1);[0,1])$ and we write $W:=\int_0^1w$.
\end{itemize}
For $\theta\in \mathbb R^{\kappa-1}$, we define $\sigma(\theta):=(1,\theta)\in \Sigma$ and consider for an integer $N$, 
\begin{equation}
\label{eq:FN}
F_N(t):=\frac 1 N\sum_{k=1}^N\int_{{\mathbb R}^{\kappa-1}}\omega\big(\frac k N\big)\psi_{\sigma(\theta)}^{*k}(t)q(\theta)d\theta.
\end{equation}
Using \eqref{eq:O} and the uniformity of the estimate in $\sigma$, we compute $\lim_{N\to+\infty}\nu(F_N)$:
\begin{align*}
&\lim_{N\to+\infty}\sum_{T\in \mathcal T}\sum_{A\in \mathcal W\cap L(T)}e^{-h_{\mathrm{top}}^{\mathcal W}(A)}\frac{\tr(\Lambda^{d_s}\mathcal P_A)\int_T fd\lambda_T}{|\mathrm{det}(\mathrm{Id}-\mathcal P_A)|}F_N(A)
\\&=\lim_{N\to+\infty}\frac 1 N \sum_{k=1}^N\int_{{\mathbb R}^{\kappa-1}}\sum_{T\in \mathcal T}\sum_{A\in \mathcal W\cap L(T)}\omega\big(\frac k N\big)\frac{\tr(\Lambda^{d_s}\mathcal P_A)\int_T fd\lambda_T}{|\mathrm{det}(\mathrm{Id}-\mathcal P_A)|}\psi_{\sigma(\theta)}^{*k}(A)e^{-h_{\mathrm{top}}^{\mathcal W}(A)}q(\theta)d\theta
\\&=\lim_{N\to+\infty}\frac 1 N \sum_{k=1}^N\omega\big(\frac k N\big)\int_{{\mathbb R}^{\kappa-1}}\big(m(f)+O(e^{-\epsilon k})\big)q(\theta)d\theta=WQm(f).
\end{align*}
We have thus proven that
\begin{equation}
\label{eq:nuFN}
\lim_{N\to+\infty}\nu(F_N)=WQm(f).
\end{equation}
Following the argument of \cite[Proof of Theorem 4]{GBGW}, we start by defining $h(t):=t_1^{1-\kappa}\omega(t_1)q(\bar{t}/t_1)$, where $t=(t_1,\bar t)$. We will need \cite[Equation (4.25)]{GBGW}:
\begin{equation}
\label{eq:GBGW}
\|F_N(t)-N^{-\kappa}h(t/N)\|_{C^0}=o(N^{-\kappa}).
\end{equation}
If we choose $q\equiv 1$ on an open set $U\subset \Sigma$ and $\omega(t)=t_1^{\kappa-1}$ on $(\epsilon, 1-\epsilon)$, for $N$ large enough,
$$F_N(t)\geq \frac{1}{2N^{\kappa}}\mathbf{1}_{[\epsilon N,(1-\epsilon)N]}(t_1)\mathbf{1}_U(\bar t/t_1)=\frac{1}{2N^{\kappa}}\mathbf{1}_{ \mathcal C_{\epsilon N,(1-\epsilon)N}(U)}, $$
where $\mathcal C_{a,b}(U):=\{t\mid t_1\in [a,b], \ \bar t/t_1\in U\}$. Using \eqref{eq:nuFN} yields
$$\nu( \mathcal C_{\epsilon N,(1-\epsilon)N}(U))\leq 2N^{\kappa}\nu(F_N(t))\leq 3N^{\kappa}QWm(f). $$
From this, we deduce that $\nu(\mathcal C_{0,N}(U))=O(N^{\kappa})$ by letting $\epsilon \to 0$. Using a finite cover by small open sets, we also deduce that $\nu(\mathcal C\cap \{\|A\|\leq N\})=O(N^{\kappa})$.
We now consider general functions $q,\omega$ and remark that there always exists an open set $U'\subset \Sigma$ containing the support of $F_N$ and $h(\cdot/N)$. In particular, using \eqref{eq:GBGW} gives
$$\lim_{N\to+\infty}N^{-\kappa}\nu(h(\cdot/N))=\lim_{N\to+\infty}\big(\nu(F_N)+o(N^{-\kappa})\nu(U')\big)=\lim_{N\to+\infty}\nu(F_N)=WQm(f). $$
To conclude, it suffices to approximate $\mathbf{1}_{ \mathcal C_{\epsilon N,(1-\epsilon)N}(U)}$ by smooth functions. Consider
$$q_j\in C^{\infty}_c(\Sigma), \ j=1,2, \ q_1\leq \mathbf{1}_U\leq q_2, \ \int q_j= |U|+O(\epsilon), $$
as well as
$$\omega_j\in C^{\infty}((0,1);[0,1]), \ j=1,2, \ \omega_1\leq t_1^\kappa \mathbf{1}_{[a,b]}\leq \omega _2, \ \int \omega_j= \int_a^bt_1^\kappa+O(\epsilon). $$
Write $h_j(t):=t_1^{1-\kappa}\omega_j(t_1)q_j(\bar t/t_1)$. We obtain
$$N^{-\kappa}\nu(h_1(\cdot/N))\leq N^{-\kappa}\nu(\mathcal C_{aN,bN}(U))\leq  N^{-\kappa}\nu(h_2(\cdot/N)).$$
Writing $V_{a,b}=\int_a^bt_1^{\kappa-1}dt_1$, we obtain
\begin{align*}
(V_{a,b}-\epsilon)(|U|-\epsilon)m(f)&\leq \liminf_{N\to +\infty}N^{-\kappa}\nu(\mathcal C_{aN,bN}(U))\leq \limsup_{N\to +\infty}N^{-\kappa}\nu(\mathcal C_{aN,bN}(U))
\\&\leq (V_{a,b}+\epsilon)(|U|+\epsilon)m(f).
\end{align*}
Finally, using $|\mathcal C_{aN,bN}(U)|=N^{-\kappa}|U|\times V_{a,b}$, we deduce \eqref{eq:deduce} by letting $\epsilon \to 0$.
\end{proof}
\textbf{Step 4: getting rid of the Poincaré factor.}
The last step consists in deducing \eqref{eq:bowen} from \eqref{eq:deduce}. For this, we first use the orientability assumption of the stable manifold to obtain 
$$ \forall T\in \mathcal T, \ \forall A\in \mathcal W\cap L(T), \quad |\mathrm{det}(\mathrm{Id}-\mathcal P_A)|=(-1)^{d_s}\mathrm{det}(\mathrm{Id}-\mathcal P_A).$$
Next, we use the well-known formula
$$ \mathrm{det}(\mathrm{Id}-\mathcal P_A)=\sum_{m=0}^{n-\kappa}(-1)^m\tr(\Lambda^m\mathcal P_A).$$
To conclude, we show that right hand side in the above equality is equivalent to $(-1)^{d_s}\tr(\Lambda^{d_s}\mathcal P_A)$ when $\|A\|\to +\infty.$ We list the eigenvalues of the Poincare map $\mathcal P_A$ 
\begin{equation}
e^{\lambda_1^-(A)}\leq \ldots \leq e^{\lambda_{d_s}^-(A)}\leq e^{-\eta \|A\|}\leq  e^{\eta \|A\|}\leq e^{\lambda_1^+(A)}\leq \ldots \leq e^{\lambda_{d_u}^+(A)} 
\end{equation}
for some uniform (in $A\in \mathcal C_{aN,bN}$) constant $\eta>0$ given by the Anosov property. The uniformity of the constant comes from the fact that we are working in a proper subcone $\mathcal C$ of the Weyl chamber $\mathcal W$ and that the Anosov constants can only blow up when approaching the boundary of $\mathcal W$. Now we can compute
$$\tr(\Lambda^{k}\mathcal P_{A}):=\sigma_{k}( e^{\lambda_1^-(A)}, \ldots , e^{\lambda_{d_s}^-(A)},e^{\lambda_1^+(A)}, \ldots , e^{\lambda_{d_u}^+(A)})$$
where $\sigma_k$ is the $k$-th symmetric polynomial. We see that the maximum value of $\tr(\Lambda^{k}\mathcal P_{A})$ is attained at $k=d_s$ where one can choose all eigenvalues larger than $1$ without choosing any other eigenvalues. In particular, there is a constant $C>0$, independent of $A\in \mathcal C_{aN,bN}$ and $k\neq d_s$, such that
$$\forall k\neq d_s, \quad |\tr(\Lambda^{k}\mathcal P_{A})|\leq C\tr(\Lambda^{d_s}\mathcal P_{A})e^{-\eta \|A\|}. $$
This means that for any $A\in \mathcal C_{aN,bN}$,
$$\frac{\tr(\Lambda^{d_s}\mathcal P_A)}{|\mathrm{det}(\mathrm{Id}-\mathcal P_A)|}=\frac{1}{(-1)^{d_s}\mathrm{det}(\mathrm{Id}-\mathcal P_A)}\sum_{m=0}^{n-\kappa}(-1)^m\tr(\Lambda^m\mathcal P_A)=1+O(e^{-\eta N a}). $$
Plugging this last estimate into \eqref{eq:deduce} gives \eqref{eq:bowen}.

\textbf{Proof of Corollary \ref{corr}.} 
\begin{proof}
Let $\mathcal C\subset \mathcal W$ be any proper subcone of the Weyl chamber and for $0<a<b$, define $\mathcal C_{a,b}:=\{A\in \mathcal C, \ h_{\mathrm{top}}(\varphi_1^A)/\|h_{\mathrm{top}}^{\mathcal W}\|\in [a,b]\}$. We now consider
$$\mathcal N_{\mathcal C_{a,b}}:=\sum_{T\in \mathcal T}\sum_{A\in L(T)\cap \mathcal C_{a,b}}\mathrm{Vol}(T), \quad \mathcal N^h_{\mathcal C_{a,b}}:=\sum_{T\in \mathcal T}\sum_{A\in L(T)\cap \mathcal C_{a,b}}e^{-h_{\mathrm{top}}(\varphi_1^A)}\mathrm{Vol}(T).$$
For any $q>1$, we use \eqref{eq:bowen} with $f\equiv 1$, $a=1,b=q$ and $N=q^{n-1}$ to obtain
\begin{equation}
\label{eq:exp}
\lim_{n\to+\infty}\frac{\mathcal N^h_{\mathcal C_{q^{n-1},q^n}}}{q^{\kappa(n-1)}}=|\mathcal C_{1,q}|.
\end{equation}
Now, one sees that for any $ \epsilon>0$ and $n\gg 1$, 
$$(|\mathcal C_{1,q}|-\epsilon)q^{\kappa(n-1)}e^{q^{n-1}\|h_{\mathrm{top}}^{\mathcal W}\|} \leq \mathcal N_{\mathcal C_{q^{n-1},q^n}}\leq (|\mathcal C_{1,q}|+\epsilon)q^{\kappa(n-1)}e^{q^{n}\|h_{\mathrm{top}}^{\mathcal W}\|}.$$
Taking the logarithm and using $\mathcal N_{\mathcal C_{0,q^n}} =\mathcal N_{\mathcal C_{0,1}}+\sum_{k=2}^n \mathcal N_{\mathcal C_{q^{k-1},q^k}}$ one gets for $n\gg 1$, 
$$ \|h_{\mathrm{top}}^{\mathcal W}\|/q-\epsilon\leq \frac{\ln(\mathcal N_{\mathcal C_{0,q^n}})}{q^n}\leq \|h_{\mathrm{top}}^{\mathcal W}\|+\epsilon.$$
Since $q$ can be chosen arbitrarily close to $1$, this concludes the proof of the corollary.
\end{proof}

\printbibliography[
heading=bibintoc,
title={References}]

\appendix
\section{Anosov actions for which the measure of maximal entropy and SRB measure do not coincide.}
\label{secappendix}
In this appendix, we show that the construction of Vinhage in \cite{Vin} provides examples of Anosov actions with no rank one factor for which the SRB measure is not equal to the measure of maximal entropy. 

Let us briefly recall the construction. Consider two topologically mixing Anosov flows $\varphi_t:X\to X$ and $\phi_s:Y\to Y$ on two $3$-dimensional closed manifolds $X,Y$. Choose $p_1,p_2\in X$ and $q_1,q_2\in Y$ points on distincts periodic orbits. For $\delta>0$, choose functions $u_1, u_2$ such that
\begin{itemize}
\item $u_1\in C^{\infty}(X)$ and $u_2\in C^{\infty}(Y)$.
\item $|u_1|,|u_2|\leq \delta$.
\item $u_1\equiv \delta$ on the periodic orbit defined by $p_1$ and $u_2\equiv \delta$ on the periodic orbit defined by $q_1$.
\item $u_1\equiv -\delta$ on the periodic orbit defined by $p_2$ and $u_2\equiv -\delta$ on the periodic orbit defined by $q_2$.
\end{itemize}
The functions $u_i$ for $i=1,2$ define cocycles $\theta_i$, that is $\theta_1(t,x)=\int_0^tu_1(\varphi_\tau(x))d\tau$ and $\theta_2(s,y)=\int_0^su_2(\phi_\tau(y))d\tau$. We then define another cocycle by
$$\beta(s,t;x)=(s-\theta_2(t,x_2),t-\theta_1(s,x_1)). $$
Define an action $\alpha :\mathbb R^2\curvearrowright X\times Y$ by
\begin{equation}
\label{eq:alpha}
\alpha(s,t)(x,y)=\big(\varphi_{s-\theta_2(t,y)}(x),\phi_{t-\theta_1(s,x)}(y) \big).
\end{equation}
Then if $\delta>0$ is small enough, \cite[Theorem 5.1]{Vin} shows that $\alpha$ is $C^{\infty}$ Cartan action without rank one factor which is not homogeneous.

 We apply the previous construction to $X=Y=SM$ where $M$ is a closed negatively curved surface which is not hyperbolic. Consider $\phi_t=\varphi_t$ the geodesic flow on $SM$. It is a result of Katok \cite{Ka} that the Liouville measure (which is equal to the SRB measure in this case) is not equal to the measure of maximal entropy (see \cite{Fou,SLVY} for more general statements). From the classification of equilibrium states (see for instance \cite[Theorem 7.3.24]{FishHas}), the unstable jacobian $J^u(x):=-\tfrac{d}{dt}|_{t=0}\mathrm{det}(d(\varphi_t)_{|E_u(x)})$ is not cohomologous to a constant. By the Livsic theorem, this means that there exists two periodic points $v_1,v_2\in SM$ of periods $T_1,T_2>0$ such that
 $$\frac{1}{T_1}\int_{0}^{T_1}J^u(\varphi_t (v_1))dt\neq \frac 1 {T_2}\int_{0}^{T_2}J^u(\varphi_t (v_2))dt. $$
We apply Vinhage's construction to $p_1=q_1=v_1$ and $p_2=q_2=v_2$ and we will denote this action by $\alpha_M$ in the following.
\begin{prop}
The Anosov action $\alpha_M$ is a $C^{\infty}$-Cartan action without rank one factor for which the measure of maximal entropy is not equal to the SRB measure.
\end{prop}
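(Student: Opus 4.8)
Everything in the statement except the inequality $m_{\mathrm{top}}\neq m_{\mathrm{SRB}}$ --- namely that $\alpha_M$ is a $C^{\infty}$ Cartan action with no rank one factor, and that it is not homogeneous --- is precisely \cite[Theorem 5.1]{Vin}, valid once $\delta>0$ is small enough; so the plan only concerns the comparison of the two measures. Here $m_{\mathrm{top}}$ is the measure of maximal entropy of $\alpha_M$ and $m_{\mathrm{SRB}}$ its SRB measure, which I take to be the Carrasco--Rodriguez-Hertz \cite{CarHer} equilibrium state of the geometric potential $V_A:=-\log|\mathrm{det}(d\varphi_1^A|_{E_u})|$ for the maps $\varphi_1^A$, $A\in\mathcal W$. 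The structural fact I rely on is that $\alpha_M$ is a cocycle reparametrization of the product action $\alpha_0:=\varphi\times\varphi$ on $SM\times SM$: for $\delta$ small $(s,t)\mapsto(s-\theta_2(t,y),t-\theta_1(s,x))$ is a diffeomorphism of $\mathbb R^2$, so $\alpha_M$ and $\alpha_0$ have the same orbit foliation, the same neutral bundle $E_0$, and the same weak stable and weak unstable foliations. Since the strong stable and unstable foliations of the geodesic flow of a negatively curved surface are minimal, those of $\alpha_0$ are minimal (a product of dense sets is dense) and hence so are those of $\alpha_M$; after possibly passing to a finite cover to ensure orientability, $\alpha_M$ satisfies Assumption \ref{assumption}. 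Thus Theorem \ref{Theo1} applies, and, exactly as in the proof of Theorem \ref{theoBowen}, both $m_{\mathrm{top}}$ and $m_{\mathrm{SRB}}$ obey a Gibbs estimate on Bowen balls: uniformly in $x$ and $n\ge0$, $m_{\mathrm{top}}(B_n(x,\epsilon))\asymp e^{-nh_{\mathrm{top}}(\varphi_1^A)}$ and $m_{\mathrm{SRB}}(B_n(x,\epsilon))\asymp e^{S_nV_A(x)-nP_A}$, where $S_n$ is the Birkhoff sum along $\varphi_1^A$ and $P_A:=P_{\varphi_1^A}(V_A)$.

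Suppose for contradiction $m:=m_{\mathrm{top}}=m_{\mathrm{SRB}}$. Then $m$ obeys both Gibbs estimates, so $S_nV_A(x)=-nh_{\mathrm{top}}(\varphi_1^A)+nP_A+O(1)$ uniformly in $x$. Evaluating at a point $x_0$ on a periodic torus with $\tau(A)x_0=x_0$ one has $S_nV_A(x_0)=nV_A(x_0)$, so, dividing by $n$ and letting $n\to\infty$, $V_A(x_0)=P_A-h_{\mathrm{top}}^{\mathcal W}(A)$. Writing $c^u(A,y):=\log|\mathrm{det}(d\tau(A)|_{E_u(y)})|$ for the unstable Jacobian cocycle of $\alpha_M$, this says $c^u(A,x_0)=h_{\mathrm{top}}^{\mathcal W}(A)-P_A$ whenever $\tau(A)x_0=x_0$. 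Now $m$ is simultaneously the equilibrium state of $V_A$ and the measure of maximal entropy of $\varphi_1^A$, hence $P_A=h_m(\varphi_1^A)+\int V_A\,dm=h_{\mathrm{top}}^{\mathcal W}(A)+\int V_A\,dm$, so $c^u(A,x_0)=-\int V_A\,dm=:\ell(A)$; and $A\mapsto\ell(A)=\int\log|\mathrm{det}(d\varphi_1^A|_{E_u})|\,dm$ is \emph{linear}, because $V_{A+A'}=V_A\circ\varphi_1^{A'}+V_{A'}$ and $m$ is invariant. Covering $\mathcal W$ by proper subcones (on each of which the Gibbs constants are uniform), the conclusion is: if $m_{\mathrm{top}}=m_{\mathrm{SRB}}$, there is a linear $\ell\in\mathfrak a^*$ with $c^u(A,x_0)=\ell(A)$ for every periodic point $x_0$ and every $A\in L(T)\cap\mathcal W$ in its stabilizer.

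The final step is to violate this on two explicit periodic tori. For $i=1,2$ set $\epsilon_1=+1$, $\epsilon_2=-1$ and consider $x_i:=(v_i,v_i)$; along the $v_i$-orbit $u_1\equiv u_2\equiv\epsilon_i\delta$, so \eqref{eq:alpha} gives $\tau^{\alpha_M}(s,t)(x_i)=\tau^{\alpha_0}\big((s-\epsilon_i\delta t,\,t-\epsilon_i\delta s)\big)(x_i)$. Hence $x_i$ is periodic for $\alpha_M$, its stabilizer $L(T_i)$ is the full-rank lattice $\{(s,t):(s-\epsilon_i\delta t,\,t-\epsilon_i\delta s)\in T_i\mathbb Z\times T_i\mathbb Z\}$ (with $T_i$ the period of $v_i$), and one checks $L(T_i)\cap\mathcal W$ contains two linearly independent vectors. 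A short computation at $x_i$ shows $d\tau^{\alpha_M}(A)$ differs from $d\tau^{\alpha_0}\big((s-\epsilon_i\delta t,\,t-\epsilon_i\delta s)\big)$ only by a map with values in $E_0$: the reparametrization shears the strong unstable bundle, but only inside the weak unstable bundle $E_0\oplus E_u$, which is common to $\alpha_M$ and $\alpha_0$. As the unstable Jacobian depends only on the induced map on $(E_0\oplus E_u)/E_0$ (and $|\mathrm{det}|$ is independent of the chosen volumes), for $A=(s,t)\in L(T_i)$ the product structure of $\alpha_0$ at $(v_i,v_i)$ together with $v_i$ being $T_i$-periodic gives
\[
c^u(A,x_i)=\frac{(1-\epsilon_i\delta)(s+t)}{T_i}\,\log\big|\mathrm{det}(d\varphi_{T_i}|_{E_u^{\varphi}(v_i)})\big|=(1-\epsilon_i\delta)\,\bar L_i\,(s+t),
\]
where $\bar L_i:=\tfrac1{T_i}\log|\mathrm{det}(d\varphi_{T_i}|_{E_u^{\varphi}(v_i)})|>0$. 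Since $\ell$ is linear and equals $c^u(\cdot,x_i)$ on the spanning lattice $L(T_i)\cap\mathcal W$, we must have $\ell(s,t)=(1-\delta)\bar L_1(s+t)=(1+\delta)\bar L_2(s+t)$, i.e. $(1-\delta)\bar L_1=(1+\delta)\bar L_2$. But $\bar L_i=-\tfrac1{T_i}\int_0^{T_i}J^u(\varphi_\tau v_i)\,d\tau$, and $v_1,v_2$ were chosen exactly so that these two orbit averages differ; relabelling so that $\bar L_1<\bar L_2$ gives $(1-\delta)\bar L_1<\bar L_1<\bar L_2<(1+\delta)\bar L_2$, a contradiction. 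Hence $m_{\mathrm{top}}\neq m_{\mathrm{SRB}}$.

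The main obstacle is the step identifying $c^u$ for $\alpha_M$ with the explicit unstable Jacobian of the product $\alpha_0$ on the periodic tori: one must verify carefully that a cocycle reparametrization, while it does shear the strong stable and strong unstable bundles --- even at periodic points --- leaves $E_0$ and the weak unstable bundle $E_0\oplus E_u$ unchanged, so that the unstable Jacobian, being a determinant on $(E_0\oplus E_u)/E_0$, is unaffected. The remaining ingredients --- that $\alpha_M$ meets Assumption \ref{assumption} (minimality, orientability after a cover) so that Theorem \ref{Theo1} and the Gibbs estimates apply, and that the SRB measure of $\alpha_M$ coincides with the Carrasco--Rodriguez-Hertz equilibrium state of its geometric potential --- are routine.
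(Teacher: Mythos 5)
Your proposal is correct and follows essentially the same route as the paper: both exploit that $\alpha_M$ is a time change of the product flow and hence shares its weak unstable bundle, both invoke the Gibbs property on Bowen balls for the MME and the SRB measure (the paper via \cite{CPZ20} and \cite{CarHer}), and both reach a contradiction from the two explicit periodic tori $(v_1,v_1)$ and $(v_2,v_2)$ having different unstable-Jacobian orbit averages. The only organizational difference is that the paper specializes to the single direction $\alpha(1,1)$ and works with a $\delta$-free identity, whereas you derive a linear functional $\ell$ on all of $\mathfrak a^*$ and compare it on the two full-rank lattices $L(T_i)\cap\mathcal W$, arriving at $(1-\delta)\bar L_1=(1+\delta)\bar L_2$; both yield the contradiction once $\delta$ is small relative to $|\bar L_1-\bar L_2|/(\bar L_1+\bar L_2)$, a point worth making explicit since the ``relabelling'' does not immediately rule out that equality for a fixed $\delta$.
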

\begin{proof}
We write $\alpha=\alpha_M$ in the proof.
Using \eqref{eq:alpha} and the definition of $u_1,u_2$, one has
\begin{align*}&\alpha(s,s)(v_1,v_1)= \big(\varphi_{s(1-\delta)}(v_1),\phi_{s(1-\delta )}(v_1) \big)
\\&\alpha(s,s)(v_2,v_2)= \big(\varphi_{s(1+\delta)}(v_2),\phi_{s(1+\delta )}(v_2) \big).
\end{align*}
We deduce that $(v_1,v_1)$ is a periodic point of period $\tfrac{T_1}{1-\delta}$ and $(v_2,v_2)$ is a periodic point of period $\tfrac{T_2}{1+\delta}$. Moreover, we see that $\alpha$ is a time change of the product of the two Anosov flows. Since the weak unstable foliation is invariant under time change this means that $E_{cu}^{\alpha}(v_1,v_1)=E_{cu}^{\varphi}(v_1)\oplus E_{cu}^{\varphi}(v_1)$. In particular, the unstable Jacobian is given by
\begin{align*}J^u_{\alpha(1,1)}(\alpha(s,s)(v_1,v_1))&=-\frac{d}{dt}|_{t=0}\mathrm{det}(d\alpha(s+t,s+t)(v_1,v_1)|_{E_{cu}(\varphi_s(v_1))\oplus E_{cu}(\varphi_s(v_1))})
\\&=2(1-\delta)J^u(\varphi_sv_1), 
\end{align*}
where we used that the center direction was isometric to add it into the definition of the unstable Jacobian. Similarly, 
$$J^u_{\alpha(1,1)}(\alpha(s,s)(v_2,v_2))=2(1+\delta)J^u(\varphi_sv_2). $$ In particular, integrating on the periodic orbits yields
\begin{align*}\frac{1-\delta}{T_1}\int_{0}^{\tfrac{T_1}{1-\delta}}J^u_{\alpha(1,1)}(\alpha(s,s)(v_1,v_1))ds&=\frac{2}{T_1}\int_{0}^{T_1}J^u(\varphi_t (v_1))dt\neq \frac 2 {T_2}\int_{0}^{T_2}J^u(\varphi_t (v_2))dt
\\&= \frac{1+\delta}{T_2}\int_{0}^{\tfrac{T_2}{1+\delta}}J^u_{\alpha(1,1)}(\alpha(s,s)(v_2,v_2))ds.
\end{align*}
In other words, we showed that the unstable Jacobian $J^u_{\alpha(1,1)}$ is not cohomologous to a constant. We can now show that this implies that the SRB measure is not the measure of maximal entropy. We follow the proof of the classification of equilibrium measures (see \cite[Theorem 7.3.24]{FishHas}) and use the Gibbs property. Apply \cite[Theorem 5.1, point 3]{CPZ20} to $\alpha(T_1',T_1')$ where $T_1'=(1-\delta)T_1$ and to $(v_1,v_1)$. Then for $\epsilon>0$, there are constants $a(\epsilon),b(\epsilon)>0$ such that
$$a(\epsilon)\leq \frac{m\big(B_n^{\alpha(T_1',T_1')}((v_1,v_1),\epsilon)\big)}{e^{-nh_{\mathrm{top}}(\alpha(T_1',T_1'))}}\leq b(\epsilon) $$
where $m$ is the measure of maximal entropy and $B_n^{f}(x,\epsilon)$ denotes the Bowen ball for a partially hyperbolic map $f$, see \eqref{eq:BB}. Now, using \cite[Theorem 5.2, point 3]{CPZ20} (the unstable Jacobian satisfies the $u$-Bowen property by \cite[Proof of Theorem 5.2]{CPZ20} and $cs$-Bowen property by \cite[Lemma 3.17]{CarHer}, see \cite[Section 4]{CPZ20} for the precise definitions)  we get constants $a'(\epsilon),b'(\epsilon)>0$ such that
$$a'(\epsilon)\leq \frac{\mu\big(B_n^{\alpha(T_1',T_1')}((v_1,v_1),\epsilon)\big)}{e^{-n\int_0^{1}J^u_{\alpha(T_1',T_1')}[\alpha(T_1's,T_1's)(v_1,v_1)]ds}}\leq b'(\epsilon), $$
where $\mu$ is the SRB measure. If we have $\mu=m$, then combining the previous two Gibbs bound and taking $n\to+\infty$ gives 
$$\int_0^{1}J^u_{\alpha(T_1',T_1')}[\alpha(T_1's,T_1's)(v_1,v_1)]ds= h_{\mathrm{top}}(\alpha(T_1',T_1')).$$
This is equivalent to
$$\frac 1{T_1'}\int_0^{T_1'}J^u_{\alpha(1,1)}(\alpha(s,s)(v_1,v_1))ds=\frac{1}{T_1'}h_{\mathrm{top}}(\alpha(T_1',T_1'))= h_{\mathrm{top}}(\alpha(1,1)).$$
Applying the same argument to $v_2$ gives (we write $T_2'=T_2(1+\delta)$)
$$ \frac 1{T_2'}\int_0^{T_2'}J^u_{\alpha(1,1)}(\alpha(s,s)(v_2,v_2))ds=h_{\mathrm{top}}(\alpha(1,1))=\frac 1{T_1'}\int_0^{T_1'}J^u_{\alpha(1,1)}(\alpha(s,s)(v_1,v_1))ds$$
which is a contradiction. This concludes the proof of the proposition.
\end{proof}

\end{document}